\documentclass{llncs}
\usepackage{amsmath,amsfonts}
\usepackage{url}
\usepackage{algorithmicx}
\usepackage[ruled]{algorithm}
\usepackage{algpseudocode}
\usepackage{algpascal}
\usepackage{algc}
\usepackage{algorithm}
\usepackage{xcolor}
\newcommand{\w}{\mathcal{W}}

\newcommand{\cO}{\mathcal{O}}
\newcommand{\cT}{\mathcal{T}}
\newcommand{\cP}{\mathcal{P}}

\newcommand{\E}{\mathbf{E}}
\newcommand{\M}{\mathbf{M}}
\newcommand{\J}{\mathbf{J}}

\newcommand{\TE}{\mathbf{TE}}

\newcommand{\s}{\mathbf{S}}
\newcommand{\D}{\mathbf{D}}

\newcommand{\ch}{\operatorname{char}\nolimits}

\newcommand{\F}{\mathbb{F}}

\newcommand{\Z}{\mathbb{Z}}

\newcommand{\W}{\mathbf{W}}

\def\F{\mathbb{F}}

\newcommand{\keywords}[1]{{
    \list{}{\advance\topsep by -5ex \relax\small \leftmargin=1cm
      \labelwidth=0pt \listparindent=0pt \itemindent\listparindent
      \rightmargin\leftmargin}\item[\hskip\labelsep \bfseries
    Keywords:] {#1} \endlist}}

\institute{}
\author{ Seyed Gholamhossein Hosseini, Reza Rezaeian Farashahi\inst{1,2} } \institute{
	Department of Mathematical Sciences, Isfahan University of Technology,\\
	Isfahan, 84156-83111, Iran
	\\{\tt  g.hoseinimath@gmail.com,\; farashahi@cc.iut.ac.ir}\\
	\and
	School of Mathematics,
	Institute for Research in Fundamental Sciences (IPM),
	P.O. Box 19395-5746, Tehran, Iran\\
}

\begin{document}

\title{Differential addition on Jacobi quartic Curves}

\maketitle

\begin{abstract}
This paper presents new differential addition and doubling formulas for Jacobi quartic curves. Several  differential addition and doubling formulas are presented with  costs of $5\M+4\s+1\D$ , $3\M+7\s+1\D$ and $3\M+6\s+3\D$ 
when the given difference point is in affine form. Here, $\M, \s, \D$ denote the costs of a field
multiplication, a field squaring and a field multiplication by a constant, respectively. 
\end{abstract}

\keywords{Elliptic curves, Jacobi quartics, Montgomery ladder, Differential addition}


\section{Introduction}
An elliptic curve $E$ over a field $\mathbb{F}$ is traditionally defined by the so called Weiersras\ss\, equation
\[ y^2+a_1xy+a_3y=x^3+a_2x^2+a_4x+a_6\enspace \]
where coefficients $a_1, a_2, a_3, a_4$ and $a_6$ are in $\mathbb{F}$.
The cryptographic application of elliptic curve over finite fields was proposed by Koblitz \cite{K87} and Miler \cite{M86}. 
Since the introduction of elliptic curve cryptography (ECC), elliptic curves over finite fields have been studied intensively and in particular, many proposals have been made to speed up their group arithmetic \cite{HBook,WDC8}. ECC is one of the attractive asymmetric key cryptosystems with the main advantage of achieving smaller key sizes under the same security level compare to that of other existing asymmetric systems such as RSA. This makes ECC suitable for software and hardware implementation in constrained environments including RFID tags, mobiles, sensors, and smart cards. 
 In particular, the group arithmetic of many known forms of elliptic curves such as cubic Weierstrass, Hessian, quartic equations and intersection of two quadratic surfaces~\cite{HBook,CH86} are revisited. In addition, in the last decade variant forms of Edwards curves \cite{BBLP,BerLan1,Edw,HWCD8} are presented. These families of cubic and quartic elliptic curves are extended to cover more isomorphism classes over finite fields, and to provide more flexibility for faster and efficient group arithmetic. \cite{AG,Far1,FMW,FS}

The use of Jacobi curves in cryptology is explained by Chudnovsky and Chudnovsky \cite{CH86}, Liardet and Smart \cite{LS01}, Billet and Joye \cite{BJ}.
Jacobi curves provide resistance to simple side channel attacks due to their efficient unified point addition formulas. Duquesne \cite{D7},  
Hisil et.al. \cite{HWCD9,HWD9}, Pl\^{u}t \cite{P11}, Feng et.al. \cite{FNW} proposed several fast and efficient unified addition and doubling formulas on Jacobi curves and their variants. The complete addition and doubling formulas are given for Jacobi curves with properly chosen parameters. The computation of pairings on Jacobi quartic elliptic curves is also investigated in \cite{DF13,DMF14,WWZL11}. 

The main computational operation in the arithmetic of elliptic curve cryptography is the scalar multiplication, that is to compute $kP$ for a given point $P$ on elliptic curve $E$ defined over a finite field $\F_q$ and a given integer $k$. This operation is performed recursively by point addition and point doubling operations. One trivial important factor for  fast implementation of these basic curve operations is to reduce the number of required finite field operations. On the other hand, the time or power differences between implementing point addition and point doubling can reveal information about the bits of secret $k$ which makes the system insecure against side channel attacks. To obtain a fast and secure implementation of the scalar multiplication different forms of elliptic curves with several coordinates systems, and different scalar multiplication algorithms have been studied \cite{HBook}. 

The Montgomery ladder \cite{Mo} is a well known technique to perform scalar multiplication of points for a special so-called Montgomery curves. In each step of the Montgomery ladder both point addition and point doubling operations are performed which makes the cost of ground finite field operations almost uniform for every bit of the scalar as a secret. This makes the method to be resistance against side channel attacks. In more details, the basic formulas in each step of the Montgomery ladder is differential addition
and doubling expressed only by the $x$-coordinates of the points. That is, for given two points and their difference on elliptic curve $E$, to compute their addition and doubling.  
In other words, the $x$-coordinates of the points $2Q$ and $2Q+P$ are computed from the $x$-coordinates of the points $Q$, $Q+P$ and the fixed point $P$. This recursive method computes the $x$-coordinate of the point $kP$. Note that to avoid the costly field inversion operation, the computations are performed where points are represented in projective coordinates.     

The Montgomery ladder processes the scalar multiplication on any elliptic curve $E$ defined over $\F$. The basic computation in each step of the Montgomery ladder, is differential addition and doubling on $E$.  That is, instead of working with the full representations of the points, differential addition and doubling are computed on the images of points at some suitable rational function $w$ on the elliptic curve $E$. Several differential addition formulas for  different forms of elliptic curves in odd and even characteristic are presented, see e.g \cite{BLF8,BERNLANG,CGF,FH,FH2,GL9,GGX12}. 
Gaudry and Lubicz \cite{GL9} give a fast differential addition Montgomery-like formulas for Kummer line. 
A new fast formulas for complete twisted Edwards curves are presented by Farashahi and Hosseini \cite{FH2}. Also, the fast Montgomery-like formulas for twisted Edwards and Montgomery curves with full 2-torsion points are presented in \cite{FH2,BERNLANG}.

The family of Jacobi quartic curves over $\F_q$ is the family of all elliptic curves over $\F_q$ with a point of order 2. This family properly includes (up to $\F_q$-isomorphism) all the families of Legendre, Edwards, twisted Edwards and Montgomery curves. 
Notice, the family of Montgomery and twisted Edwards curves do not cover all elliptic curves with full 2-torsion subgroup.
In particular, the differential addition Montgomery-like formulas are presented for Jacobi curves either with a point of order 4 or with full 2-torsion points. This means, the differential addition Montgomery-like formulas are presented for all elliptic curves over $\F_q$ with a subgroup of order 4. In addition, the fast Montgomery-like formulas are provided for complete Jacobi curves if the parameters are chosen properly. 

The paper is organized as follows. A review of extended Jacobi quartic curves  is given in \S \ref{sec:JQ}. Differential addition on Montgomery curves  is introduced in \S \ref{sec-dadd}. A  review of isogeny is introduced in  \S \ref{Iso}. 
New fast and efficient differential addition and doubling formulas for Jacobi quartics curves  are introduced in \S \ref{sec-daddJ}.  Finally, \S \ref{sec-Compar} concludes with previous works. 

Throughout the paper, the letter $p$ always denotes an odd prime number and $q$ denotes a prime power of $p$. 
A field is denoted by $\F$ and a finite field of size $q$ is denoted by $\F_q$. 
Let $\chi$ denote the quadratic character in $\F_q$, where $p\ge 3$. Then, for any $q$ where $p\ge 3$, we have
$a=r^2$ for some $r \in \F^*_q$ if and only if $\chi(a) =1$. 


\section{Jacobi quartics } \label{sec:JQ}

Let $\F$ be a field with characteristic not equal 2 and let $\epsilon,\delta \in \F$ such that $\epsilon(\delta^2-\epsilon)\ne 0$. 
An {\it (extended) Jacobi quartic curve} over the field 
$\F$, is given by the equation
\begin{equation}
	\label{eq:EJacob} \J_{\epsilon,\delta}: ~ y^2 = \epsilon x^4+2\delta x^2+1.
\end{equation}
The Jacobi quartic curve $\J_{\epsilon,\delta}$ is isomorphic to an elliptic curve of Weierstrass form
$$\E_{\epsilon,\delta}: ~y^2=( x^2-4\epsilon)( x+2\delta )$$ 
under the following birational maps (see \cite[Theorem 2.17]{WDC8}):   

\begin{equation*}
\begin{array}{cc}
\zeta: \J_{\epsilon,\delta} \longrightarrow E_{\epsilon,\delta} & \zeta^{-1}: E_{\epsilon,\delta} \longrightarrow  \J_{\epsilon,\delta} \vspace{3pt}\\
\zeta (x,y)=\left(\frac{2y}{x^2-4\epsilon},\frac{x^2+4\delta x+4\epsilon}{x^2-4\epsilon}\right)~\quad & ,\quad ~\zeta^{-1}(x,y)=\left(\frac{2(y+1)}{x^2},\frac{4(y+1)+4\delta x^2}{x^3}\right).
\end{array}
\end{equation*}
%

The addition and doubling law for $\J_{\epsilon,\delta}$ are given by 
\begin{equation}
\begin{array}{c}
\label{eq:AdEJ}
(x_1,y_1)~,~(x_2,y_2)  \mapsto \vspace*{5pt}\\

\left(\dfrac{x_1y_2+x_2y_1}  {1-\epsilon x_1^2x_2^2},\, \dfrac{(y_1y_2+2\delta x_1x_2)(1+\epsilon x_1^2x_2^2)+2\epsilon x_1x_2(x_1^2+x_2^2)}  {(1-\epsilon x_1^2x_2^2)^2}\right),
\end{array}
\end{equation}

\begin{equation}
\label{eq:DbEJ}
\begin{array}{c}
2(x_1,y_1)  \mapsto \left(\dfrac{2x_1y_1}  {1-\epsilon x_1^4},\,\dfrac{(y_1^2+2\delta x_1^2)(1+\epsilon x_1^4)+4\epsilon x_1^4}  {(1-\epsilon x_1^4)^2} \right).
\end{array}
\end{equation}
The negative of a point $(x,y)$ is $(-x,y)$. This curve has two points $(0,1)$ and $(0,-1)$ which are invariant under the negation law. The point  $\cO=(0,1)$ is the identity element of addition law while  $(0,-1)$ is a point of order 2. Billet and Joye in \cite{BJ} remark that any elliptic curve with a point of order 2 can be expressed by a Jacobi quartic curve given by the Eq. \eqref{eq:EJacob}. 

This family strictly covers (up to $\F_q$-isomorphism) the families of Legendre, Edwards, twisted Edwards and Montgomery curves. Notice, there are Montgomery or twisted Edwards curves that can not be represented over $\F_q$ in the Jacobi form. 

We note that, the doubling formulas \eqref{eq:DbEJ} are obtained from the addition formulas \eqref{eq:AdEJ}.  
Hisil et.al. \cite{HWD9} show that if $\epsilon$ is a non-square in $\F$ then the addition formulas are complete, i.e., the addition formulas \eqref{eq:AdEJ} work for any input points in $\J_{\epsilon,\delta}(\F)$.
The projective closure of the Jacobi quartic curve $ \J_{\epsilon,\delta}$  in $\mathbb{P}^2$ includes the projective points $(X:Y:Z)$ in $\mathbb{P}^2(\F)$ satisfying the curve equation   
\begin{equation} \label{PJacob}
 Y^2Z^2 = \epsilon X^4+2\delta X^2Z^2+Z^4.
\end{equation}

 Hisil et.al.~\cite{HWD9} consider the projective closure of  $ \J_{\epsilon,\delta}$ embedded in $\mathbb{P}^3$ given by the projective homogeneous equations:  
\begin{equation*}
\begin{cases}
X^2-TZ=0 \quad ~\\
\quad ~\\
Y^2-\epsilon T^2-2\delta X^2-Z^2=0 
\end{cases}
\end{equation*}
%
In this extended projective homogeneous coordinates \cite{HWD9}, an affine point $(X,Y)$ represented by $(X:Y:T:Z)$ where $T=X^2/Z$. 
Here, the addition formulas are given by 
\begin{gather} \label{Ad:TE1}
(X_3 : Y_3 : T_3: Z_3)=(X_1 : Y_1 : T_1: Z_1)+(X_2 : Y_2 : T_2 : Z_2)  \hspace*{37pt} \nonumber\\
\begin{array}{c} \label{eq:TEH1}
X_3=(X_1Y_2+X_2Y_1)(Z_1Z_2-\epsilon T_1T_2), \\
Y_3= (Y_1Y_2+2\delta X_1X_2)(Z_1Z_2+\epsilon T_1T_2)+2\epsilon X_1X_2(T_1Z_2+T_2Z_1), \\
T_3= (X_1Y_2+X_2Y_1)^2,\\
Z_3=  (Z_1Z_2-\epsilon T_1T_2)^2.
\end{array}
\end{gather}
A point representation $(X:Y:Z)$ satisfying \eqref{PJacob} can be converted to the new coordinates by computing $(XZ:YZ:X^2:Z^2)$ with $Z \ne 0$. The identity point is $\cO=(0:1:0:1)$ and the additive negation of a point  $(X:Y:T:Z)$ is $(-X:Y:T:Z)$. 
The point $\mathcal{T}=(0:-1:0:1)$ is a point of order 2 and the points at infinity $\infty=(0:\sqrt{\epsilon}:1:0)$, $\overline{\infty}=(0:- \sqrt{\epsilon}:1:0)$ are the $\F$-rational points of order 2 if $\epsilon$ is a square in $\F$. So 
\begin{equation*}
\J_{\epsilon,\delta}[2]=\{(0:\pm 1:0:1),(0:\pm \sqrt{\epsilon}:1:0) \}.
\end{equation*}

If $\chi(\epsilon) =1$ then the Jacobi curve $\chi(\epsilon) =1$ has full 2-torsion. In addition, the subfamily of Jacobi curves $\J_{\epsilon,\delta}$ for all $\epsilon,\delta \in \F$ with $\chi(\epsilon) =1$ is the family of all elliptic curves over $\F$ with full 2-torsion $\F$-rational points.

\begin{lemma}\label{LemJ1}
	For Jacobi quartic curve $\J_{\epsilon,\delta}$ over the field $\F$ of $\ch(\F) \ne 2$,  we have
	\begin{equation*}
	\label{J2}
	\J_{\epsilon,\delta}(\F)[2]= 
	\begin{cases}
	<\mathcal{T}> \quad \;\;\, \cong \mathbb{Z}_2,  &\text{if }  \chi(\epsilon)=-1,\\
	<\mathcal{T},\infty> \; \cong \mathbb{Z}_2\times \mathbb{Z}_2,  & \text{if } \chi(\epsilon)=1.\\
	\end{cases}
	\end{equation*}
\end{lemma}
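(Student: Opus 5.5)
We determine all $\F$-rational points $P$ on the projective model in $\mathbb{P}^3$ with $2P=\cO$, i.e.\ with $P=-P$. Negation is $(X:Y:T:Z)\mapsto(-X:Y:T:Z)$, so we solve $(X:Y:T:Z)=(-X:Y:T:Z)$ in projective space, using the defining equations $X^2=TZ$ and $Y^2=\epsilon T^2+2\delta X^2+Z^2$.

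\textbf{Step 1: reduce to $X=0$.} Equality of $(X:Y:T:Z)$ and $(-X:Y:T:Z)$ means there is $\lambda\ne0$ with $-X=\lambda X$, $Y=\lambda Y$, $T=\lambda T$, $Z=\lambda Z$. If $X\neq 0$ then $\lambda=-1$, and since $\ch(\F)\ne 2$ this forces $Y=T=Z=0$. The second equation then gives $2\delta X^2=0$. If $\delta\ne0$ this contradicts $X\ne0$. If $\delta=0$ we would get the point $(1:0:0:0)$, but it violates the first equation $X^2=TZ$. Hence $X=0$ for every point of order dividing $2$.

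\textbf{Step 2: enumerate the points with $X=0$.} From $X^2=TZ$ we get $TZ=0$.
\begin{itemize}
\item If $T=0$, then $Z\ne0$ (otherwise the second equation gives $Y=0$ too, which is impossible). Normalizing $Z=1$ gives $Y^2=1$, so $Y=\pm1$. This yields $\cO$ and $\cT=(0:-1:0:1)$.
\item If $Z=0$, then $T\neq0$. Normalizing $T=1$ gives $Y^2=\epsilon$, which has solutions in $\F$ exactly when $\chi(\epsilon)=1$. In that case we get the two points $\infty$ and $\overline{\infty}$, which are distinct because $\epsilon\ne0$.
\end{itemize}
Conversely, every point listed satisfies $P=-P$. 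So $\J_{\epsilon,\delta}(\F)[2]$ is $\{\cO,\cT\}\cong\Z_2$ when $\chi(\epsilon)=-1$, and is a group of order $4$ when $\chi(\epsilon)=1$.

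\textbf{Step 3: identify the group of order $4$.} In the case $\chi(\epsilon)=1$, every non-identity element has order $2$, so the group is $\Z_2\times\Z_2$. It is generated by any two distinct non-identity elements, for instance $\cT$ and $\infty$; consistently, $\cT+\infty=\overline{\infty}$. This can also be checked directly from \eqref{Ad:TE1}.

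\textbf{Main obstacle.} The only delicate point is Step 1. One has to rule out spurious solutions when working in projective space, and to check that nonsingularity and the hypothesis $\epsilon(\delta^2-\epsilon)\neq0$ are enough. The remaining steps are routine case analysis.
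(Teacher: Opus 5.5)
Your proof is correct and follows the same route as the paper. The paper simply reads off which of the four points of $\J_{\epsilon,\delta}[2]=\{(0:\pm1:0:1),(0:\pm\sqrt{\epsilon}:1:0)\}$ are $\F$-rational; you additionally derive that list by solving $P=-P$ in the $\mathbb{P}^3$ model, which the paper only asserts.

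One small slip: the formulas \eqref{Ad:TE1} return $(0:0:0:0)$ on the pair $(\cT,\infty)$, so $\cT+\infty=\overline{\infty}$ cannot be checked there directly. You could check $\infty+\overline{\infty}=\cT$ instead, which the formulas do handle, or just rely on your group-order argument, which already suffices.
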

\begin{proof}
	If $ \chi(\epsilon)=-1$ then $J[2](\F)=\{\cO,\mathcal{T}\}=<\mathcal{T}>$. And, if $\chi(\epsilon)=1$, then Jacobi quartic curve $\J_{\epsilon,\delta}$ has the full $2$-torsion points $\{\cO,\mathcal{T},\infty,\overline{\infty}\}$. Therefore, $J[2](\F)=J[2]= <\mathcal{T},\infty>$.
	\qed
\end{proof}

From Lemma \ref{LemJ1}, we see that, for the point $P=(x,y) \in  \J_{\epsilon,\delta}(\F) $, the coset $P+\J_{\epsilon,\delta}(\F)[2]$ of $\J_{\epsilon,\delta}(\F)[2]$, is given by
\begin{equation*}
P+\J_{\epsilon,\delta}(\F)[2]=
\begin{cases}
\{ (x,y),(-x,-y) \},  &\text{if }  \chi(\epsilon)=-1,\\
\{ (x,y),(-x,-y),(\frac{1}{\sqrt{\epsilon}x},\frac{-y}{\sqrt{\epsilon}x^2}),(\frac{-1}{\sqrt{\epsilon}x},\frac{y}{\sqrt{\epsilon}x^2}) \},  & \text{if } \chi(\epsilon)=1.\\
\end{cases}
\end{equation*}



Here, we compute the set of 4-torsion points of $\J_{\epsilon,\delta}$. If $P$ is a point of order 4, then we have $2P \in J[2]$, so  $2P=(0,-1)$  or $2P=(0:\pm \sqrt{\epsilon}:1:0)$. If $2P=(0,-1)$ then, from the doubling formula~\eqref{eq:DbEJ}, we have $P \in \{(\pm  \sqrt{r_1} , 0), (\pm  \sqrt{r_2} , 0)\}$, where 
\begin{equation*}
r_1=\dfrac{-\delta+\sqrt{\delta^2-\epsilon}}{\epsilon}~,~r_2=\dfrac{-\delta-\sqrt{\delta^2-\epsilon}}{\epsilon}.
\end{equation*}
If $2P=(0:\pm \sqrt{\epsilon}:1:0)$ then, from addation formula \eqref{eq:TEH1}, we have  
$XY \ne 0$, $T=\pm \frac{1}{\sqrt{\epsilon}}$, $X^2=\pm \frac{1}{\sqrt{\epsilon}}$ and $Y^2=2(1 \pm \frac{\delta}{\sqrt{\epsilon}} )$. So, 
$P \in \{(\pm \sqrt{\alpha_1}, \pm \sqrt{\beta_1} ), (\pm \sqrt{\alpha_2}, \pm \sqrt{ \beta_2} )\}$, where 

\begin{equation*}
\alpha_1=\frac{1}{\sqrt{\epsilon}}~,~\alpha_2=\frac{-1}{\sqrt{\epsilon}}~,~\beta_1 =2(1+\frac{\delta}{\sqrt{\epsilon}})~,~\beta_2 =2(1-\frac{\delta}{\sqrt{\epsilon}}).
\end{equation*}
Therefore, if Jacobi quartic curve $\J_{\epsilon,\delta}$  has a point $P$ of order $4$ then  
\begin{equation*}
P \in \left\{(\pm \sqrt{r_1} , 0 ),(\pm \sqrt{ r_2}, 0 ),(\pm \sqrt{\alpha_1}, \pm \sqrt{\beta_1} ), (\pm \sqrt{\alpha_2}, \pm \sqrt{ \beta_2} )\right\}.
\end{equation*}
For simplicity, we let 
$Q_1=(\sqrt{r_1} , 0 )$, $Q_2=(\sqrt{r_2}, 0)$,
$Q_3=(\sqrt{\alpha_1}, \sqrt{\beta_1})$, $\overline{Q}_3=(-\sqrt{\alpha_1}, -\sqrt{\beta_1})$, $Q_4=(\sqrt{\alpha_2},\sqrt{\beta_2})$ and  $\overline{Q}_4=(-\sqrt{\alpha_2},-\sqrt{\beta_2})$. 
So, 
\begin{equation*}
\J_{\epsilon, \delta}[4]=\left\{\cO,\mathcal{T},\infty,\overline{\infty}, \pm Q_1, \pm Q_2, \pm Q_3,  \pm \overline{Q}_3, \pm Q_4, \pm\overline{Q}_4 \right\}.
\end{equation*}
The group table of $\J_{\epsilon, \delta}[4]$ is easily obtained from Table~\ref{Tab:grp}.
\begin{table}[h!]
	\centering
	\caption{Group table of $\J_{\epsilon, \delta}$[4]}\label{Tab:grp}
	\renewcommand{\arraystretch}{2.25}
	\begin{tabular}{c|c|c|c|c|c|c|c}
		\hline
		$ + $ & $\mathcal{T}$&$ \infty$ &$\overline{\infty}$& $~ Q_1~$ &$ Q_2~$ &$ Q_3~$&$Q_4$\\
		\hline
		$Q_1$&$-Q_1$&$Q_2$&$-Q_2$&$\mathcal{T}$&$\overline{\infty}$&$Q_4$&$ \overline{Q}_3$ \\
		\hline
		$Q_2$&$-Q_2$&$Q_1$&$-Q_1$&$\overline{\infty}$&$\mathcal{T}$&$- \overline{Q}_4$ &$- \overline{Q}_3 $ \\
		\hline
		$Q_3$&$ \overline{Q}_3 $&$-Q_3$&$-\overline{Q}_3 $&$Q_4$&$- \overline{Q}_4$&$\infty $&$Q_2$ \\
		\hline
		$Q_4$&$ \overline{Q}_4$ &$- \overline{Q}_4 $&$-Q_4$&$ \overline{Q}_3$&$- \overline{Q}_3$&$Q_2$& $\overline{\infty} $\\
		\hline
	\end{tabular}
\end{table}
\begin{lemma}\label{LemJ3}
For Jacobi quartic curve $\J_{\epsilon,\delta}$ over the field $\F$ of $\ch(\F) \ne 2$,
if $\epsilon$ is a non-square in $\F$ then we have 
\begin{equation*}
\label{J2}
J[4](\F)= 
\begin{cases}
<\mathcal{T}>, &\text{if }  \chi(\delta^2-\epsilon)=-1,\\
<\mathcal{T}>, & \text{if }  \chi(\delta^2-\epsilon)=1, \chi(r_1)=\chi(r_2)=-1, \\
<Q_1> or <Q_2>, & \chi(\delta^2-\epsilon)=1, \chi(r_1)=1 \text{ or } \chi(r_2)=1.\\
\end{cases}
\end{equation*}
\end{lemma}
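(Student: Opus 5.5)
Since $\chi(\epsilon)=-1$, Lemma~\ref{LemJ1} gives $\J_{\epsilon,\delta}(\F)[2]=<\mathcal{T}>=\{\cO,\mathcal{T}\}$. The $\F$-rational $4$-torsion is therefore the set of $\F$-points $P$ with $2P\in\{\cO,\mathcal{T}\}$. The plan is to read this set off from the explicit list of $4$-torsion points computed before the lemma.

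\textbf{Discarding $\pm Q_3,\pm\overline{Q}_3,\pm Q_4,\pm\overline{Q}_4$.} These are the points with $2P\in\{\infty,\overline{\infty}\}$. The points $\infty,\overline{\infty}$ are not $\F$-rational because $\sqrt{\epsilon}\notin\F$. If such a $P$ were $\F$-rational, then $2P$ would be $\F$-rational as well, which is impossible. One can also see this directly: the $x$-coordinates of these points satisfy $x^2=\pm 1/\sqrt{\epsilon}\notin\F$, so $x\notin\F$. The only candidates left are $\pm Q_1,\pm Q_2$, the points $(x,0)$ with $x^2\in\{r_1,r_2\}$, that is, $\epsilon x^4+2\delta x^2+1=0$.

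\textbf{Case analysis on $\pm Q_1,\pm Q_2$.} Such a point is $\F$-rational exactly when the corresponding $r_i$ lies in $\F$ and is a square there.
\begin{itemize}
\item If $\chi(\delta^2-\epsilon)=-1$, then $\sqrt{\delta^2-\epsilon}\notin\F$, so $r_1,r_2\notin\F$ (they are conjugate). Then $x^2\notin\F$, so $x\notin\F$, and no $Q_i$ is rational. Hence $J[4](\F)=\{\cO,\mathcal{T}\}=<\mathcal{T}>$.
\item If $\chi(\delta^2-\epsilon)=1$, then $r_1,r_2\in\F$, and $\pm Q_i$ is rational if and only if $\chi(r_i)=1$.
\begin{itemize}
\item If $\chi(r_1)=\chi(r_2)=-1$, there are again no new points, and $J[4](\F)=<\mathcal{T}>$.
\item Otherwise some $\chi(r_i)=1$. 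Table~\ref{Tab:grp} gives $2Q_1=2Q_2=\mathcal{T}$, so $<Q_i>=\{\cO,Q_i,\mathcal{T},-Q_i\}$.
\end{itemize}
\end{itemize}

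\textbf{Main obstacle: only one $Q_i$ can be rational.} We need that at most one of $\chi(r_1),\chi(r_2)$ equals $1$. I would use $r_1r_2=1/\epsilon$ (the product of the roots of $\epsilon u^2+2\delta u+1$), which gives $\chi(r_1)\chi(r_2)=\chi(\epsilon)=-1$. So exactly one of $r_1,r_2$ is a square. This also agrees with Table~\ref{Tab:grp}: $Q_1+Q_2=\overline{\infty}$ is not rational, so $Q_1$ and $Q_2$ cannot both be rational. Therefore $J[4](\F)$ is cyclic of order $4$, equal to $<Q_1>$ or $<Q_2>$ according to which $r_i$ is a square. This completes the three cases of the statement.
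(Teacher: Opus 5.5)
Your proposal is correct and follows essentially the same route as the paper: discard $\pm Q_3,\pm\overline{Q}_3,\pm Q_4,\pm\overline{Q}_4$ because $\sqrt{\epsilon}\notin\F$, split on $\chi(\delta^2-\epsilon)$, and use $r_1r_2=1/\epsilon$ to show that at most one of $Q_1,Q_2$ is rational. Keep your extra check that $Q_1+Q_2=\overline{\infty}$ is not rational: over a general field, where $\chi$ need not be multiplicative, $r_1r_2=1/\epsilon$ only gives ``at most one $r_i$ is a square'' and not ``exactly one'', which is also why the case $\chi(r_1)=\chi(r_2)=-1$ can genuinely occur there but is vacuous over $\F_q$.
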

\begin{proof}
Since $\chi(\epsilon)=-1$, so the only points of order $2$ are $(0,\pm 1)$ and $\alpha_1, \alpha_2 \notin \F $. So  $Q_3 , Q_4 \notin \J[4](\F) $.

\begin{description}
\item[$\bullet$] If $\chi(\delta^2-\epsilon)=-1$ then $r_1, r_2 \notin \F $, so  $Q_1 , Q_2 \notin \J[4](\F) $. Hence 
$J[4](\F)=J[2](\F)=\{\cO,\mathcal{T} \}$, so
$ J[4](\F) = <\mathcal{T}> $.

\item[$\bullet$] If $\chi(\delta^2-\epsilon)=1$  then $r_1, r_2 \in \F $. 
 Since $\chi(r_1 r_2)=\chi(\epsilon)$ and $\chi(\epsilon)=-1$, so either both of the $r_1$ and $r_2$ are non-square or exactly one of them is square. So we need to check two cases, $\chi(r_1)=\chi(r_2)=-1$ and $\chi(r_1)=1$ or $\chi(r_2)=1$.

\begin{itemize}
	\item Let $\chi(r_1)=\chi(r_2)=-1$. 
 so  $Q_1 , Q_2 \notin \J[4](\F) $. Hence 
$J[4](\F)=J[2](\F)=\{\cO,\mathcal{T} \}$, so $ J[4](\F) = <\mathcal{T}> $.

\item Let $\chi(r_1)=1$ or $\chi(r_2)=1$.  If $\chi(r_1)=1$ then $\chi(r_2)=-1$ and if $\chi(r_2)=1$ then $\chi(r_1)=-1$. So exactly one of the points of  $Q_1$ and $Q_2$ are belongs to $\J[4](\F)$ and other is not. So $ J[4](\F) = <Q_1> $ or $ J[4](\F) = <Q_2> $. \qed
\end{itemize}
	\end{description}
\end{proof}
\begin{lemma}\label{LemJ3}
For Jacobi quartic curve $\J_{\epsilon,\delta}$ over the field $\F$ of $\ch(\F) \ne 2$,
if $\epsilon$ is a square in $\F$ then we have 
\begin{itemize}
\item[(i)] If $\chi(\delta^2-\epsilon)=-1$  then  
\begin{equation*}
		\begin{array}{c}
			\label{J4}
			J[4](\F) = \\
			\begin{cases}
<\mathcal{T},\infty >, &\text{if }   \chi(-1)=-1, \chi(\sqrt{\epsilon})=\chi(-\sqrt{\epsilon})=-1,\\
<\mathcal{T},\infty >, &\text{if }   \chi(-1)=-1, \chi(\sqrt{\epsilon})=1, \chi(\beta_1)=-1,\\
<\mathcal{T},\infty >, &\text{if }  \chi(-1)=-1, \chi(-\sqrt{\epsilon})=1, \chi(\beta_2)=-1,\\
 <Q_{3},\mathcal{T}>, & \text{if }    \chi(-1)=-1, \chi(\sqrt{\epsilon})=\chi(\beta_1)=1, \\
<Q_{4},\mathcal{T}>, & \text{if }  \chi(-1)=-1, \chi(-\sqrt{\epsilon})=\chi(\beta_2)=1, \\
<\mathcal{T},\infty>, &\text{if }   \chi(-1)=1, \chi(\sqrt{\epsilon})=-1 , \\
<\mathcal{T}, \infty>, &\text{if }   \chi(-1)=1, \chi(\sqrt{\epsilon})=1, \chi(\beta_1)=\chi(\beta_2)=-1, \\
<Q_{3},\mathcal{T}> , &\text{if }   \chi(-1)=1, \chi(\sqrt{\epsilon})=1, \chi(\beta_1)=1, \\
<Q_{4},\mathcal{T}> , &\text{if }   \chi(-1)=1, \chi(\sqrt{\epsilon})=1, \chi(\beta_2)=1,\\
			\end{cases}
		\end{array}
	\end{equation*}
\item[(ii)] If $\chi(\delta^2-\epsilon)=1$  then  
\begin{equation*}
		\begin{array}{c}
			\label{J4}
			J[4](\F) =\\
		\begin{cases}
<Q_{1},\infty> =<Q_{2},\infty>,   &\text{if }   \chi(-1)=-1,  \chi(r_1)=1,\\
<\mathcal{T},\infty >, &\text{if }   \chi(-1)=-1,\chi(r_1)=-1, \chi(\sqrt{\epsilon})=\chi(-\sqrt{\epsilon})=-1, \\
<\mathcal{T},\infty >, &\text{if }  \chi(-1)=-1,\chi(r_1)=-1, \chi(\sqrt{\epsilon})=1, \chi(\beta_1)=-1, \\
<\mathcal{T},\infty >, &\text{if }   \chi(-1)=-1,\chi(r_1)=-1,  \chi(-\sqrt{\epsilon})=1, \chi(\beta_2)=-1,\\
 <Q_{3},\mathcal{T}>, & \text{if }   \chi(-1)=-1, \chi(r_1)=-1, \chi(\sqrt{\epsilon})=\chi(\beta_1)=1, \\
<Q_{4},\mathcal{T}>, & \text{if }    \chi(-1)=-1, \chi(r_1)=-1,\chi(-\sqrt{\epsilon})=\chi(\beta_2)=1, \\
<\mathcal{T},\infty>,  &\text{if }   \chi(-1)=1,  \chi(r_1)=-1,\\
<Q_{1},\infty> =<Q_{2},\infty>, &\text{if }  \chi(-1)=1, \chi(r_1)=1,  \chi(\sqrt{\epsilon})=-1,\\
J[4]=<Q_1,Q_3>, &\text{if }   \chi(-1)=1, \chi(r_1)=1, \chi(\sqrt{\epsilon})=1.
				
			\end{cases}
		\end{array}
	\end{equation*}
\end{itemize}
\end{lemma}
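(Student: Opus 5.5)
We decide, for each case, which of the twelve points of order $4$ listed above are $\F$-rational, and then use Table~\ref{Tab:grp} to identify the group they generate together with $\J_{\epsilon,\delta}(\F)[2]$. Since $\chi(\epsilon)=1$, Lemma~\ref{LemJ1} gives $\J_{\epsilon,\delta}(\F)[2]=<\mathcal{T},\infty>\cong\Z_2\times\Z_2$. So $J[4](\F)$ is one of three groups: $\Z_2\times\Z_2$ if no $4$-torsion point is rational, $\Z_4\times\Z_2$ if the rational $4$-torsion points have doubles in a single order-$2$ subgroup, or the full $J[4]\cong\Z_4\times\Z_4$.

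The rationality conditions come from the explicit coordinates.
\begin{itemize}
\item $\pm Q_1,\pm Q_2$ are rational iff $\sqrt{\delta^2-\epsilon}\in\F$ and $r_1$ (resp.\ $r_2$) is a square. Since $r_1r_2=1/\epsilon$ and $\chi(\epsilon)=1$, we get $\chi(r_1)=\chi(r_2)$ in case (ii). So either both pairs are rational or neither is. This explains why only $\chi(r_1)$ appears, and why $<Q_1,\infty>=<Q_2,\infty>$ by the row $Q_1+\infty=Q_2$ of the table.
\item $\pm Q_3,\pm\overline{Q}_3$ are rational iff $\alpha_1=1/\sqrt{\epsilon}$ and $\beta_1$ are squares, i.e.\ $\chi(\sqrt{\epsilon})=\chi(\beta_1)=1$.
\item Similarly, $Q_4$ is rational iff $\chi(-\sqrt{\epsilon})=\chi(\beta_2)=1$.
\end{itemize}
Here $\chi(-\sqrt{\epsilon})=\chi(-1)\chi(\sqrt{\epsilon})$, so we split on $\chi(-1)$:
\begin{itemize}
\item If $\chi(-1)=-1$, exactly one of $\pm\sqrt{\epsilon}$ is a square. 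At most one of the families $Q_3$, $Q_4$ can then be rational, namely the one whose $\beta_i$ is a square.
\item If $\chi(-1)=1$, both or neither of $\pm\sqrt{\epsilon}$ are squares. We also use $\beta_1\beta_2=4(\epsilon-\delta^2)/\epsilon$, so $\chi(\beta_1\beta_2)=\chi(-1)\chi(\delta^2-\epsilon)$, which constrains how $\chi(\beta_1)$ and $\chi(\beta_2)$ can differ.
\end{itemize}

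Next we read off the group in each subcase. If no order-$4$ point is rational, $J[4](\F)=<\mathcal{T},\infty>$. If only $Q_3$-type points are rational, they have $2Q_3=\infty$, and the group is $\{\cO,\mathcal{T},\infty,\overline{\infty},\pm Q_3,\pm\overline{Q}_3\}$. Since $Q_3+\mathcal{T}=\overline{Q}_3$, this equals $<Q_3,\mathcal{T}>$; the case of $Q_4$ is analogous. If only the $Q_1,Q_2$ family is rational, the group is $<Q_1,\infty>$.

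In case (ii) with $\chi(r_1)=1$ and $\chi(-1)=-1$, we must check that no $Q_3$ or $Q_4$ is rational. Otherwise a $\Z_4\times\Z_4$ would sit inside $E(\F)$, forcing $\F$ to contain a primitive $4$th root of unity by the Weil pairing, i.e.\ $\chi(-1)=1$, a contradiction. This is why the statement does not split further there.

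Conversely, if $\chi(-1)=1$, $\chi(r_1)=1$ and $\chi(\sqrt{\epsilon})=1$, we need some $\beta_i$ to be a square so that all of $J[4]$ is rational, giving $<Q_1,Q_3>$. In case (ii) with $\chi(-1)=1$ we have $\chi(\beta_1)=\chi(\beta_2)$, so we must verify directly that this common value is $1$ under the hypotheses. For this I would express $\beta_1=2(\sqrt{\epsilon}+\delta)/\sqrt{\epsilon}$ and relate $\sqrt{\epsilon}+\delta$ to $r_1$: for instance, $\epsilon r_1 = -\delta+\sqrt{\delta^2-\epsilon}$, and $(\sqrt{\delta^2-\epsilon}-\delta)(\ldots)$ factorizations give $2(\delta+\sqrt{\epsilon})$ as a square times $r_1$-related quantities. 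Alternatively, I would add $Q_1+Q_3$ via formula~\eqref{eq:AdEJ}, which yields $Q_4$ by the table. That shows $Q_4$ is rational whenever $Q_1$ and $Q_3$ are, but not directly that $Q_3$ itself is.

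The main obstacle is this last consistency check, together with the subcase of (ii) with $\chi(-1)=1$ and $\chi(r_1)=-1$, where the statement asserts the group is only $<\mathcal{T},\infty>$. There I must show that no $Q_3$ or $Q_4$ is rational. I would try to prove the identity $\beta_1\cdot r_1\cdot(\text{square})=\text{square}$ by direct algebra on $\sqrt{\epsilon}$, $\delta$ and $\sqrt{\delta^2-\epsilon}$. If that fails, I would fall back on a group-theoretic count via $2$-descent (the halving criterion on the Weierstrass model $\E_{\epsilon,\delta}$): a point of $E[2]$ is in $2E(\F)$ iff the differences of the roots of $(x^2-4\epsilon)(x+2\delta)$ satisfy the corresponding square conditions.
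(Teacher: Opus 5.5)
Your framework is the paper's: rationality of $Q_1,Q_2$ via $\chi(r_1)=\chi(r_2)$, of $Q_3,Q_4$ via $\chi(\pm\sqrt\epsilon)$ and $\chi(\beta_i)$, then reading off the group from Table~\ref{Tab:grp}. Your Weil-pairing argument for case (ii) with $\chi(-1)=-1$, $\chi(r_1)=1$ is a valid substitute for the paper's argument there ($Q_1+Q_3=Q_4$ together with $\chi(\alpha_1\alpha_2)=-1$). However, the two cases you flag as obstacles are left unproved, and they are where the lemma has content beyond bookkeeping. The first, case (ii) with $\chi(-1)=1$, $\chi(r_1)=-1$, needs no identity. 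There $\chi(\alpha_1)=\chi(\alpha_2)$ and $\chi(\beta_1)=\chi(\beta_2)$, so $Q_3$ is rational if and only if $Q_4$ is. If both were rational, then $Q_3+Q_4$ ($=Q_2$ by the table) would be rational, contradicting $\chi(r_1)=\chi(r_2)=-1$. This closure argument is what the paper uses, and you had all its ingredients.

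The full-$4$-torsion row ($\chi(-1)=\chi(r_1)=\chi(\sqrt\epsilon)=1$) is a genuine gap. Group closure cannot settle it, since $\langle Q_1,\infty\rangle$ would be a perfectly consistent outcome if $\chi(\beta_1)=\chi(\beta_2)=-1$. You need an actual algebraic relation between $\beta_1$ and $r_1$. Your sketch (``factorizations give $2(\delta+\sqrt\epsilon)$ as a square times $r_1$-related quantities'') does not supply one. The $2$-descent fallback is not carried out, and translating its conditions back into $\chi(r_1)$ and $\chi(\beta_1)$ amounts to the same identity. The missing fact is the paper's identity $\alpha_1\beta_1=-(\sqrt{r_1}+\sqrt{r_2})^2$, with signs chosen so that $\sqrt{r_1}\sqrt{r_2}=-1/\sqrt\epsilon$; it follows from $r_1+r_2=-2\delta/\epsilon$ and $r_1r_2=1/\epsilon$. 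When $r_1,r_2$ are squares in $\F$, this gives $\chi(\alpha_1\beta_1)=\chi(-1)=1$, hence $\chi(\beta_1)=1$, so all of $J[4]$ is rational. Equivalently, with $f=\sqrt{\delta^2-\epsilon}$ one has $2(\delta+\sqrt\epsilon)(\delta-f)=(\delta-f+\sqrt\epsilon)^2$, that is, $-r_1\alpha_1\beta_1=\bigl((\delta-f+\sqrt\epsilon)/\epsilon\bigr)^2$. So $\chi(\alpha_1\beta_1)=\chi(-r_1)$ throughout case (ii), and likewise $\chi(\alpha_2\beta_2)=\chi(-r_1)$ after replacing $\sqrt\epsilon$ by $-\sqrt\epsilon$. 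This is precisely the ``$\beta_1\cdot r_1\cdot(\text{square})=\text{square}$'' you were guessing at, and it would dispose of both of your obstacles at once.
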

\begin{proof}
Since $\chi(\epsilon)=1$, so Jacobi quartic curve $\J_{\epsilon,\delta}$ has a full 2-torsion subgroup as 
\begin{equation*}
J[2](\F)=\{\cO,\mathcal{T},\infty,\overline{\infty} \}.
\end{equation*}
We knowe that, if Jacobi quartic curve $\J_{\epsilon,\delta}$  has a point $P$ of order $4$ then  $2P \in J[2](\F)$ and  
\begin{equation*}
P \in \{(\pm \sqrt{r_1} , 0 ),(\pm \sqrt{ r_2}, 0 ),(\pm \sqrt{\alpha_1}, \pm \sqrt{\beta_1} ), (\pm \sqrt{\alpha_2}, \pm \sqrt{ \beta_2} )\}.
\end{equation*}
We have also
\begin{equation*}
\chi(r_1r_2)=\chi(\epsilon)~~,~~\chi(\alpha_1 \alpha_2)=\chi(-\epsilon)~~,~~\chi(\beta_1\beta_2)=\chi(-\epsilon(\delta^2-\epsilon)).
\end{equation*}

\begin{itemize}
\item[(i)] If $\chi(\delta^2-\epsilon)=-1$ then $r_1,r_2 \notin \F$ and so $Q_1,Q_2 \notin \J[4](\F)$. So from group table \eqref{Tab:grp} either none of the $Q_3$ and $Q_4$ belongs to $\J[4](\F)$  or 
only one of them is  belongs to $\J[4](\F)$. We need to check two cases, $\chi(-1)=1$ and $\chi(-1)=-1$.
\begin{description}
 \item[$\bullet$] Let $\chi(-1)=-1$. Since  $ \chi(\alpha_1 \alpha_2)=\chi(-\epsilon)=-1$, 
 so either both of the $\alpha_1$ and $\alpha_2$ are non-square or exactly one of them is square.
\begin{itemize}
\item  If $\chi(\sqrt{\epsilon})=\chi(-\sqrt{\epsilon})=-1$ then  $\alpha_1 , \alpha_2 \notin \F$, so there is no point of order $4$ and $J[4](\F)=J[2](\F)$. So $ J[4](\F) = <\mathcal{T},\infty> $.
\item If $\chi(\sqrt{\epsilon})=1$  then $\chi(\alpha_1)=1$ and so $\chi(\alpha_2)=-1$. So $Q_4 \notin \J[4](\F)$. 
\begin{itemize}
\item[-] If  $\chi(\beta_1)=-1$ then $Q_3 \notin \J[4](\F)$. So
there is no point of order $4$ and $J[4](\F)=J[2](\F)$. So $ J[4](\F) = <\mathcal{T},\infty> $.
\item[-] If  $\chi(\beta_1)=1$ then $Q_3 \in \J[4](\F)$. So  $ J[4](\F) = <Q_3,\mathcal{T}> $.
\end{itemize}
\item If $\chi(-\sqrt{\epsilon})=1$  then  $\chi(\alpha_2)=1$ and so $\chi(\alpha_1)=-1$. So $Q_3 \notin \J[4](\F)$. 
\begin{itemize}
\item[-] If  $\chi(\beta_2)=-1$ then $Q_4 \notin \J[4](\F)$. So
there is no point of order $4$ and $J[4](\F)=J[2](\F)$. So $ J[4](\F) = <\mathcal{T},\infty> $.
\item[-] If  $\chi(\beta_2)=1$ then $Q_4 \in \J[4](\F)$. So  $ J[4](\F) = <Q_4,\mathcal{T}> $.
\end{itemize}
\end{itemize}
\item \item[$\bullet$] Let $\chi(-1)=1$. Since  $\chi(\alpha_1 \alpha_2)=\chi(-\epsilon)=1$, so $\chi(\sqrt{\epsilon})=\chi(-\sqrt{\epsilon})$.  
\begin{itemize}
	\item  If $\chi(\sqrt{\epsilon})=-1$ then  $\chi(-\sqrt{\epsilon})=-1$, so  $\alpha_1 , \alpha_2 \notin \F$. Hence, there is no point of order $4$ and $J[4](\F)=J[2](\F)$. So $ J[4](\F) = <\mathcal{T},\infty> $.

\item  If $\chi(\sqrt{\epsilon})=1$ then  $\chi(-\sqrt{\epsilon})=1$, so  $\alpha_1 , \alpha_2 \in \F$.
Also, since  $ \chi(\delta^2- \epsilon)=-1$ and $\chi(-1)=1$ so we have  $\chi(\beta_1 \beta_2)=-1$. 
So either both of the $\beta_1$ and $\beta_2$ are non-square or exactly one of them is square.
\begin{itemize}
\item[-] If $\chi(\beta_1)=\chi(\beta_2)=-1$ then there is no point of order $4$ and $J[4](\F)=J[2](\F)$. So $ J[4](\F) = <\mathcal{T},\infty> $.
\item[-] If $\chi(\beta_1)=1$ then $\chi(\beta_2)=-1$. Hence $Q_3 \in \J[4](\F)$ and $Q_4 \notin \J[4](\F)$. So  $ J[4](\F) = <Q_3,\mathcal{T}> $.
\item[-] If $\chi(\beta_2)=1$ then $\chi(\beta_1)=-1$. Hence $Q_3 \notin \J[4](\F)$ and $Q_4 \in \J[4](\F)$. So  $ J[4](\F) = <Q_4,\mathcal{T}> $.
 \end{itemize}
 \end{itemize}
\end{description}
\begin{itemize}
\item[(ii)] If $\chi(\delta^2-\epsilon)=1$ then $r_1, r_2 \in \F$.
Since $\chi(r_1 r_2)=\chi(\epsilon)$ and $\chi(\epsilon)=1$, so both of the $r_1$ and $r_2$ are square or both of them not square. We need to check two cases, $\chi(-1)=1$ and $\chi(-1)=-1$.
\begin{description}
\item	\item[$\bullet$] Let $\chi(-1)=-1$. 
So, $\chi(\alpha_1 \alpha_2)=\chi(-\epsilon)=-1$ and $\chi(\beta_1 \beta_2)=\chi(-\epsilon(\delta^2-\epsilon))=-1$. 
So, either none of the $Q_3$ and $Q_4$ belongs to $\J[4](\F)$  or 
only one of them is  belongs to $\J[4](\F)$.
	\begin{itemize}
		\item[$-$]  If $\chi(r_1)=1$ then $\chi(r_2)=1$, so $Q_1,Q_2 \in J[4](\F)$. 
	In addation, $Q_3 , Q_4 \notin J[4](\F)$.  Since $J[4](\F)$ has a group stracture, so if $Q_3 \in J[4](\F)$ then $Q_1+Q_3=Q_4 \in J[4](\F) $ where is a contradiction. 
Also, if $Q_4 \in J[4](\F)$  then $Q_1+Q_4= \overline{Q}_3 \in J[4](\F)$
and again is a contradiction. So  $ J[4](\F) =<Q_1,\infty>=<Q_2,\infty> $.

\item[$-$] If $\chi(r_1)=-1$ then  $\chi(r_2)=-1$. So $Q_1,Q_2 \notin J[4](\F)$. 
Since $\chi(-\epsilon)=-1$, so either both of the $\sqrt{\epsilon}$ and $-\sqrt{\epsilon}$ are non-square or exactly one of them is square.
\begin{itemize}
\item[-] If $\chi(\sqrt{\epsilon})=\chi(-\sqrt{\epsilon})=-1$ then $Q_3,Q_4 \notin J[4](\F)$. So there is no point of order $4$ and $ J[4](\F)=<\mathcal{T},\infty> $. 
\item[-] If $\chi(\sqrt{\epsilon})=1$ then $\chi(-\sqrt{\epsilon})=-1$. So $Q_4 \notin J[4](\F)$. Now if $\chi(\beta_1)=-1$ then  $Q_3 \notin J[4](\F)$. So there is no point of order $4$ and $ J[4](\F)=<\mathcal{T},\infty> $. On the other hand, if $\chi(\beta_1)=1$ then  $Q_3 \in J[4](\F)$. So $ J[4](\F) = <Q_3,\mathcal{T}> $.
\item[-] If $\chi(-\sqrt{\epsilon})=1$ then $\chi(\sqrt{\epsilon})=-1$. So $Q_3 \notin J[4](\F)$. Now if $\chi(\beta_2)=-1$ then  $Q_4 \notin J[4](\F)$. So there is no point of order $4$ and $ J[4](\F)=<\mathcal{T},\infty> $. On the other hand, if $\chi(\beta_2)=1$ then  $Q_4 \in J[4](\F)$. So $ J[4](\F) = <Q_4,\mathcal{T}> $.
\end{itemize}
	\end{itemize}
	\item \item[$\bullet$] Let $\chi(-1)=1$. So $\chi(\alpha_1 \alpha_2)=\chi(-\epsilon)=1$ and $\chi(\beta_1 \beta_2)=\chi(-\epsilon(\delta^2-\epsilon))=1$. So $\chi(\sqrt{\epsilon})=\chi(-\sqrt{\epsilon})$ and $\chi(\beta_1)=\chi( \beta_2)$.  We need to check two cases, $\chi(r_1)=1$ and $\chi(r_1)=-1$. 
	\begin{itemize}
	\item[$-$]  If $\chi(r_1)=-1$ then  $\chi(r_2)=-1$. So $Q_1,Q_2 \notin J[4](\F)$. If $\chi(\sqrt{\epsilon})=-1$ then  $\chi(-\sqrt{\epsilon})=-1$.
So there is no point of order $4$ and  $ J[4](\F)=<\mathcal{T},\infty> $. On the other hand,
 if $\chi(\sqrt{\epsilon})=1$ then $\chi(-\sqrt{\epsilon})=1$. In this case $\chi(\beta_1)=\chi(\beta_2)=-1$ So there is no point of order $4$ and  $ J[4](\F)=<\mathcal{T},\infty> $. Note  that if  $\chi(\beta_1)=\chi(\beta_2)=1$ then  $Q_3,Q_4 \in J[4](\F)$. But $J[4](\F)$ has a group stracture, so  $Q_3+Q_4=Q_1 \in J[4](\F)$,  which is a contradiction. 
\item[$-$] If $\chi(r_1)=1$ then $\chi(r_2)=1$. Hence
	 $Q_1,Q_2 \in J[4](\F).$
	\begin{description}
	\item[-] If $\chi(\sqrt{\epsilon})=-1$ then $\chi(-\sqrt{\epsilon})=-1$. So $Q_3,Q_4 \notin J[4](\F)$.
So $ J[4](\F) =<Q_1,\infty>=<Q_2,\infty> $.
\item[-] If $\chi(\sqrt{\epsilon})=1$ then $\chi(-\sqrt{\epsilon})=1$. So $\chi(\alpha_1)=\chi(\alpha_2)=1$.  Since $\chi(r_1)=\chi(r_2)=1$,  we can write $\alpha_1 \beta_1=-(\sqrt{r_1}+\sqrt{r_2})^2$, so $\chi(\alpha_1 \beta_1) =\chi(-1)=1$.  Hence  $\chi(\alpha_1 ) =\chi(\beta_1)=1$ and so $\chi(\alpha_2) =\chi(\beta_2)=1$. 
So Jacobi quartic curve $\J_{\epsilon, \delta}$ has the full $4$-torsion subgroup, so $ J[4](\F)=J[4]= <Q_1,Q_3>$. \qed
    \end{description}
	\end{itemize}
\end{description}
\end{itemize}
\end{itemize}
\end{proof}

\begin{lemma}\label{LemJ3}
For Jacobi quartic curve $\J_{\epsilon,\delta}$ over the finite field $\F_q$ of odd characteristic, if $\chi(\epsilon)=-1$ then we have 
\begin{equation*}
\label{J2}
J[4](\F_q)= 
\begin{cases}
<\mathcal{T}>, &\text{if }  \chi(\delta^2-\epsilon)=-1,\\
<Q_1> or <Q_2>, & \text{if } \chi(\delta^2-\epsilon)=1.\\
\end{cases}
\end{equation*}
\end{lemma}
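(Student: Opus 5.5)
This lemma is the finite-field specialization of the earlier lemma treating the case that $\epsilon$ is a non-square. The plan is to rule out its middle case $\chi(\delta^2-\epsilon)=1$, $\chi(r_1)=\chi(r_2)=-1$ using the multiplicativity of $\chi$ on $\F_q^*$. As in that lemma, $\chi(\epsilon)=-1$ means $\sqrt{\epsilon}\notin\F_q$. Hence $\alpha_1,\alpha_2\notin\F_q$ and $Q_3,Q_4,\overline{Q}_3,\overline{Q}_4,\infty,\overline{\infty}$ are not $\F_q$-rational. The only candidates for points of order $4$ are therefore $\pm Q_1,\pm Q_2$, and $J[2](\F_q)=\{\cO,\mathcal{T}\}$ by Lemma~\ref{LemJ1}.

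The first case, $\chi(\delta^2-\epsilon)=-1$, is immediate. Then $\sqrt{\delta^2-\epsilon}\notin\F_q$, so $r_1,r_2\notin\F_q$, and no point of order $4$ is rational. This gives $J[4](\F_q)=J[2](\F_q)=\langle\mathcal{T}\rangle$.

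For the second case, $\chi(\delta^2-\epsilon)=1$, we have $r_1,r_2\in\F_q$. Both are nonzero, since $r_1r_2=\epsilon/\epsilon^2=1/\epsilon\neq0$. The key step is that over a finite field $\chi$ is a group homomorphism $\F_q^*\to\{\pm1\}$. Therefore $\chi(r_1)\chi(r_2)=\chi(r_1r_2)=\chi(1/\epsilon)=\chi(\epsilon)=-1$. It follows that exactly one of $\chi(r_1),\chi(r_2)$ equals $1$, and the case $\chi(r_1)=\chi(r_2)=-1$ from the general-field lemma cannot occur.

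So exactly one of $Q_1,Q_2$ is $\F_q$-rational. The points $\pm Q_1$ (or $\pm Q_2$) together with $\cO,\mathcal{T}$ then form $J[4](\F_q)$. Since $2Q_1=\mathcal{T}$ by Table~\ref{Tab:grp}, this group is cyclic of order $4$, equal to $\langle Q_1\rangle$ or $\langle Q_2\rangle$. Equivalently, one can simply invoke the third case of the preceding lemma.

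The only real point is the multiplicativity argument that eliminates the $\chi(r_1)=\chi(r_2)=-1$ case. Multiplicativity can fail over a general field, where a product of two non-squares need not be a square, and that is why this lemma is stated over $\F_q$. The rest of the argument is a direct reuse of the earlier lemma.
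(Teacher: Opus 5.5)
Your proof is correct and follows essentially the paper's argument. The decisive step in both is that $\chi(r_1)\chi(r_2)=\chi(r_1r_2)=\chi(\epsilon)=-1$ forces exactly one of $r_1,r_2$ to be a square, so $J[4](\F_q)$ is $\langle Q_1\rangle$ or $\langle Q_2\rangle$.

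The only cosmetic difference is in the case $\chi(\delta^2-\epsilon)=-1$. There the paper argues from the doubling formula that a rational point of order $4$ must have $y=0$, so that $x^2$ is an $\F_q$-root of $\epsilon u^2+2\delta u+1$, which forces $\delta^2-\epsilon$ to be a square. You instead read the same conclusion off the explicit list of $4$-torsion points.
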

\begin{proof}
Assume that Jacobi quartic curve $\J_{\epsilon,\delta}$ has a point of order 4. Since the points at infinity in the nonsingular model of the Jacobi curve are 2 torsion points, so there exist an affine point $P=(x,y)$ of order 4 and so, $2P \in J[2](\F_q)$. Since $\chi(\epsilon)=-1 $, so we have $2P=(0,1)$ or $(0,-1)$. From addition formula \eqref{eq:AdEJ},$xy=0$. If $x=0$ then $P$ equals $(0, \pm 1)$ which is a $2$- torsion point. So, $y=0$ and then we have $\epsilon x^4+2\delta x^2+1=0$. Thus, $\chi(\delta^2-\epsilon)=1$.
So, if the Jacobi quartic curve $\J_{\epsilon,\delta}$ over $\F_q$ of odd characteristic has a point of order 4  and $\chi(\epsilon)=-1$ then, we have $\chi(\delta^2-\epsilon)=1$.
 So, if $\chi(\delta^2-\epsilon)=-1$ then  there is no point of order $4$ and  
 $J[4](\F_q)=J[2](\F_q)=\{\cO,\mathcal{T} \}$. So $ J[4](\F_q) =<\mathcal{T}> $ . 

On the other hand, if  $\chi(\delta^2-\epsilon)=1$ then $r_1,r_2 \in \F_q$. Since $\chi(r_1r_2)=\chi(\epsilon)=-1$, so exactly one of the  $r_1$ and $r_2$ is square and the other is not square. So
\begin{equation*}
J[4]=\{\cO,\mathcal{T},\pm Q_1\},~~or~~ J[4]=\{\cO,\mathcal{T},\pm Q_2\}.
\end{equation*}
So,
$ J[4](\F_q) =<Q_1>$  or  $ J[4](\F_q) =<Q_2>$ . \qed
\end{proof}

\begin{lemma}\label{LemJ4}
For Jacobi quartic curve $\J_{\epsilon,\delta}$ over the finite field $\F_q$ of odd characteristic,  if $\chi(\epsilon)=1$ then we have

\begin{equation*}
\begin{array}{c}
\label{J4}
J[4](\F_q) = \\
$~$\\
\begin{cases}
<\mathcal{T},\infty >, &\text{if }     \chi(\delta^2- \epsilon)=-1, \chi(-1)=-1, \chi(\beta_1)=-1, \\
 <Q_{3},\mathcal{T}> or  <Q_{4},\mathcal{T}>, & \text{if }  \chi(\delta^2- \epsilon)=-1, \chi(-1)=-1, \chi(\beta_1)=1, \\
<\mathcal{T},\infty>, &\text{if }    \chi(\delta^2- \epsilon)=-1,\chi(-1)=1, \chi(\sqrt{\epsilon})=-1 ,  \\
<Q_{3},\mathcal{T}> or  <Q_{4},\mathcal{T}>, &\text{if }    \chi(\delta^2- \epsilon)=-1, \chi(-1)=1, \chi(\sqrt{\epsilon})=1 ,   \\
<Q_{1},\infty> =<Q_{2},\infty>,   &\text{if }   \chi(\delta^2- \epsilon)=1,\chi(-1)=-1,  \chi(r_1)=1,\\
<Q_{3},\mathcal{T}> or  <Q_{4},\mathcal{T}>, &\text{if }   \chi(\delta^2- \epsilon)=1,  \chi(-1)=-1,\chi(r_1)=-1,\\
<\mathcal{T},\infty>,  &\text{if }   \chi(\delta^2- \epsilon)=1, \chi(-1)=1,  \chi(r_1)=-1, \\
<Q_{1},\infty> =<Q_{2},\infty>, &\text{if }   \chi(\delta^2- \epsilon)=1, \chi(-1)=1, \chi(r_1)=1,  \chi(\alpha_1)=-1 , \\
J[4]=<Q_1,Q_3>, &\text{if }   \chi(\delta^2- \epsilon)=1, \chi(-1)=1, \chi(r_1)=1, \chi(\alpha_1)=1,

\end{cases}
\end{array}
\end{equation*}
\end{lemma}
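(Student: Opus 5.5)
The plan is to deduce this statement from the preceding lemma for a general field with $\chi(\epsilon)=1$. That lemma lists the possible groups $J[4](\F)$ in terms of $\chi(-1)$, $\chi(r_1)$, $\chi(\pm\sqrt{\epsilon})$ and $\chi(\beta_1),\chi(\beta_2)$. Over $\F_q$ we have one extra tool that fails over a general field: $\chi$ is a multiplicative homomorphism onto $\{\pm1\}$, so the product of two non-squares is a square. With it, several of the sub-cases of that lemma either merge or become impossible, and the remaining ones give exactly the table in the statement.

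\emph{Step 1: identities among the parameters.} Since $\alpha_1=1/\sqrt{\epsilon}$ and $\alpha_2=-1/\sqrt{\epsilon}$, we get $\chi(\alpha_1)=\chi(\sqrt{\epsilon})$ and $\chi(\alpha_1\alpha_2)=\chi(-1)$. From the definitions,
$$\beta_1\beta_2=4\Bigl(1-\frac{\delta^2}{\epsilon}\Bigr)=-\frac{4(\delta^2-\epsilon)}{\epsilon},\qquad\text{so}\qquad \chi(\beta_1\beta_2)=\chi(-1)\chi(\delta^2-\epsilon).$$
In addition, $r_1r_2=1/\epsilon$ and $r_1+r_2=-2\delta/\epsilon$. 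For either choice of sign,
$$\alpha_i\beta_i=\frac{2\delta}{\epsilon}\pm\frac{2}{\sqrt{\epsilon}}=-(r_1+r_2+2s),\qquad s=\pm\frac{1}{\sqrt{\epsilon}},\quad s^2=r_1r_2.$$

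\emph{Step 2: the case $\chi(\delta^2-\epsilon)=-1$.}
\begin{itemize}
\item If $\chi(-1)=-1$, then $\chi(\beta_1\beta_2)=1$, so $\chi(\beta_1)=\chi(\beta_2)$. Exactly one of $\alpha_1,\alpha_2$ is a square. Hence a point of order $4$ exists if and only if $\chi(\beta_1)=1$, and it is $Q_3$ or $Q_4$ according to which $\alpha_i$ is the square. The three $\chi(-1)=-1$ sub-cases of the general lemma therefore collapse to the two rows of the statement.
\item If $\chi(-1)=1$, then $\chi(\beta_1\beta_2)=-1$, so exactly one $\beta_i$ is a square. Consequently, whenever $\chi(\sqrt{\epsilon})=1$ (so both $\alpha_i$ are squares), we get $\langle Q_3,\mathcal{T}\rangle$ or $\langle Q_4,\mathcal{T}\rangle$. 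The general lemma's option $\chi(\beta_1)=\chi(\beta_2)=-1$ cannot occur.
\end{itemize}

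\emph{Step 3: the case $\chi(\delta^2-\epsilon)=1$, so $r_1,r_2\in\F_q$.}
\begin{itemize}
\item The rows with $\chi(r_1)=1$, and the row with $\chi(-1)=1$, $\chi(r_1)=-1$, transfer directly from the general lemma. In those rows I only rewrite the condition $\chi(\sqrt{\epsilon})$ as $\chi(\alpha_1)$.
\item The new content is the row $\chi(-1)=-1$, $\chi(r_1)=-1$, where the statement claims a point of order $4$ always exists. Here $\chi(r_2)=-1$ as well. Take square roots $\sqrt{r_1},\sqrt{r_2}$ in $\F_{q^2}$; since both $r_i$ are non-squares in $\F_q$, each $\sqrt{r_i}$ lies in $\F_{q^2}\setminus\F_q$ and satisfies $\sqrt{r_i}^{\,q}=-\sqrt{r_i}$. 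Hence $t=\sqrt{r_1}\sqrt{r_2}$ is fixed by Frobenius, so $t\in\F_q$ with $t^2=r_1r_2$, i.e.\ $t=\pm 1/\sqrt{\epsilon}$, and both sign choices of $s$ in Step 1 are realised by the two choices of $\sqrt{r_2}$.
\item With this, $\sqrt{r_1}+\sqrt{r_2}=\sqrt{r_1}\,(1+t/r_1)$. Therefore $r_1+r_2+2s=(\sqrt{r_1}+\sqrt{r_2})^2=r_1c^2$ with $c=1+t/r_1\in\F_q$. Then $c\neq0$, since otherwise $r_1=-t$ would give $r_1^2=t^2=r_1r_2$, i.e.\ $r_1=r_2$, hence $\delta^2-\epsilon=0$.
\item It follows that $\chi(\alpha_i\beta_i)=\chi(-1)\chi(r_1)=1$ for $i=1,2$. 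Since $\chi(-1)=-1$, exactly one $\alpha_i$ is a square, and the matching $\beta_i$ is then also a square. So $Q_3$ or $Q_4$ is $\F_q$-rational, and by the general lemma $J[4](\F_q)=\langle Q_3,\mathcal{T}\rangle$ or $\langle Q_4,\mathcal{T}\rangle$.
\end{itemize}

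I expect this last sub-case to be the main obstacle. The point is to show that $\chi(\beta_i)$ is forced by the other characters, which needs the square-root identity and the observation that $\sqrt{r_1}\sqrt{r_2}\in\F_q$ when both $r_i$ are non-squares. Care with the sign choices of $\sqrt{\epsilon}$ and $\sqrt{r_2}$ is the delicate part; everything else is bookkeeping against the general-field lemma.
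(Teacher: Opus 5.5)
Your proof is correct, but it is organized differently from the paper's. The paper never cites the general-field lemma. It reruns the entire case split over $\F_q$ from scratch, using multiplicativity of $\chi$ together with group-law contradictions inside the proof; for example, it uses $Q_1+Q_3=Q_4$ to exclude $Q_3,Q_4$ when $Q_1$ is rational and $\chi(-1)=-1$.

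Using that lemma as a black box, as you do, is shorter and isolates exactly where finiteness of the field matters. It collapses sub-cases when $\chi(\delta^2-\epsilon)=-1$. (Part (i) of that lemma actually has five sub-cases with $\chi(-1)=-1$, not three, but your direct argument covers them all.) It also forces a rational point of order $4$ when $\chi(\delta^2-\epsilon)=1$ and $\chi(-1)=\chi(r_1)=-1$.

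In that key sub-case the paper stays inside $\F_q$. Since $\chi(-r_1)=\chi(-r_2)=1$, it writes $\alpha_1\beta_1=(\sqrt{-r_1}+\sqrt{-r_2})^2$ with $\sqrt{-r_i}\in\F_q$. Your detour through $\F_{q^2}$ and Frobenius is avoidable for a further reason. For any $s$ with $s^2=r_1r_2$ one has $r_1+r_2+2s=(r_1+s)^2/r_1$ identically, and $s=\pm 1/\sqrt{\epsilon}\in\F_q$ is available from the outset (your $c$ is just $(r_1+s)/r_1$).

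This gives $\chi(\alpha_i\beta_i)=\chi(-r_1)$ for $i=1,2$ whenever $\chi(\delta^2-\epsilon)=1$. That single fact decides whether $Q_3$ and $Q_4$ are rational in all five rows with $\chi(\delta^2-\epsilon)=1$. It therefore makes unnecessary the group-law contradiction arguments that both the paper and the general lemma use in the two rows with $\chi(-r_1)=-1$.
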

\begin{proof}
Since $\chi(\epsilon)=1$, so Jacobi quartic curve $\J_{\epsilon,\delta}$ has a full 2-torsion subgroup as 
\begin{equation*}
J[2](\F_q)=\{\cO,\mathcal{T},\infty,\overline{\infty} \}.
\end{equation*}
We knowe that, if Jacobi quartic curve $\J_{\epsilon,\delta}$  has a point $P$ of order $4$ then  $2P \in J[2](\F_q)$ and  
\begin{equation*}
P \in \{(\pm \sqrt{r_1} , 0 ),(\pm \sqrt{ r_2}, 0 ),(\pm \sqrt{\alpha_1}, \pm \sqrt{\beta_1} ), (\pm \sqrt{\alpha_2}, \pm \sqrt{ \beta_2} )\}.
\end{equation*}
We have also
\begin{equation*}
\chi(r_1r_2)=\chi(\epsilon)~~,~~\chi(\alpha_1 \alpha_2)=\chi(-\epsilon)~~,~~\chi(\beta_1\beta_2)=\chi(\epsilon(\epsilon-\delta^2)).
\end{equation*}

\noindent{\textbf{Case $\delta^2-\epsilon$ quadratic non-residue:}}
If $\chi(\delta^2-\epsilon)=-1$ then  $r_1, r_2 \notin \F_q $, so  $Q_1 , Q_2 \notin \J[4](\F_q) $. We need to check two cases, $\chi(-1)=1$ and $\chi(-1)=-1$.

\begin{description}
 \item[$\bullet$] Let $\chi(-1)=-1$. Since  $ \chi(\delta^2- \epsilon)=-1$ and $\chi(-1)=-1$ we have  $\chi(\beta_1 \beta_2)=\chi(\epsilon(\epsilon- \delta^2))=1$.
\begin{itemize}
\item  If $\chi(\beta_1)=-1$ then  $\chi(\beta_2)=-1$.  So there is no point of order $4$ and 
 $J[4](\F_q)=J[2](\F_q)$. So $ J[4](\F_q) = <\mathcal{T},\infty> $.
\item If $\chi(\beta_1)=1$ then  $\chi(\beta_2)=1$. Also we have $\chi(\alpha_1 \alpha_2)=\chi(-\epsilon)=-1$, so one of the $\alpha_1$ and $\alpha_2$ is square and other is not. So,  
$Q_3 \in J[4](\F_q)$ ~or~$Q_4 \in J[4](\F_q)$.
 Hence $ J[4](\F_q) =<Q_{3},\mathcal{T}>$ or $ J[4](\F_q) =<Q_{4},\mathcal{T}>$ .
\end{itemize}
\item \item[$\bullet$] Let $\chi(-1)=1$. Since  $ \chi(\delta^2- \epsilon)=-1$ and $\chi(-1)=1$ we have  $\chi(\beta_1 \beta_2)=-1$.  Also we have $\chi(\alpha_1 \alpha_2)=\chi(-\epsilon)=1$. 
\begin{itemize}
	\item  If $\chi(\alpha_1)=-1$  then $\chi( \alpha_2)=-1$.  So there is no point of order $4$ and $J[4](\F_q)=J[2](\F_q)$. So $ J[4](\F_q) =<\mathcal{T},\infty> $.
	\item If $\chi(\alpha_1)=1$ then $\chi(\alpha_2)=1$. Also, $\chi(\beta_1 \beta_2)=-1$, so one of the 
$\beta_1$ and $  \beta_2$ is square and other is not.
So, 
$Q_3 \in J[4](\F_q)$ ~or~$Q_4 \in J[4](\F_q)$.
 Hence $ J[4](\F_q) =<Q_{3},\mathcal{T}>$ or $ J[4](\F_q) =<Q_{4},\mathcal{T}>$ .
 \end{itemize}
\end{description}
 \noindent{\textbf{Case $\delta^2-\epsilon$ quadratic residue:}} If $\chi(\delta^2-\epsilon)=1$ then  $r_1, r_2 \in \F_q $.
Since $\chi(r_1 r_2)=\chi(\epsilon)$ and $\chi(\epsilon)=1$, so both of the $r_1$ and $r_2$ are square or both of them not square. We need to check two cases, $\chi(-1)=1$ and $\chi(-1)=-1$.
\begin{description}
	\item[$\bullet$] Let $\chi(-1)=-1$. 
So, $\chi(\alpha_1 \alpha_2)=\chi(-\epsilon)=-1$ and $\chi(\beta_1 \beta_2)=\chi(\epsilon(\epsilon-\delta^2))=-1$. 
 So, $\chi(\alpha_1 \beta_1) =\chi(\alpha_2 \beta_2)$. 
	\begin{itemize}
		\item  If $\chi(r_1)=1$ then $\chi(r_2)=1$, so
		$Q_1,Q_2 \in J[4](\F_q)$.
In addation, if $\chi(\alpha_1 \beta_1) =1$ then $\chi(\alpha_2 \beta_2) =1$,  and  exactly one of the $Q_3$ and $Q_4$
 is belong to $J[4](\F_q)$ and other is not.
But $J[4](\F_q)$ has a group stracture, so if $Q_3 \in J[4](\F_q)$
 then $Q_1+Q_3=Q_4 \in J[4](\F_q)$
  where is a contradiction. 
Also, if $Q_4 \in J[4](\F_q)$ 
then $Q_1+Q_4= \overline{Q}_3 \in J[4](\F_q)$
and again is a contradiction. Hence, in this case $\chi(\alpha_1 \beta_1) = \chi(\alpha_2 \beta_2) = -1$ . So  $ J[4](\F_q) =<Q_1,\infty>=<Q_2,\infty> $.

		\item If $\chi(r_1)=-1$ then  $\chi(r_2)=-1$. So $\chi(-r_1)=\chi(-r_2)=1$. Then we can write $\alpha_1 \beta_1=(\sqrt{-r_1}+\sqrt{-r_2})^2$, so $\chi(\alpha_1 \beta_1) =1$.  Then  exactly one of the 
$Q_3$ and $Q_4$ is belong to $J[4](\F_q)$ and other is not. So   $ J[4](\F_q) =<Q_3,\mathcal{T}>$ or $ J[4](\F_q) =<Q_4,\mathcal{T}> $. 
	\end{itemize}
	\item[$\bullet$] Let $\chi(-1)=1$. So, $\chi(\alpha_1 \alpha_2)=\chi(-\epsilon)=1$ and $\chi(\beta_1 \beta_2)=\chi(\epsilon(\epsilon-\delta^2))=1$. 
	\begin{itemize}
	\item  If $\chi(r_1)=-1$ then  $\chi(r_2)=-1$. Hence $Q_1,Q_2 \notin J[4](\F_{q}).$ 
\begin{description} 
\item If $\chi(\beta_1)=-1$ then $\chi(\beta_2)=-1$. So there is no point of order $4$ and  $J[4](\F_q)=J[2](\F_q)$. So $ J[4](\F_q)=<\mathcal{T},\infty> $.
\item If $\chi(\beta_1)=1$ then $\chi(\beta_2)=1$. So, If $\chi(\alpha_1)=\chi(\alpha_2)=-1$ then there is no point of order $4$ and  $J[4](\F_q)=J[2](\F_q)$. So $ J[4](\F_q) =<\mathcal{T},\infty> $. On the other hand,  If $\chi(\alpha_1)=\chi(\alpha_2)=1$ then  
$Q_3,Q_4 \in J[4](\F_q)$.
But $J[4](\F_q)$ has a group stracture, so 
$Q_3+Q_4=Q_1 \in J[4](\F_q)$.
 which is a contradiction. So, in this case,  if $\chi(\beta_1)=1$ then $\chi(\alpha_1)=\chi(\alpha_2)=-1$, and so,  $ J[4](\F_q) =<\mathcal{T},\infty> $.
 \end{description}
	\item If $\chi(r_1)=1$ then $\chi(r_2)=1$. Hence
	 $Q_1,Q_2 \in J[4](\F_{q}).$
	\begin{description}
	\item  If $\chi(\alpha_1)=-1$ then $\chi(\alpha_2)=-1$. So $ J[4](\F_q) =<Q_1,\infty>=<Q_2,\infty> $.
	\item If $\chi(\alpha_1)=1$ then $\chi(\alpha_2)=1$. We know that $\chi(\beta_1 \beta_2)=1$, so $\chi(\beta_1)=\chi(\beta_2)$. Since $\chi(r_1)=\chi(r_2)=1$,  we can write $\alpha_1 \beta_1=-(\sqrt{r_1}+\sqrt{r_2})^2$, so $\chi(\alpha_1 \beta_1) =\chi(-1)=1$.  Hence  $\chi(\alpha_1 ) =\chi(\beta_1)=1$ and so $\chi(\alpha_2) =\chi(\beta_2)=1$. 
So Jacobi quartic curve $\J_{\epsilon, \delta}$ has the full $4$-torsion subgroup, so $ J[4](\F_q)=J[4]= <Q_1,Q_3>$. \qed
    \end{description}
	\end{itemize}
\end{description}
\end{proof}
\begin{corollary}
For Jacobi quartic curve $\J_{\epsilon,\delta}$ over the finite field $\F_q$ of odd characteristic, we have
\begin{equation*}
\label{J4}
J[4](\F_q)\cong
\begin{cases}
\mathbb{Z}_2, &\text{if }  \chi(\epsilon)=-1, \chi(\delta^2- \epsilon)=-1,\\
\mathbb{Z}_4, &\text{if }   \chi(\epsilon)=-1 , \chi(\delta^2- \epsilon)=1, \\
\mathbb{Z}_2\times \mathbb{Z}_2, &\text{if }   \chi(\epsilon)=1 , \chi(\delta^2- \epsilon)=-1,  \chi(-1)=-1, \chi(\beta_1)=-1, \\
\mathbb{Z}_2\times \mathbb{Z}_2, &\text{if }   \chi(\sqrt{\epsilon})=-1 , \chi(\delta^2- \epsilon)=-1,\chi(-1)=1,\\
\mathbb{Z}_2\times \mathbb{Z}_2, &\text{if }   \chi(\epsilon)=1 , \chi(\delta^2- \epsilon)=1, \chi(-1)=1,  \chi(r_1)=-1,  \\
\mathbb{Z}_2\times \mathbb{Z}_4, &\text{if }   \chi(\epsilon)=1 , \chi(\delta^2- \epsilon)=-1, \chi(-1)=-1, \chi(\beta_1)=1, \\
\mathbb{Z}_2\times \mathbb{Z}_4, &\text{if }   \chi(\sqrt{\epsilon})=1 , \chi(\delta^2- \epsilon)=-1,  \chi(-1)=1, \\
\mathbb{Z}_2\times \mathbb{Z}_4, &\text{if }  \chi(\sqrt{\epsilon})=-1 , \chi(\delta^2- \epsilon)=1,  \chi(-1)=1,\chi(r_1)=1, \\
\mathbb{Z}_2\times \mathbb{Z}_4, &\text{if }   \chi(\epsilon)=1 , \chi(\delta^2- \epsilon)=1,  \chi(-1)=-1, \\
\mathbb{Z}_4\times \mathbb{Z}_4, &\text{if }   \chi(\sqrt{\epsilon})=1 , \chi(\delta^2- \epsilon)=1, \chi(-1)=1,   \chi(r_1)=1, \chi(\beta_1)=1. 
\end{cases}
\end{equation*}
\end{corollary}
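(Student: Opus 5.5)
The plan is to read the Corollary off Lemma~\ref{LemJ3} (the finite-field case $\chi(\epsilon)=-1$) and Lemma~\ref{LemJ4} (the case $\chi(\epsilon)=1$). Each generator listed in those lemmas gets replaced by the abstract isomorphism type of the group it generates. The only facts needed are the orders of the named points: $\mathcal{T},\infty,\overline{\infty}$ have order $2$, and $Q_1,Q_2,Q_3,Q_4$ have order $4$. These orders are known from the computation of $\J_{\epsilon,\delta}[4]$, since $2Q_1=2Q_2=\mathcal{T}$ and $2Q_3=\infty$, $2Q_4=\overline{\infty}$ by Table~\ref{Tab:grp}. I will also read off from Table~\ref{Tab:grp} which pairs of generators intersect trivially.

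\textbf{Case $\chi(\epsilon)=-1$.} Lemma~\ref{LemJ3} gives $J[4](\F_q)=\langle\mathcal{T}\rangle\cong\Z_2$ when $\chi(\delta^2-\epsilon)=-1$. It gives $\langle Q_1\rangle$ or $\langle Q_2\rangle$ when $\chi(\delta^2-\epsilon)=1$, and this is cyclic of order $4$, i.e.\ $\Z_4$. These are the first two lines of the Corollary.

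\textbf{Case $\chi(\epsilon)=1$.} The group $J[2](\F_q)=\langle\mathcal{T},\infty\rangle\cong\Z_2\times\Z_2$ is always contained in $J[4](\F_q)$. So every line of Lemma~\ref{LemJ4} whose answer is $\langle\mathcal{T},\infty\rangle$ yields $\Z_2\times\Z_2$.
\begin{itemize}
\item For $\langle Q_3,\mathcal{T}\rangle$: $Q_3$ has order $4$ and $\mathcal{T}\notin\langle Q_3\rangle=\{\cO,Q_3,\infty,-Q_3\}$, so the group is $\Z_4\times\Z_2$. The same argument applies to $\langle Q_4,\mathcal{T}\rangle$ with $2Q_4=\overline{\infty}$.
\item For $\langle Q_1,\infty\rangle$: $\langle Q_1\rangle=\{\cO,Q_1,\mathcal{T},-Q_1\}$ does not contain $\infty$, giving $\Z_2\times\Z_4$.
\item For $\langle Q_1,Q_3\rangle=J[4]$: the full $4$-torsion is $\Z_4\times\Z_4$ because $\ch\F_q\ne2$.
\end{itemize}

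The remaining work is bookkeeping: match each hypothesis line of the Corollary with the corresponding line of Lemma~\ref{LemJ4}. Some lines of the Corollary use $\chi(\sqrt{\epsilon})$ where the lemma uses $\chi(\alpha_1)$; these agree because $\alpha_1=1/\sqrt{\epsilon}$, so $\chi(\alpha_1)=\chi(\sqrt{\epsilon})$. Lines stated with $\chi(\sqrt{\epsilon})$ implicitly assume $\chi(\epsilon)=1$.

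For the $\Z_2\times\Z_4$ line with $\chi(\delta^2-\epsilon)=1$ and $\chi(-1)=-1$, I merge two lemma cases. If $\chi(r_1)=1$ the lemma gives $\langle Q_1,\infty\rangle$, and if $\chi(r_1)=-1$ it gives $\langle Q_3,\mathcal{T}\rangle$ or $\langle Q_4,\mathcal{T}\rangle$; both are $\Z_2\times\Z_4$.

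Similarly, in the $\chi(\delta^2-\epsilon)=-1$, $\chi(-1)=1$ branch, the lemma's dichotomy on $\chi(\alpha_1)$ becomes the dichotomy on $\chi(\sqrt{\epsilon})$. In the full-torsion line, the extra condition $\chi(\beta_1)=1$ is implied by (and consistent with) the lemma's derivation $\chi(\alpha_1\beta_1)=1$.

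The main obstacle is not mathematical but this reconciliation. I must make sure every case of Lemma~\ref{LemJ4} is covered exactly once and that the differently phrased conditions in the Corollary are equivalent to those of the lemma. In particular, the line $\chi(\sqrt{\epsilon})=-1$, $\chi(\delta^2-\epsilon)=1$, $\chi(-1)=1$, $\chi(r_1)=1$ corresponds to the lemma's $\chi(\alpha_1)=-1$. I would verify this using $\chi(\alpha_1)=\chi(\sqrt\epsilon)$.
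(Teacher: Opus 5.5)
Your proposal is correct and follows essentially the paper's own route. The paper gives no separate proof of the Corollary; it is read off from the two finite-field lemmas (the $\chi(\epsilon)=-1$ lemma and Lemma~\ref{LemJ4}) by identifying each listed subgroup up to isomorphism, exactly as you do. Your case matching is accurate, including merging the two $\chi(\delta^2-\epsilon)=1$, $\chi(-1)=-1$ subcases into $\mathbb{Z}_2\times\mathbb{Z}_4$ and translating conditions via $\chi(\alpha_1)=\chi(\sqrt{\epsilon})$.
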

If Jacobi quartic curve $ \J_{\epsilon,\delta}$ has a full $4$-torsion group then whose coset with respect to $P=(x,y)$ is as fallow:
\begin{equation*}
\begin{array}{l}
J[4]+P=\\
\{(x,y),(-x,-y),(1/\sqrt{\epsilon}x,-y/\sqrt{\epsilon}x^2),(-1/\sqrt{\epsilon}x,y/\sqrt{\epsilon}x^2),
(x_1,y_1),(-x_1,-y_1),\\
(x_2,y_2),(-x_2,-y_2),(x_3,y_3),(-x_3,-y_3), (x_4,y_4),(-x_4,-y_4),
(\bar{x}_3,\bar{y}_3),(-\bar{x}_3,-\bar{y}_3),\\
(\bar{x}_4,\bar{y}_4),(-\bar{x}_4,-\bar{y}_4)
 \}.
\end{array}
\end{equation*}
where 
\begin{equation*}
\begin{array}{l}
(x_1,y_1)=P+Q_1=(\frac{\sqrt{r_1}y}{1-\epsilon r_1\, x^2},\frac{2\sqrt{r_1(\delta^2-\epsilon)}x}{1-\epsilon r_1\, x^2}),\ \\
\vspace*{3pt}\\
(x_2,y_2)=P+Q_2=(\frac{\sqrt{r_2}y}{1-\epsilon r_2\, x^2},\frac{2\sqrt{r_2(\delta^2-\epsilon)}x}{1-\epsilon r_2\, x^2}) ,\\
\vspace*{3pt}\\
(x_3,y_3)=P+Q_3=(\frac{\sqrt{\beta_1}\,x+\sqrt{\alpha_1}\,y}{1-\sqrt{\epsilon}\, x^2},-\sqrt{\alpha_1\beta_1}
\frac{(1+\sqrt{\epsilon} \,x^2)}{\sqrt{\beta_1}\,x-\sqrt{\alpha_1}\,y}) ,\ \\
\vspace*{3pt}\\
(x_4,y_4)=P+Q_4=(\frac{\sqrt{\beta_2}\,x+\sqrt{\alpha_2}\,y}{1+\sqrt{\epsilon}\, x^2},-\sqrt{\alpha_2\beta_2}
\frac{(1-\sqrt{\epsilon} \,x^2)}{\sqrt{\beta_2}\,x+\sqrt{\alpha_2}\,y}) , \\
\vspace*{3pt}\\
(\bar{x}_3,\bar{y}_3)=P-Q_3=(\frac{\sqrt{\beta_1}\,x-\sqrt{\alpha_1}\,y}{1-\sqrt{\epsilon}\, x^2},\sqrt{\alpha_1\beta_1}
\frac{(1+\sqrt{\epsilon} \,x^2)}{\sqrt{\beta_1}\,x+\sqrt{\alpha_1}\,y}) ,\ \\
\vspace*{3pt}\\
(\bar{x}_4,\bar{y}_4)=P-Q_4=(\frac{\sqrt{\beta_2}\,x-\sqrt{\alpha_2}\,y}{1+\sqrt{\epsilon}\, x^2},\sqrt{\alpha_2\beta_2}
\frac{(1-\sqrt{\epsilon} \,x^2)}{\sqrt{\beta_2}\,x-\sqrt{\alpha_2}\,y}). \\
\end{array}
\end{equation*}

\begin{proposition}\label{JQTE}
Let $\F_q$ be a field with odd characteristic and $\epsilon,\delta \in \F_q$ such that $\epsilon(\delta^2-\epsilon)\ne 0$. If  $\chi(\delta^2-\epsilon) =1$ then  the subfamily of Jacobi curves $\J_{\epsilon,\delta}$  is the family of all elliptic curves over $\F_q$ where either the curve or its non-trivial quadratic twist has a $\F_q$-rational point of order 4. 
\end{proposition}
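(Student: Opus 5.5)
The plan is to work with the Weierstrass model having the relevant $2$-torsion point at the origin, $W_{a,b}: y^2=x(x^2+ax+b)$, and to reduce the statement to a simple halving criterion for the point $(0,0)$ on that model. Two facts are used throughout. First, by the Billet–Joye correspondence \cite{BJ} (equivalently, by composing $\zeta$ with the translation $x\mapsto x-2\delta$ on $\E_{\epsilon,\delta}$), $\J_{\epsilon,\delta}$ is $\F_q$-isomorphic to some $W_{a,b}$ with
\[
\delta=-\tfrac{a}{4}k,\qquad \epsilon=\tfrac{a^2-4b}{16}k^2
\]
for a nonzero scaling constant $k$. The scaling $k$ (and any extra factor) must be checked by a routine computation. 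Granting it, $\delta^2-\epsilon=\tfrac{b}{16}k^2$, so $\chi(\delta^2-\epsilon)=\chi(b)$. The image of $\mathcal{T}$ is $(0,0)$. Second, the quadratic twist of $W_{a,b}$ by a non-square $d$ is $W_{da,d^2b}$. The condition ``$b$ is a square'' is invariant under twisting.

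\textbf{Halving criterion.} Suppose $b=s^2$ with $s\in\F_q$. I will show that $(0,0)=2P$ for some $P\in W_{a,b}(\F_q)$ if and only if $a+2s$ or $a-2s$ is a square. Using the tangent line through $(0,0)$, the points $P$ with $2P=(0,0)$ have $x(P)^2=b$, so $x(P)=\pm s$. Then
\[
y(P)^2=\pm s\,(2b\pm as)=s^2(a\pm 2s).
\]
The two cases are linked by $(a+2s)(a-2s)=a^2-4b$. If $b$ is a non-square, then $x(P)\notin\F_q$ and no rational halving exists.

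\textbf{Forward direction.} Assume $\chi(\delta^2-\epsilon)=1$. Then $b=s^2$ is a square. If $a+2s$ or $a-2s$ is a square, the halving criterion produces a rational $P$ with $2P=(0,0)$. This $P$ has order $4$ on $\J_{\epsilon,\delta}$, since $(0,0)$ corresponds to $\mathcal{T}$. Otherwise both $a\pm2s$ are non-squares, and we pass to the twist by a non-square $d$. On $W_{da,d^2b}$ the square root of the constant term is $ds$, and $da\pm 2ds=d(a\pm2s)$ is now a square. So the twist has a rational point of order $4$. As a consistency check, this matches Lemma~\ref{LemJ4} and the Corollary, where the $\mathbb{Z}_4$ and $\mathbb{Z}_2\times\mathbb{Z}_4$ cases with $\chi(\delta^2-\epsilon)=1$ appear.

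\textbf{Converse.} Let $E/\F_q$ be an elliptic curve such that $E$ or its twist $E'$ has a rational point $P$ of order $4$. Say $E'$ does, the other case being identical. Put $T=2P$ at the origin, so that $E'\cong W_{a,b}$ with $(0,0)=2P$. By the halving criterion, $b$ is a square. Billet–Joye then gives $E'\cong\J_{\epsilon,\delta}$ with $\chi(\delta^2-\epsilon)=\chi(b)=1$. Since $E$ is the twist $W_{da,d^2b}$, and $d^2b$ is still a square, the same argument gives $E\cong\J_{\epsilon',\delta'}$ with $\chi(\delta'^2-\epsilon')=1$. The non-degeneracy condition $\epsilon(\delta^2-\epsilon)\neq0$ follows from $b\neq0$ and $a^2-4b\neq0$, which hold by non-singularity.

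The main obstacle is bookkeeping rather than ideas. I need to pin down the exact Billet–Joye formulas, including the constants relating $(\epsilon,\delta)$ to $(a,b)$, so that $\delta^2-\epsilon$ is \emph{exactly} a square multiple of $b$. If the constants come out differently, I would instead verify directly from the map $\zeta$ and the translation $x\mapsto x-2\delta$ that the product of the two differences of $2$-torsion abscissae at $\mathcal{T}$ equals $4(\delta^2-\epsilon)$. This product plays the role of $b$, and the argument above then goes through unchanged.
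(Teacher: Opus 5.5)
Your proof is correct, but it takes a different route from the paper's.

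The paper stays on the quartic side. Writing $\delta^2-\epsilon=f^2$, it factors $\epsilon x^4+2\delta x^2+1=(1-dx^2)(1-ax^2)$ with $d=-\delta-f$ and $a=-\delta+f$. It then uses the explicit isomorphism $(x,y)\mapsto(x,y/(1-dx^2))$ onto the twisted Edwards curve $\TE_{a,d}$. Conversely, every $\TE_{a,d}$ arises this way via $\epsilon=ad$, $\delta=-(a+d)/2$, which gives $\delta^2-\epsilon=(a-d)^2/4$. The conclusion then rests on a known fact that the paper does not re-prove: twisted Edwards (equivalently Montgomery) curves over $\F_q$ are exactly the curves for which the curve or its quadratic twist has a rational point of order $4$.

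You instead pass to $y^2=x(x^2+ax+b)$ with $\mathcal{T}\mapsto(0,0)$ and read off $\chi(\delta^2-\epsilon)=\chi(b)$. You then prove that characterization yourself, using the tangent-line halving criterion for $(0,0)$ together with the twist $W_{da,d^2b}$. Your argument is self-contained. It also makes explicit something the paper leaves implicit in the converse: the $2$-torsion point $2P$ must be the one sent to $\mathcal{T}$. This matters because a model $\J_{\epsilon,\delta}$ with $\chi(\delta^2-\epsilon)=-1$ can still carry a rational point of order $4$ doubling to $\infty$; see the $\mathbb{Z}_2\times\mathbb{Z}_4$ cases with $\chi(\delta^2-\epsilon)=-1$ in the Corollary after Lemma~\ref{LemJ4}. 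The paper's route, in exchange, produces the explicit twisted Edwards isomorphism, which it reuses in Proposition~\ref{P:J5}.

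One arithmetic slip. The paper's birational map to $\E_{\epsilon,\delta}$, followed by the translation moving $(-2\delta,0)$ to the origin, gives exactly $a=-4\delta$ and $b=4(\delta^2-\epsilon)$, which agrees with your fallback computation. So your parametrization yields $\delta^2-\epsilon=\tfrac{b}{4}k^2$ rather than $\tfrac{b}{16}k^2$. This is harmless for $\chi$.
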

\begin{proof}
If $\chi(\delta ^2-\epsilon)=1$ then for some $f$ in $\F_q$ we have $\delta ^2-\epsilon=f^2$ and so, $\epsilon=(\delta -f)(\delta +f)$. If we set $d=-\delta -f$ and $a=-\delta +f$ then we can write $\J_{\epsilon,\delta }$ by  $y^2=(1-dx^2)(1-ax^2)$, where $\epsilon=ad$ and $\delta =-(a+d)/2$.
		So, Jacobi quartic curve $y^2=(1-dx^2)(1-ax^2)$ is isomorphic to  twisted Edwards Curve $\TE_{a,d}:au^2+v^2=1+du^2v^2$  under the transformations
	
		\begin{equation}
			\label{eq:TJtoJq}
			\phi(x,y)=(x,\dfrac{y}{1-dx^2})\quad ,\quad \phi^{-1}(u,v)=(u,v(1-du^2)).	
			\end{equation}

So, the subfamily of Jacobi curves $\J_{\epsilon,\delta}$ for all $\epsilon,\delta \in \F_q$ with $\chi(\delta^2-\epsilon) =1$ is the family of all elliptic curves over $\F_q$ where either the curve or its  non-trivial quadratic twist has a $\F_q$-rational point of order 4. 
\qed
\end{proof}
\section{Differential addition}
\label{sec-dadd}

A Montgomery curve $\M_{A,B}$ over a field $\F$ is given by the equation $$\M_{A,B} : By^2=x^3+Ax^2+x,$$ where $A,B$ are in $\F$ and $B(A^2-4) \ne 0$  \cite{Mo}. In Montgomery curves, the special formulas for addition and doubling is done with the $x$-coordinate of a point. Let $P_1, P_2 \in \M_{A,B}(\F)$ with $P_1\ne \pm P_2$, $2P_1\ne \cO, P_2 \ne \cO$, where $\cO$ is the point at infinity on $\M_{A,B}$. Let $x_0=x(P_2-P_1)$, $x_1=x(P_1)$, $x_2=x(P_2)$, $x_3=x(P_1+P_2)$ and $x_4=x(2P_1)$. We have 
\begin{gather*}\label{eq:Mo}
x_4	= \dfrac{(x_1^2-1)^2 }{4x_1((x_1+1)^2+(A-2)x_1)}~~~~~,~~~~x_3x_0 = \dfrac{(x_1x_2-1)^2}{(x_1-x_2)^2}~.
\end{gather*}
Since field inversion is costly, practically computations are performed where points are
represented in projective coordinates. Therefore, $x(P)$ 
is represented by $(X:Z)$ where $x(P)=X/Z$ if $P$ is an affine point and $x(P)$ is represented by $(1:0)$ if $P=\cO$. The projective differential 
addition and doubling is given as follows. 
\begin{gather*}\label{eq:MoP}
X_3=Z_0~(X_1X_2-Z_1Z_2)^2, \qquad \qquad \quad \; Z_3=X_0~(X_1Z_2-X_2Z_1)^2,\\
X_4=(X_1^2-Z_1^2)^2, \qquad Z_4=4X_1Z_1(~(X_1+Z_1)^2+(A-2)X_1Z_1~).
\end{gather*}
The cost of projective $x$-coordinate differential addition and doubling formulas for Montgomery curves $\M_{A,B}$ is $6\M+4\s+1\D$, where a multiplication in $\F$ costs one $\M$, a squaring costs one $\s$ and the cost of field multiplication by a parameter is denoted by $\D$. The $x$-coordinate of the fixed base point $P$ can be represented by $x(P)=(X : Z)$ with $Z=1$, then the differential addition and doubling formulas are computed using $5\M+4\s+1\D$. 

The Montgomery ladder performs the scalar multiplication $kP$ for a given integer $k$ and a given point $P$ on elliptic curve $E$. Here, the $x$-coordinates of the points $2mP$ and $(2m+1)P$ are computed from $x$-coordinates of the points $mP$, $(m+1)P$ and the fixed point $P$  recursively. So, in each step of Montgomery ladder both differential addition and doubling are processed that needs the same number of ground finite field operations for every bit of the scalar as a secret. Also, at the last step of the ladder $y$-coordinate of the output point $kP$ is computed.


The Montgomery method is applied to other forms of elliptic curves. Here, a suitable rational function $w$ is defined over an elliptic curve $E$.  
The function $w$ is given by fraction of polynomials in the coordinate ring of $E$ with the extra condition that $w(P)=w(-P)$ for any point on $E$. 
The $w$-coordinate \emph{differential addition} and \emph{doubling}  (\textit{dADD}) means to compute $w(2P_1)$ and $w(P_1+P_2)$ from
given values $w(P_1)$, $w(P_2)$ and $w(P_2-P_1)$, where $P_1, P_2$ are points on $E$. In practice, the computation are processed where the points are represented in the projective form. In other words, $w(P)$ is represented by $(w(P):1)$ in the projective line $\mathbb{P}(\F)$ if $w$ is regular at the point $P$.
Otherwise, it is represented by $(1:0)$.

For Montgomery curve $\M_{A,B}$, the function $w$ is defined as the $x$-coordinate of the point $P=(x,y) \in\M_{A,B}$. Now, we consider the definition of Montgomery-like formulas for any elliptic curve $E$ as follows. 

\vspace*{5pt}
\noindent \textbf{$w$-coordinate dADD.} Throughout the paper, for the function $w$ on elliptic curve $E$ over $\F$, and for the points $P_1, P_2 \in E(\F)$, let  
$w_0=w(P_2-P_1)$, $w_1=w(P_1)$, $w_2=w(P_2)$, $w_3=w(P_1+P_2)$ and $w_4=w(2P_1)$. Also, for $i=0,1,2,3,4$, $w_i$ are represented by $(W_i : Z_i)$ in the projective line $\mathbb{P}(\F).$

\begin{definition}\label{def:Mf}
Let $E$ be an elliptic curve over the field $\F$ with characteristic $p~\ne~2$. We say the rational function $w$ in the coordinate ring of $E$ over $\F$ is a Montgomery-like function on $E$ if there exist some $e\in \F$ such that for all points $P_1, P_2 \in E(\F)$ with $W_0Z_0 \ne 0$, we have 
\begin{gather*}
(~W_4~:~ Z_4~)=(~(W_1^2-Z_1^2)^2~:~ 4W_1Z_1((W_1+Z_1)^2-eW_1Z_1)~),\\
(~W_3~:~ Z_3~)=(~ Z_0~(W_1W_2-Z_1Z_2)^2~ :~ W_0~(W_1Z_2-W_2Z_1)^2~).\\
\end{gather*} 	
\end{definition}

\begin{definition}\label{def:M}
	Let $E$ be an elliptic curve over the field $\F$ with characteristic $p~\ne~2$. We say $E$ has projective differential addition and doubling Montgomery-like (\textit{dADD-M}) formulas if $E$ has some Montgomery-like function $w$ by Definition~\ref{def:Mf}. 
\end{definition}

\begin{remark}\label{rem:MI}
	Let the elliptic curve $E/\F$ has a Montgomery-like function $w$ by Definition~\ref{def:Mf} with parameter $e \in \F$. Then, for the function $\omega=1/w$ on $E$ with the notation of Definition~\ref{def:Mf}, we have 
	\begin{gather*}\label{eq:MoI}
	\omega_4	= \dfrac{4\omega_1((\omega_1+1)^2-e\, \omega_1)}{(\omega_1^2-1)^2 }~~~~~,~~~~\omega_3\omega_0 = \dfrac{(\omega_1-\omega_2)^2}{(\omega_1\omega_2-1)^2}~.
	\end{gather*}
 Note that, the projective formulas for the function $\omega=1/w$ is obtained simply by swapping the coordinates in the projective formulas of the function $w$.  
\end{remark}

\begin{definition}\label{def:MI}
	Let $E$ be an elliptic curve over the field $\F$ with characteristic $p~\ne~2$. We say the rational function $w$ in the coordinate ring of $E$ over $\F$ is an inverted Montgomery-like function on $E$ if there exist some $e\in \F$ such that for all points $P_1, P_2 \in E(\F)$ with $W_0Z_0 \ne 0$, we have 
	\begin{gather*}
	(~W_4~:~ Z_4~)=(~4W_1Z_1((W_1+Z_1)^2-eW_1Z_1) ~:~ (W_1^2-Z_1^2)^2~),\\
	(~W_3~:~ Z_3~)=(~Z_0~(W_1Z_2-W_2Z_1)^2 ~:~ W_0~(W_1W_2-Z_1Z_2)^2 ~).
	\end{gather*} 	
\end{definition}

\begin{remark}\label{rem2}
	Let $E/\F$ be an elliptic curve with inverted Montgomery-like function $\omega$ on $E$ by Definition~\ref{def:MI} with parameter $e \in \F$. From Remark~\ref{rem:MI} and Definition~\ref{def:Mf} the function $w=1/\omega$ is a Montgomery-like function on $E$.  So, $E$ has dADD-M formulas.
\end{remark}

In the next sections of this work, we show that Jacobi quartic curves over a finite field $\F_q$ with a subgroup of order 4 has dADD-M formulas with suitable inverted Montgomery-like functions. The Algorithm~\ref{dadM} gives projective  $w$-coordinate dADD-M formulas for an elliptic curve $E$ with the inverted Montgomery-like function $w$. The formulas works for all inputs except for the case where $w(P_0)$ equals $(1:0)$ or $(0:1)$. 
	
\alglanguage{pseudocode}
\begin{algorithm}
	\caption{\; Montgomery-like Projective $w$-coordinate dADD}\label{dadM}
	\begin{algorithmic}[1]
		\Statex {{\bf Input} : Elliptic curve $E$ over a field $\F$, Montgomery-like function $w$ on $E$, $e \in \F$} 
		\Statex {\bf \hspace{40pt} $(W_i: Z_i)=w(P_i),\; i=0,1,2.$} \Comment{$w(P_0)=w(P_2-P_1)$}
		\Statex {{\bf Output} : $(W_i: Z_i)=w(P_i),\; i=3,4.$} \Comment{$w(P_3)=w(P_1+P_2),\; w(P_4)=w(2P_1)$}
		\Statex {}
		\Function{\; \rm{d}ADD}{$(W_0: Z_0),\; (W_1: Z_1),\; (W_2: Z_2)$}
		\State $W_3=Z_0~(W_1Z_2-W_2Z_1)^2$
		\State $Z_3=W_0~(W_1W_2-Z_1Z_2)^2  $
		\State $W_4=4W_1Z_1(~(W_1+Z_1)^2-eW_1Z_1~) $ 
		\State $Z_4=(W_1^2-Z_1^2)^2$
		\State \Return $((W_4: Z_4),\; (W_3: Z_3))$\Comment{The differential addition and doubling }
		\EndFunction
	\end{algorithmic}
\end{algorithm}

The cost of projective $w$-coordinates differential addition and doubling formulas in Algorithm~\ref{dadM} is $6\M+4\s+1\D$. If we set $Z_0=1$, from \eqref{eq:M2}, the costs of differential 
addition and doubling formulas are $3\M+2\s $, $2\M+2\s+1\D $, respectively. And, the total cost of the mixed differential addition and doubling is 
$5\M+4\s+1\D$. 
\begin{equation}
\label{eq:M2}
\begin{array}{c}
A_1=(W_1+Z_1) ,\ B_1=(W_1-Z_1),\ A_2=(W_2+Z_2),\ B_2=(W_2-Z_2), \vspace*{3pt}\\
C=A_1B_2~ ,~\ D=A_2B_1~,~\ 	E=A_1^2-B_1^2 , \vspace*{3pt}\\
W_4	= E(A_1^2-(e/4)~E)~,~Z_4=A_1^2B_1^2, \vspace*{3pt}\\
W_3=(C-D)^2~,~Z_3=w_0(C+D)^2 
\enspace. 
\end{array}
\end{equation}

In addition, the cost of following mixed differential addition and doubling formulas \eqref{eq:M4} is $3\M+7\s+1\D$. 
\begin{equation}
\label{eq:M4}
\begin{array}{c}
A_1=(W_1+Z_1) ,\ B_1=(W_1-Z_1),\ A_2=(W_2+Z_2),\ B_2=(W_2-Z_2), \vspace*{3pt}\\
C=A_1B_2~ ,~\ D=A_2B_1~,~\ 	E=A_1^2-B_1^2 ~,~F=(A_1^4+B_1^4)-E^2, \vspace*{3pt}\\
W_4	= 2(A_1^4-(e/4)E^2)-F~~~,~~~Z_4=F, \vspace*{3pt}  \\
W_3=(C-D)^2~~~,~~~Z_3=w_0(C+D)^2 \enspace.  
\end{array}
\end{equation}

Furthermore, for the elliptic curve $E$ where $e(e-4)$ is a square in $\F$,  
the cost of following mixed differential addition and doubling formulas \eqref{eq:M6} is $3\M+6\s+3\D$. Here we obtain $r\in \F$ such that $r^2=(e-4)/e$. 
\begin{equation}
\label{eq:M6}
\begin{array}{c}
A_1=(W_1 + Z_1)~,~ B_1=(W_1 - Z_1)~,~A_2=(W_2 + Z_2)~, B_2=(W_2 -Z_2), \vspace*{3pt}\\
C= A_1~B_2~,~D= A_2~B_1~,~H_1=(rA_1^2+B_1^2)^2~,~ H_2=(rA_1^2-B_1^2)^2, \vspace*{3pt}\\
G=(H_1+H_2)~,~ K=(H_1-H_2)~,~ S=\frac{1}{r}K~ ,~ T=rK~, \vspace*{3pt}\\
W_4=2G-S-T~~,~~Z_4=T-S, \vspace*{3pt}\\
W_3=(C-D)^2~~,~~Z_3=w_0(C+D)^2 \enspace.
\end{array}
\end{equation}

From differential addition and doubling formulas \eqref{eq:M6}, the costs of differential
addition and doubling are $3\M+2\s $, $4\s+3\D $ respectively. And, the total cost of the mixed differential addition and doubling formulas is
$3\M+6\s+3\D$, where $2\D$ are the multiplication by parameter $r$ and one $\D$ is the multiplication by $1/r$. 
So, if the parameter $r$ is chosen to be small then the cost of mixed differential formulas is $3\M+6\s+1\D$.

\begin{algorithm}
	\caption{\; The Montgomery ladder}\label{MonLD}
	\begin{algorithmic}[1]
        \Statex {\bf Input : $E/\F,\; w: E(\F) \rightarrow \mathbb{P}(\F),$} \Comment{The elliptic curve $E$ over $\F$}
        \Statex {\hspace{33pt}  Projective $w$-coordinate dADD function,}
		\Statex {\bf \hspace{32pt} $k=(k_{m-1},\cdots, k_1,k_0)$} \Comment{$0\le k \in \mathbb{Z}$}
		\Statex {\bf \hspace{33pt} $(W_0:Z_0):=w(P),\, (W_1:Z_1):=w(\cO),\, (W_2:Z_2):=w(P).$}
		\Statex {\bf Output : $w(kP)$}
		\Statex {}
		\For{$i:=m-1$ {\bf down to} $0$}
        \If{$k_i=0$}
        \State {$((W_1:Z_1), (W_2:Z_2)):=dADD((W_0: Z_0),(W_1: Z_1),(W_2: Z_2))$}
        \Else
        \State {$((W_2:Z_2), (W_1:Z_1)):=dADD((W_0: Z_0),(W_2: Z_2),(W_1: Z_1))$}
        \EndIf
        \EndFor
        \State \Return $(W_1: Z_1),\; (W_2: Z_2)$ \Comment{The differential addition and doubling }
\	\end{algorithmic}
\end{algorithm} 

The Montgomery ladder is given by the Algorithm~\ref{MonLD}, that computes $w(kP)$ for any integer $k$ and any point $P$ in elliptic curve $E$ over $\F$. Here, in each step of the ladder the $w$-coordinate dADD function is required. Notice, if $E$ has dADD-~M formulas with a (inverted) Montgomery-like function $w$, then the Algorithm~\ref{MonLD}, for any integer $k$ and any point $P$, where $w(P)\ne (0:1) , (1:0)$, computes $w(kP)$ correctly. In particular, the ladder works properly even if the integer $k$ is bigger than the order of base point $P$. So, the value of $k$ can be chosen randomly as a countermeasure against differential power analysis attack.
\section{Isogeny} \label{Iso}
\begin{definition}\cite{Silv}
Let $E_1$ and $E_2$ be elliptic curves. An isogeny from $E_1$ to $E_2$ is a morphism $\phi$, satisfying $\phi(\cO_{\E_1})=\cO_{\E_2}$. 
\end{definition} 
If the kernel of a separable isogeny $\phi$ has order $l$, then $\phi$ is known as an $l$-isogeny, and $l$ is the degree of the isogeny. Also if $\phi: E_1 \rightarrow E_2$ be a nonconstant isogeny of degree $l$, then there exists a unique dual isogeny 

\vspace*{2mm}
$~~~~~~~~~~~~~~~~\hat{\phi}: E_2 \rightarrow E_1~~~~~~~~~~$ 
satisfying  
$~~~~~~~~~~~ \hat{\phi} \circ \phi=[l].$
\vspace{2mm}

 Two elliptic curves $E_1$ and $E_2$ are isogenous if there is an isogeny from $E_1$ to $E_2$ with $\phi(E_1) \neq \{\cO$\}.
Clearly if $\phi$ is an isogeny, then $\phi$ preserve the group identity. Also any nonconstant rational map $\phi$ from $E_1$ to $E_2$ which preserves the group identity is an isogeny. For a more complete reference, see \cite{Silv} and \cite{WDC8}. 
\begin{lemma}\label{ISOw}
Let $E_1$ and $E_2$ be two isogenous elliptic curve over a field $\F$ and let $\phi$ be an isogeny from $E_1$ to $E_2$. Suppose we have a differential function $\w$ on $E_2$. Then  $w=\w \circ \phi$ is a differential function on $E_1$. 
\end{lemma}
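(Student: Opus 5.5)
The statement is essentially a transport-of-structure result, so the proof amounts to unwinding what ``differential function'' means and using that $\phi$ is a group homomorphism. I read ``differential function on $E_2$'' as a rational function $\w$ on $E_2$ with $\w(Q)=\w(-Q)$ for all $Q$, whose values admit dADD formulas. That is, there are fixed rational (projective) expressions $\mathcal{A},\mathcal{D}$ with
\[
\w(Q_1+Q_2)=\mathcal{A}\bigl(\w(Q_2-Q_1),\w(Q_1),\w(Q_2)\bigr),\qquad \w(2Q_1)=\mathcal{D}\bigl(\w(Q_1)\bigr)
\]
for all $Q_1,Q_2\in E_2(\F)$ outside the exceptional locus. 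The Montgomery-like and inverted Montgomery-like functions of Definitions~\ref{def:Mf} and \ref{def:MI} are the cases of interest. My plan is to show that $w=\w\circ\phi$ satisfies exactly the same formulas $\mathcal{A},\mathcal{D}$, with the same parameter $e$.

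The first step is to check that $w$ is a legitimate function of the required type. Since $\phi$ is a morphism defined over $\F$, $w$ is a rational function on $E_1$ defined over $\F$. Because $\phi$ is a nonconstant isogeny, it is a group homomorphism (standard fact, \cite{Silv}), so $\phi(-P)=-\phi(P)$. Hence
\[
w(-P)=\w(-\phi(P))=\w(\phi(P))=w(P),
\]
and $w$ is even.

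The second step is the main computation. Take $P_1,P_2\in E_1(\F)$ and set $Q_i=\phi(P_i)\in E_2(\F)$. By the homomorphism property, $\phi(P_1+P_2)=Q_1+Q_2$, $\phi(P_2-P_1)=Q_2-Q_1$, and $\phi(2P_1)=2Q_1$. Therefore
\[
w_3=w(P_1+P_2)=\w(Q_1+Q_2)=\mathcal{A}\bigl(\w(Q_2-Q_1),\w(Q_1),\w(Q_2)\bigr)=\mathcal{A}(w_0,w_1,w_2),
\]
and likewise $w_4=\mathcal{D}(w_1)$. At the level of projective representatives $(W_i:Z_i)$, the identities are identities in $\mathbb{P}(\F)$, so they transfer verbatim.

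The only point needing care, and the main obstacle, is the exceptional locus. Definition~\ref{def:Mf} requires validity whenever $W_0Z_0\neq 0$. For $E_1$ that condition reads $w(P_2-P_1)\notin\{(0:1),(1:0)\}$, which is literally the same as $\w(Q_2-Q_1)\notin\{(0:1),(1:0)\}$. So the hypothesis on $E_2$ applies at every pair $(Q_1,Q_2)$ we need, and since the $E_2$ formulas hold for all points of $E_2(\F)$ (not just those in the image of $\phi$), no points are lost. If $\phi(P)$ lands where $\w$ is not regular, the value $(1:0)$ is still the correct projective value, because the composition of rational maps to $\mathbb{P}^1$ is taken as a morphism of the smooth curve $E_1$. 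This gives the conclusion, and in particular a Montgomery-like $\w$ pulls back to a Montgomery-like $w$ with the same $e$, so the cost counts of Algorithm~\ref{dadM} carry over unchanged.
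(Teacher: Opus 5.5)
Your proof is correct and follows the paper's argument: evenness and the doubling and addition formulas are transported through $\phi$, using that an isogeny is a group homomorphism, so $w(2P)=\w(2\phi(P))$ and $w(P_1+P_2)=\w(\phi(P_1)+\phi(P_2))$ with $w(P_2-P_1)=\w(\phi(P_2)-\phi(P_1))$. You also check that the exceptional condition $W_0Z_0\neq 0$ on $E_1$ is the same as the one on $E_2$, and that projective values compose correctly; the paper leaves both points implicit by working with generic rational expressions.
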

\begin{proof}
Clearly $w$ is a function on $E_1$ and for every point $P$ on $E_1$ we have $w(-P)=\w(\phi(-P))=\w(-\phi(P))=\w(\phi(P))=w(P).$ 

Since $\w$ is a differential function on $E_2$, there is a rational function $\mathcal{F}$ in $\F(x)$ that for every point $Q$ on $E_2$ we have $\w(2Q)=\mathcal{F}(\w(Q))$. 
For every point $P$ on $E_1$, $w(2P)=\w(\phi(2P))=\w(2\phi(P))=\mathcal{F}(\w(\phi(P)))=\mathcal{F}(w(P))$. 

There also exist rational function $\mathcal{G}$ on $\F(x_1,x_2,x_3)$ that for all points $Q_1, Q_2$ on $E_2$, $\w(Q_1+Q_2)=\mathcal{G}(w(Q_1),w(Q_2),w(Q_2-Q_1))$. So, for all points $P_1, P_2$ on elliptic curve $E_1$ we have, 
$w(P_1+P_2)=\w(\phi(P_1+P_2))=\w(\phi(P_1)+\phi(P_2))=\mathcal{G}(\w(\phi(P_1)),\w(\phi(P_2)),\w(\phi(P_2)-\phi(P_1)))=
\mathcal{G}(\w(\phi(P_1)),\w(\phi(P_2)),\w(\phi(P_2-P_1)))$ $=\mathcal{G}(w(P_1),w(P_2),w(P_2-P_1))$.  \qed
\end{proof}
\subsection{ Montgomery curve to weierstra\ss}
Let $\F_q$ be a field with odd characteristic and $A,B \in \F_q$ such that $B(A^2-4B)\ne 0$. 
If $\chi(B)=1$ then elliptic curve $\W_{A,B}:y^2=x(x^2+Ax+B)$, is birationally equivalent to the Montgomery curve $\M_{A/\sqrt{B},\sqrt{B}}$ under the transformation

\begin{equation*}\label{LegMon}
\phi(x,y)=(\frac{x}{\sqrt{B}},\frac{y}{B})~~~,~~~\phi^{-1}(X,Y)=(\sqrt{B}X,BY).
\end{equation*}
The Montgomery curve $\M_{A,B}$ is isogenous ( see exercise 4.11,\cite{WDC8}) to elliptic curve $\E:v^2=u(u^2 - 2ABu +B^2 (A^2 - 4)) 
$ via\\
\begin{equation*}\label{MonLeg}
\psi(x,y)=\Big(\frac{B^2y^2}{x^2},\frac{B^2y(1-x^2)}{x^2} \Big).
\end{equation*}

So, $ \varphi= \phi  \circ \psi $ is an isogeny from the Montgomery curve $\M_{A,B}$  to Montgomery curve $\M_{A',B'} $  by the map  \\ 
\begin{equation}\label{MonMon}
\varphi(x,y)=\Big(\frac{By^2}{\sqrt{A^2-4}\;x^2},\frac{y(1-x^2)}{(A^2-4)\;x^2} \Big).
\end{equation}
where $A'=\dfrac{-2A}{\sqrt{A^2-4}},\;B'=B\sqrt{A^2-4}$.

\subsection{Twisted Edwards  to Montgomery curve}
Let $\F_q$ be a field with odd characteristic and $a,d \in \F_q$ such that $ad(a-d)\ne 0$. 
A {\it twisted Edwards curve } over a field $\F_q$, is given by the equation
\begin{equation*}
	\label{eq:TE} TE_{a,d}:\quad~au^2+v^2=1+du^2v^2.
\end{equation*}

It is shown in \cite{BBLP}, that the twisted Edwards curve $\TE_{a,d}$ over a field $\F_q$ is 

birationally equivalent to the Montgomery curve $\M_{A,B}$ 
by the map 
\begin{equation}\label{TEMon}
\psi(u,v)=\Big(\frac{1+v}{1-v},\frac{1+v}{u(1-v)} \Big).
\end{equation}
where $A=2(a+d)/(a-d),\, B=4/(a-d)$. 

Also, the Montgomery curve $\M_{A,B}$ is birationally equivalent to the twisted Edwards
curve $\TE_{a,d}$ by the inverse map 
\begin{equation*}\label{TEMon2}
\psi^{-1}(x,y)=\Big(\frac{x}{y},\frac{x-1}{x+1} \Big),
\end{equation*}
where $a=(A + 2)/B,\, d = (A - 2)/B$.

Combining (\ref{MonMon}) with (\ref{TEMon}) we deduce that the twisted Edwards curve $TE_{a,d}$ and Montgomery curve $\M_{A,B}$, where $A=-(a+d)/\sqrt{ad},\, B=16\sqrt{ad}/(a-d)^2$, are isogenous 
by the map\\
\begin{gather}\label{TEMo2}
\phi(u,v)=(\dfrac{1}{\sqrt{ad}\;u^2},\dfrac{(a-d)^2v}{4ad\ u(v^2-1)}).
\end{gather}
Also the Jacobi quartic curve $J_{a^2,a-2d}$ and Montgomery curve $\M_{2-4d/a,a}$  are isogenous \cite{AG,H15}  by the map\\
\begin{equation}
\label{JM}
\psi(x,y)=\left(\dfrac{1}{ax^2},\dfrac{-y}{a^2x^3}\right)\; ~~,~~\psi^{-1}(u,v)=\left(\dfrac{1-u^2}{2av},\dfrac{a(u+1)^4+8du(u^2+1)}{4a^2v^2}\right).
\end{equation}

And the  Jacobi quartic curve $J_{a^2,a-2d}$ and twisted Edwards curve $TE_{a,d}$ are isogenous \cite{AG,H15} by the map\\
\begin{gather}\label{JTE}
\varphi(x,y)=(\dfrac{2x}{1+ax^2},\dfrac{1-ax^2}{y}) ~,~
\varphi^{-1}(u,v)=(\dfrac{u}{v},\dfrac{1-du^2v^2}{v^2}).
\end{gather}

So from combination of  isogeny maps (\ref{MonMon}) and (\ref{JM}), the Jacobi quartic curve $J_{a^2,\delta}$ and Montgomery curve $\M_{2\delta/a,a}$  are isogenous   by the map
\begin{equation}
\label{JM2}
\psi (x,y)=\left(\dfrac{y^2}{2\sqrt{\delta^2-a^2}\; x^2},\dfrac{y(1-a^2x^4)}{4(\delta^2- a^2) x^3}\right)\;.
\end{equation}

And from combination of isogenies (\ref{TEMon}) and (\ref{JM}), we can obtain the following isogeny between the Jacobi quartic curve $J_{a^2,\delta}$ and twisted Edwards curve $TE_{2(\delta+a),2(\delta-a)}$ by the map
\begin{gather}\label{JTE2}
\phi(x,y)=(\dfrac{-x}{y},\dfrac{1-ax^2}{1+ax^2}).
\end{gather}

Also from combination of  isogeny maps (\ref{TEMo2}) and (\ref{JTE}), the Jacobi quartic curve $J_{a^2,\delta}$ and Montgomery curve $\M_{A,B}$ are isogenous   by the map
\begin{equation}
\label{JMont3}
\psi (x,y)=( \dfrac{(1+ax^2)^2}{2\sqrt{2a(a-\delta)}\;x^2},\dfrac{(a+\delta)y(1-a^2x^4)}{32a(\delta-a)\;x^3}),
\end{equation}
 where $A=\dfrac{-3a+\delta}{\sqrt{2a(a-\delta)}}$ and $ B=\dfrac{32\sqrt{2a(a-\delta)}}{(a+\delta)^2}$.

\section{Differential addition on Jacobi quartic}
\label{sec-daddJ}
In this section, we extend the Montgomery method to Jacobi quartics curves $\J_{\epsilon,\delta}$ with efficient and fast $w$-coordinate differential addition and doubling. Here, $w$ is a rational function in the coordinate ring of the Jacobi curve $\J_{\epsilon,\delta}$ over $\F_q$ where $w(P)=w(-P)$ for every point $P$ in $\J_{\epsilon,\delta}(\F_q)$. 

Assume from now on that $\cP_0=P+\cT$ and $\cP_i=P+Q_i$. $(i=1,...,4)$.
\subsection{$w$-function invariant on $<\mathcal{T}>$}
We introduce two $w$-functions that  are invariant for the coset of a point up to the 2-torsion subgroup with a single point of order 2 of the $\J_{\epsilon,\delta}$ over $\F_q$.  i.e., 
$w(P)=w(Q)$ for all points $Q$ in $<\mathcal{T}>+P$.
\begin{proposition}\label{P:J1}
Let $w$ be a function on Jacobi quartic curve $\J_{\epsilon,\delta}$ over $\F_q$ given by $w(x,y)=\delta x^2$. Let $P_1$ , $P_2$ be two points of Jacobi quartic curve $\J_{\epsilon,\delta}$. Consider the $w$-coordinate dADD for $P_1$ , $P_2$. If for $i=0,1,2,3,4,$ $w_i \in \F_q$, $w_1w_2 \ne \delta^2/\epsilon  $ and $w_1^2 \ne \delta^2/\epsilon$, then we have 
\begin{gather}\label{eq:A0}
w_4	= \dfrac{4w_1(tw_1^2+1+2w_1)}{(tw_1^2-1)^2 }~~~~~,~~~~w_3w_0 = \dfrac{(w_1-w_2)^2}{(tw_1w_2-1)^2}~,
\end{gather}
where $t=\epsilon/\delta^2$.
\end{proposition}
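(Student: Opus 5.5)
The plan is a direct computation from the affine addition law \eqref{eq:AdEJ} and doubling law \eqref{eq:DbEJ}, using only the $x$-coordinates. The function $w=\delta x^2$ only sees $x^2$, and the $y$-coordinates enter only through $y^2=\epsilon x^4+2\delta x^2+1$. Throughout write $P_i=(x_i,y_i)$, so $w_i=\delta x_i^2$. Then $\epsilon x_1^2x_2^2 = t\,w_1w_2$ and $\epsilon x_1^4=t\,w_1^2$ with $t=\epsilon/\delta^2$; this implicitly uses $\delta\neq 0$, which is needed for $t$ to make sense. In particular, the hypotheses $w_1w_2\neq \delta^2/\epsilon$ and $w_1^2\neq\delta^2/\epsilon$ say exactly that $1-\epsilon x_1^2x_2^2\neq 0$ and $1-\epsilon x_1^4\neq 0$. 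These are the conditions under which the affine formulas \eqref{eq:AdEJ} and \eqref{eq:DbEJ} apply.

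\textbf{Doubling.} From \eqref{eq:DbEJ}, $x(2P_1)=2x_1y_1/(1-\epsilon x_1^4)$. I will square this, substitute $y_1^2=\epsilon x_1^4+2\delta x_1^2+1$, and multiply by $\delta$. This gives
\[
w_4=\frac{4\delta x_1^2(\epsilon x_1^4+2\delta x_1^2+1)}{(1-\epsilon x_1^4)^2}.
\]
Rewriting $\delta x_1^2=w_1$, $\epsilon x_1^4=tw_1^2$ and $2\delta x_1^2=2w_1$ yields $w_4=4w_1(tw_1^2+2w_1+1)/(tw_1^2-1)^2$, which is the first formula of \eqref{eq:A0}.

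\textbf{Differential addition.} Since $-P_1=(-x_1,y_1)$, applying \eqref{eq:AdEJ} to $P_2$ and $-P_1$ gives
\[
x(P_2-P_1)=\frac{x_2y_1-x_1y_2}{1-\epsilon x_1^2x_2^2},\qquad x(P_1+P_2)=\frac{x_1y_2+x_2y_1}{1-\epsilon x_1^2x_2^2}.
\]
The two denominators agree. Hence $x_3x_0=(x_2^2y_1^2-x_1^2y_2^2)/(1-\epsilon x_1^2x_2^2)^2$.

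The key step is to simplify this numerator using the curve equation. The $2\delta x_1^2x_2^2$ terms cancel, and one should obtain
\[
x_2^2y_1^2-x_1^2y_2^2=\epsilon x_1^2x_2^2(x_1^2-x_2^2)+(x_2^2-x_1^2)=(x_2^2-x_1^2)(1-\epsilon x_1^2x_2^2).
\]
Therefore $x_3x_0=(x_2^2-x_1^2)/(1-\epsilon x_1^2x_2^2)$. Squaring and multiplying by $\delta^2$ gives
\[
w_3w_0=\frac{(w_2-w_1)^2}{(1-tw_1w_2)^2},
\]
which is the second formula of \eqref{eq:A0}.

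The only nontrivial point is this factorization, which removes the $y$-dependence; everything else is substitution. I would also remark that the $y$-coordinates of the sums are never needed, so the $y$-part of \eqref{eq:AdEJ} plays no role.
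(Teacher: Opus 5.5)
Your proposal is correct and is essentially the paper's proof. Both compute $w_4$ and $w_3w_0$ directly from the affine addition and doubling laws, eliminate $y_i^2$ via the curve equation, and factor the numerator $x_2^2y_1^2-x_1^2y_2^2$ as $(x_2^2-x_1^2)(1-\epsilon x_1^2x_2^2)$; the only difference is that the paper substitutes $y_i^2=tw_i^2+2w_i+1$ first and factors in $w$, while you factor in $x$-coordinates first and also note explicitly that $\delta\neq 0$ is needed and that the hypotheses are exactly the nonvanishing of the denominators.
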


\begin{proof}
This $w$-function  is well computed for all affine points on $\J_{\epsilon,\delta}$. For all points $P$ on the curve, we have $w(P)=w(-P)$. In particular, we have $w(\cO)=(0:1)$.

Let $P_1=(x_1,y_1)$ and $P_2=(x_2,y_2)$  be two points of Jacobi quartic curve $\J_{\epsilon,\delta}$.
From addition formula \eqref{eq:AdEJ} and  doubling formula \eqref{eq:DbEJ} we have 
\[w_4=\dfrac{4\delta x_1^2y_1^2}{(1-\epsilon x_1^4)^2}, \]

\[w_3w_0=\delta^2(\dfrac{x_2y_1+x_1y_2}  {1-\epsilon x_1^2x_2^2})^2(\dfrac{x_2y_1-x_1y_2}  {1-\epsilon x_1^2x_2^2})^2= \dfrac{(\delta x_2^2y_1^2-\delta x_1^2y_2^2)^2}  {(1-\epsilon x_1^2x_2^2)^4}.
\]
 
From  curve equation  and definition of $w$-function we have  $y_i^2=tw_i^2+2w_i+1$.
Now by  straightforward calculation, we obtain 
\begin{equation*}
w_4	= \dfrac{4w_1(tw_1^2+2w_1+1)}{(tw_1^2-1)^2 }
\end{equation*}
and
 \begin{equation*}
  w_3w_0=\dfrac{(w_1(tw_2^2+2w_2+1)-w_2(tw_1^2+2w_1+1))^2}  {(1-tw_1w_2)^4}= \dfrac{(w_1-w_2)^2}{(tw_1w_2-1)^2}. 
 \end{equation*}
 \end{proof} \qed
%
%


Assume that $w_0$ is given as a field element, and the inputs $w_1, w_2$ are given as fractions
$W_1/Z_1$, $W_2/Z_2$ and the outputs $w_4, w_3 $ are given as fraction $W_4/Z_4$ and  $W_3/Z_3$. From
Eq.~\eqref{eq:A0} the explicit projective formulas are given by
\begin{equation}
\label{eq:P00}
\begin{array}{c}
\dfrac{W_4}{Z_4}= \dfrac{4W_1Z_1(~tW_1^2+Z_1^2+2W_1Z_1)} {(tW_1^2-Z_1^2)^2}, \vspace*{7pt}\\
\dfrac{W_3}{Z_3}= \dfrac{Z_0~(W_1Z_2-W_2Z_1)^2}  {W_0~(tW_1W_2 -Z_1Z_2)^2}\enspace. 
\end{array}
\end{equation}

The cost of this projective $w$-coordinates addition and doubling formulas is $6\M+6\s+2\D$.
If we set $Z_0=1$, then we obtain the following mixed projective $w$-coordinates differential addition and doubling formulas with cost of $5\M+6\s+2\D$.
\begin{equation}
\label{eq:M0}
\begin{array}{c}
A_1=(W_1+Z_1) ,\  A_2=(W_2-Z_2), B_1=W_1W_2,\ B_2=Z_1Z_2 \vspace*{3pt}\\
C=A_1A_2~ ,~\ D=B_1-B_2~,~\ 	E=A_1^2-(W_1^2+Z_1^2) , \vspace*{3pt}\\
W_4	= 2E(tW_1^2+Z_1^2+E)~,~Z_4=(tW_1^2-Z_1^2)^2, \vspace*{3pt}\\
W_3=(C-D)^2~,~Z_3=w_0(tB_1-B_2)^2 
\enspace. 
\end{array}
\end{equation}
\begin{proposition}\label{P:J01}
Fix a field  $\F_q$ with odd charactristic and $\epsilon,\delta \in \F_q $ such that $\delta(\delta^2-\epsilon) \ne 0$. Define $\J_{\epsilon,\delta}$ as the Jacobi quartic curve $y^2=\epsilon x^4+2\delta x^2+1$. Define $w:~J_{\epsilon,\delta}(\F_q) \mapsto \F_q  $ as fallow : $w(x,y)=\delta x^2$.

Let $W_1,Z_1,W_2,Z_2,W_3,Z_3,W_4,Z_4$ be elements of $\F_q$ and $t=\epsilon/\delta^2$. Define
\begin{equation}
\label{eq:PJ0}
\begin{array}{c}
W_4= 4W_1Z_1(~tW_1^2+Z_1^2+2W_1Z_1), \vspace*{7pt}\\
Z_4= (tW_1^2-Z_1^2)^2,~~~~~~~~~~~~~~~~~~~~~~\vspace*{7pt}\\
W_3= Z_0~(W_1Z_2-W_2Z_1)^2,~~~~~~~~~~~~\vspace*{7pt}\\
Z_3= W_0~(tW_1W_2 -Z_1Z_2)^2.~~~~~~~~~~~\enspace 
\end{array}
\end{equation}

\item[(i)] Let $P_1$ be an element of $\J_{\epsilon,\delta}$. If $W_1 \ne 0 $ or $ Z_1 \ne 0 ;~w(P_1)=W_1/Z_1$, then $W_4/Z_4 \ne 0/0$ and $w(2P_1)=W_4/Z_4$. 


\item[(ii)] Assume that $W_0 \ne 0; ~Z_0 \ne 0;~w(P_0)=W_0/Z_0. ~W_1 \ne 0 $ or $ Z_1 \ne 0 ;~w(P_1)=W_1/Z_1 .$ 
 $W_2 \ne 0$ or $ ~Z_2 \ne 0;~w(P_2)=W_2/Z_2$,  then $W_3/Z_3 \ne 0/0$ and $w(P_1+P_2)=W_3/Z_3$. 

$~$

Here $W/Z$ means the quotient of $W$ and $Z$ in $\F_q$  if $Z \ne 0$; it means $\infty$ if $W \ne 0$ and $Z=0$; it is undefined if $W=Z=0$.
\end{proposition}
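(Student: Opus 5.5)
The plan is to derive the statement from Proposition~\ref{P:J1} in two steps. First I would show that the projective expressions \eqref{eq:PJ0} never give $0/0$ under the stated hypotheses. Then I would show that the rational identities \eqref{eq:A0}, known on a dense open set, extend to every point where the projective pair is defined.

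Throughout I use the following facts.
\begin{itemize}
\item $t=\epsilon/\delta^2\neq 0$, because $\epsilon\ne 0$ for the curve.
\item $t\neq 1$, because $\delta^2\ne\epsilon$.
\item $w(P)=0$ exactly when $x=0$, that is, when $P\in\{\cO,\cT\}$.
\item $w(P)=\infty$ exactly at the points at infinity.
\item From the curve equation, $y^2=tw^2+2w+1$.
\end{itemize}

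\emph{Part (i), non-degeneracy.} Suppose $W_4=Z_4=0$. Then $tW_1^2=Z_1^2$, and one of $W_1$, $Z_1$, $tW_1^2+Z_1^2+2W_1Z_1$ vanishes.
\begin{itemize}
\item If $W_1=0$, then $Z_1=0$, which is excluded by hypothesis.
\item If $Z_1=0$, then $tW_1^2=0$, so $W_1=0$ since $t\ne0$; again excluded.
\item Otherwise $0=tW_1^2+Z_1^2+2W_1Z_1=2Z_1(Z_1+W_1)$. This forces $W_1=-Z_1$, and then $tW_1^2=Z_1^2$ gives $t=1$, a contradiction.
\end{itemize}

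\emph{Part (i), correctness.} Consider the map $P\mapsto (W_4(w(P)):Z_4(w(P)))$ from $\J_{\epsilon,\delta}$ to $\mathbb{P}^1$. By the step above it is a morphism. By Proposition~\ref{P:J1} it agrees with $w\circ[2]$ on the dense open set where $w_1$ is finite and $tw_1^2\ne1$. Two morphisms from an irreducible curve to $\mathbb{P}^1$ that agree on a dense open set agree everywhere. This covers the exceptional points: $P_1\in\{\cO,\cT\}$, $P_1$ at infinity, and $\epsilon x_1^4=1$, where $2P_1$ is at infinity.

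\emph{Part (ii), non-degeneracy.} $W_0\ne0$ gives $P_2-P_1\notin\{\cO,\cT\}$, and $Z_0\ne0$ gives that $P_2-P_1$ is affine. Suppose $W_3=Z_3=0$. Then $w_1=w_2$ and $tw_1w_2=1$, read projectively.
\begin{itemize}
\item If one of $w_1,w_2$ is $\infty$, then so is the other. So both $P_1$ and $P_2$ are among the points at infinity, which are $2$-torsion. Hence $P_2-P_1\in\J[2]$ is either $\cO$ (excluded) or a point at infinity (excluded).
\item If both are finite, then $x_2^2=x_1^2$ and $y_2^2=y_1^2$, so $P_2\in\{\pm P_1,\pm P_1+\cT\}$. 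Since $P_2-P_1\notin\{\cO,\cT\}$, we must have $P_2=-P_1$ or $P_2=-P_1+\cT$. Then $P_2-P_1=-2P_1$ or $-2P_1+\cT$. But $tw_1^2=1$ means $\epsilon x_1^4=1$, so by the doubling formula \eqref{eq:DbEJ} the point $2P_1$ is at infinity. Therefore $P_2-P_1$ is a point at infinity, contradicting $Z_0\ne0$.
\end{itemize}

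\emph{Part (ii), correctness.} Work on the surface $\J\times\J$, restricted to the open subset $U$ where $P_2-P_1$ is affine and not in $\{\cO,\cT\}$. On $U$, the map $(P_1,P_2)\mapsto (W_3:Z_3)$, built from $w(P_1)$, $w(P_2)$ and $w(P_2-P_1)$, is a morphism by the step above. By Proposition~\ref{P:J1} it coincides with $(P_1,P_2)\mapsto w(P_1+P_2)$ on a dense open subset of $U$. Since $U$ is irreducible and $\mathbb{P}^1$ is separated, the two morphisms agree on all of $U$.

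\emph{Main obstacle.} The delicate part is the non-degeneracy argument in (ii). It requires correctly enumerating the coset $P_1+\langle\cT\rangle$ and its negatives from the description of $P+\J(\F)[2]$, and it requires checking the infinite cases, so that exactly the hypotheses $W_0Z_0\ne0$ are what rule out $0/0$. The extension-by-density argument itself is standard.
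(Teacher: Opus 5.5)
Your proof is correct, but it takes a different route from the paper's.

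\textbf{How the paper argues.} The paper works pointwise. For (i) it splits into three cases:
\begin{itemize}
\item $Z_1=0$: then $P_1$ is at infinity and $2P_1=\cO$.
\item $W_1=0$: then $P_1\in\{\cO,\cT\}$.
\item $W_1Z_1\neq 0$: it uses the same $W_1=-Z_1\Rightarrow t=1$ computation you do, and then quotes Proposition~\ref{P:J1}.
\end{itemize}
For (ii) it goes through the cases ``$P_1,P_2$ at infinity'', ``$P_2=-P_1$'' and ``$P_2\neq\pm P_1$'', computing $(W_3:Z_3)$ by hand in the degenerate ones.

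\textbf{How you argue.} You prove only that the homogeneous formulas never return $(0,0)$ on the relevant locus. Then you use rigidity: two morphisms to the separated $\mathbb{P}^1$ that agree on a dense open subset of an irreducible reduced source agree everywhere. This extends the generic identity of Proposition~\ref{P:J1} to every point.

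\textbf{What your route gains.} It is uniform, and it is more complete than the paper's case analysis.
\begin{itemize}
\item The paper invokes Proposition~\ref{P:J1} outside its hypotheses. Examples are $P_1$ at infinity, or $tw_1^2=1$ resp.\ $tw_1w_2=1$, where the output lies at infinity.
\item In the case $P_2\neq\pm P_1$, the paper infers $P_2=\pm P_1$ from $w(P_1)=w(P_2)$. This overlooks $P_2=-P_1+\cT$. That case does occur with $W_0Z_0\neq 0$, and there $W_3=0$ is actually the correct value, since $P_1+P_2=\cT$. Your enumeration $P_2\in\{\pm P_1,\pm P_1+\cT\}$, combined with $\epsilon x_1^4=1\Rightarrow 2P_1$ at infinity, handles this properly.
\end{itemize}

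\textbf{What your route costs.}
\begin{itemize}
\item You need the smooth model in $\mathbb{P}^3$, because the $\mathbb{P}^2$ closure is singular at infinity. Only there are $w$, $[2]$ and subtraction morphisms.
\item You must work over $\overline{\F}_q$, since $\J(\F_q)$ is finite and ``dense open'' only makes sense geometrically. Your non-degeneracy arguments use only $\ch\neq 2$, $t\neq 0$ and $t\neq 1$, so they hold over any field and this is harmless.
\end{itemize}
The paper's route, by contrast, needs no algebraic geometry and yields the explicit values at the exceptional points.

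\textbf{One small slip.} If $P_1,P_2$ are the two distinct points at infinity, then $P_2-P_1=\infty+\overline{\infty}=\cT$, not a point at infinity. This case is still excluded by $W_0\neq 0$. In any event, $Z_1=Z_2=0$ gives $Z_3=t^2W_0W_1^2W_2^2\neq 0$ directly, so $W_3=Z_3=0$ is impossible there.
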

\begin{proof}

\item[(i)]
We have two casese $W_1 Z_1=0 $ and $W_1Z_1 \neq 0$.
\begin{itemize}
\item[$\bullet$] If $W_1 Z_1=0 $ then $Z_1=0, W_1 \neq 0$ or $W_1=0, Z_1 \neq 0$.
\begin{description}
\item[-] If $Z_1=0$ then $Z_4=t^2W_1^4\ne 0 $ and $W_4=0$ so $(W_4:Z_4) = (0:1)$ and  $W_4/Z_4 \ne 0/0$.
Since  $Z_1=0$ so $P_1= \infty$ or $ \overline{\infty}$ and $2P_1=\cO$. Hence $w(2P_1)=(0:1)$ as claimed.
\item[-] If $W_1=0$ then $Z_4=Z_1^4\ne 0 $ and $W_4=0$ so $(W_4:Z_4) = (0:1)$ and  $W_4/Z_4 \ne 0/0$. 
From  $W_1=0, Z_1 \neq 0$ we have $2P_1= \cO $, so $w(2P_1)=w(\cO)=(0:1)$ as claimed.
\end{description} 

\item[$\bullet$] If $W_1Z_1 \ne 0$ then $P_1 \ne \infty , \overline{\infty}$. We show that $W_4/Z_4\neq 0/0$. 

Assume that $W_4/Z_4=0/0$ then 
\begin{equation*}
\begin{cases}
tW_1^2+Z_1^2+2W_1Z_1=0, \quad ~\\
\quad ~\\
tW_1^2=Z_1^2 .
\end{cases}
\end{equation*}
By solving this system, we have 
$$2Z_1^2+2W_1Z_1=0\Rightarrow 2Z_1(Z_1+W_1)=0 \Rightarrow W_1=-Z_1.$$
So $t(-Z_1)^2=Z_1^2$ so $ t=\delta^2/\epsilon=1$, contradicting the hypothesis that $\delta^2-\epsilon \ne 0$.  So  $W_4/Z_4 \ne 0/0$ as claimed. Also from  $W_1~Z_1 \ne 0$ we  know that, the point of $P_1$ is affine point, so from Proposition \eqref{P:J1}  we have  $w(2P_1)=W_4/Z_4$. 
\end{itemize}
 $~$
 
\item[(ii)] Assume that $W_0Z_0 \neq 0$. 
\begin{itemize}
\item If $P_2=P_1$ then $P_0=\cO$ so $W_0=0$, contradiction. Hence $P_1 \ne P_2$.

\item If $P_1= \infty,~P_2= \overline{\infty}$ then $Z_1=Z_2=0$ and $W_1W_2 \neq 0$, so $Z_3=W_0(tW_1W_2)^2 \ne 0$ and $W_3=0$, so $(W_3:Z_3) = (0:1)$. Hence  $W_3/Z_3 \ne 0/0$. Also from addition formula \eqref{eq:TEH1} we have $P_1+P_2=\mathcal{T}$, so $w(P_1+P_2)=w(\mathcal{T})=(0:1)$ as claimed.

\item If $P_1= \infty$ or $P_1= \overline{\infty}$ and  $~P_2 \ne \infty, \overline{\infty}$ then $Z_1=0$ , $Z_2 \ne 0$. We know that $w(\infty)=w(\overline{\infty})=(1:0)$, 
so $W_1=1$. Hence $W_3=Z_0Z_2^2$ and $Z_3=t^2W_0W_2^2$. Since $Z_0 Z_2 \ne  0$  we have $W_3 \ne 0$. Hence   $W_3/Z_3 \ne 0/0$ as claimed. Now, if $Z_3 =0$ then $P_1+P_2=\infty$ or $P_1+P_2=\overline{\infty}$, so $w(P_1+P_2)=(1:0)$. Also,  if $Z_3 \ne 0$ then $P_1+P_2$ is affine point, so from Proposition \eqref{P:J1}  we have  $w(P_1+P_2)=W_3/Z_3$. 
\end{itemize}
\noindent Assume from now on that  $P_1$ and $P_2$ be affine points and $P_1 \ne  P_2$.  
\begin{itemize}
\item If $P_2=-P_1$ then $W_1/Z_1=w(P_1)=w(-P_2)=w(P_2)=W_2/Z_2$, so $W_1Z_2=W_2Z_1$, so $W_3=0$.
Note that $W_1 W_2 \ne 0:$ if  $W_1 = 0$ then  $Z_1 \ne 0$, so  $w(P_1) =0$, so $P_1=\cO$ or $P_1=\mathcal{T}$, so $P_2=-P_1=P_1$, contradiction. 
Similar arguments apply to the case $W_2= 0$.

 We will show that $Z_3 \ne 0$, hence $(W_3:Z_3) = (0:1)$ and  $W_3/Z_3 \ne 0/0$ as claimed.

Assume that  $Z_3=0$, then $W_0(tW_1W_2-Z_1Z_2)=0$. But $W_0 \ne 0$, so $(tW_1W_2-Z_1Z_2)=0$ so $\frac{tW_2}{Z_2}=\frac{Z_1}{W_1}$.  In addition, we have $\frac{W_2}{Z_2}=\frac{W_1}{Z_1}$. Hence $\frac{tW_1}{Z_1}=\frac{Z_1}{W_1}$, so $tW_1^2-Z_1^2=0$. So from doubling formula we have $Z_4=0$, But from $P_2=-P_1$ we hve  $P_0=2P_1$, so  $Z_0=Z_4=0$, contradiction. Also we have $P_1+P_2=\cO$, so $w(P_3)=w(\cO)=(0:1)$ as calimed.

\item  $P_2 \ne \pm P_1$. 

If $W_3 = 0$, then $Z_0(W_1Z_2-W_2Z_1)=0$, but $Z_0 \ne 0$, so $W_1Z_2-W_2Z_1=0$. We know that, $P_1$ and $P_2$ are affine points, so $Z_1 \ne 0$ and $Z_2 \ne 0$, hence $W_1/Z_1=W_2/Z_2$, so $w(P_1)=w(P_2)$, hence $P_2=\pm P_1$,  contradiction. So $W_3 \ne 0$ and $W_3/Z_3 \ne 0$ as claimed. 
On the other hand, the points of $P_1$ and $P_2$ are affine points, so from proposition \eqref{P:J1}  we have  $w(P_1+P_2)=W_3/Z_3$. 
\end{itemize}
 \qed
\end{proof}

Now, we consider the family of all elliptic curves over $\F_q$ with full 2-torsion points, i.e. the family of Jacobi curves $\J_{\epsilon,\delta}$ with $\chi(\epsilon) =1$, and define the rational function $w$ by $w(x,y)=\sqrt{\epsilon}~x^2$. 
\begin{proposition}\label{P:J2}
		The Jacobi quartic curve $\J_{\epsilon,\delta}$ over $\F_q$ of odd characteristic with $\chi(\epsilon)=1$ has dADD-M formulas with the function $w$ by $w(x,y)=\sqrt{\epsilon} x^2$ and $e={2(\sqrt{\epsilon}-\delta )}/{\sqrt{\epsilon}}$.
		%
		\end{proposition}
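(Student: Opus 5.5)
The plan is to mirror the proof of Proposition~\ref{P:J1}: first show that $w(x,y)=\sqrt{\epsilon}\,x^2$ is an \emph{inverted} Montgomery-like function in the sense of Definition~\ref{def:MI}, with parameter $e=2(\sqrt{\epsilon}-\delta)/\sqrt{\epsilon}$. Then invoke Remark~\ref{rem2}: it says $1/w$ is a Montgomery-like function, so $\J_{\epsilon,\delta}$ has dADD-M formulas. Write $s=\sqrt{\epsilon}\in\F_q$, which exists since $\chi(\epsilon)=1$. Clearly $w(-P)=w(P)$.

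\textbf{Affine identities.} The curve equation gives, for $w_i=s x_i^2$,
\[
y_i^2=w_i^2+\tfrac{2\delta}{s}w_i+1=(w_i+1)^2-e\,w_i .
\]
\emph{Doubling.} From \eqref{eq:DbEJ} and $\epsilon x_1^4=w_1^2$,
\[
w_4=\frac{4s x_1^2y_1^2}{(1-w_1^2)^2}=\frac{4w_1\bigl((w_1+1)^2-e w_1\bigr)}{(w_1^2-1)^2}.
\]
\emph{Addition.} From \eqref{eq:AdEJ}, the $x$-coordinates of $P_1\pm P_2$ are $(x_1y_2\pm x_2y_1)/(1-\epsilon x_1^2x_2^2)$. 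Hence
\[
w_3w_0=\frac{s^2(x_1^2y_2^2-x_2^2y_1^2)^2}{(1-w_1w_2)^4}=\frac{(w_1y_2^2-w_2y_1^2)^2}{(1-w_1w_2)^4}.
\]
Substituting $y_i^2=w_i^2+\tfrac{2\delta}{s}w_i+1$, the linear terms cancel and
\[
w_1y_2^2-w_2y_1^2=(w_1-w_2)(1-w_1w_2),
\]
so $w_3w_0=(w_1-w_2)^2/(w_1w_2-1)^2$. These two identities are exactly the affine form of Definition~\ref{def:MI}, i.e. the formulas of Remark~\ref{rem:MI} with the roles of $w$ and $\omega$ swapped.

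\textbf{Projective version and exceptional points.} Next I would pass to projective form, $(W_4:Z_4)=(4W_1Z_1((W_1+Z_1)^2-eW_1Z_1):(W_1^2-Z_1^2)^2)$ and similarly for $(W_3:Z_3)$. I would then check, in the style of Proposition~\ref{P:J01}, that these are valid for all $P_1,P_2$ with $W_0Z_0\neq0$. Here $w(\cO)=w(\cT)=(0:1)$ and $w(\infty)=w(\overline{\infty})=(1:0)$. The case analysis is:
\begin{itemize}
\item $P_1\in\J[2]$: both sides give $(0:1)$, since $2P_1=\cO$.
\item $P_1$ or $P_2$ at infinity: use \eqref{eq:TEH1}, together with the coset description $P+\infty=(1/(s x),\dots)$. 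This shows $w(P+\infty)=1/w(P)$, consistent with the formula.
\item $P_2=-P_1$: $W_3=0$ and one must show $Z_3\neq0$, using $P_0=2P_1$ and $Z_0\neq0$.
\item Otherwise: check that the formulas do not degenerate to $0/0$.
\end{itemize}
The nondegeneracy arguments use $\delta^2\neq\epsilon$, which gives $e\neq0,4$. This is because $(W_1+Z_1)^2-eW_1Z_1$ and $W_1^2-Z_1^2$ vanishing together forces $W_1=\pm Z_1$, hence $e\in\{0,4\}$.

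\textbf{Main obstacle.} The algebra is routine once the identity $w_1y_2^2-w_2y_1^2=(w_1-w_2)(1-w_1w_2)$ is found. The main work is the careful exceptional-case bookkeeping for points at infinity and 2-torsion, which is needed for the ``for all points'' requirement of the definition.
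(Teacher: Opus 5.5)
Your proof is correct, but it takes a different route from the paper.

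\textbf{The paper's route.} The paper does no computation with the addition law here. It uses the degree-2 isogeny $\psi(x,y)=\bigl(1/(\sqrt{\epsilon}x^2),\,-y/(\epsilon x^3)\bigr)$ from $\J_{\epsilon,\delta}$ to the Montgomery curve $\M_{2\delta/\sqrt{\epsilon},\sqrt{\epsilon}}$, whose kernel is $\{\cO,\cT\}$. Lemma~\ref{ISOw} pulls back the Montgomery $x$-coordinate to the Montgomery-like function $\omega=1/(\sqrt{\epsilon}x^2)$, with $e=2-A=2(\sqrt{\epsilon}-\delta)/\sqrt{\epsilon}$. The paper then passes to $w=1/\omega$ via the remarks on inversion.

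\textbf{Your route.} You verify directly, as in Proposition~\ref{P:J1}, that $w=\sqrt{\epsilon}x^2$ is an inverted Montgomery-like function, and then apply Remark~\ref{rem2} in the right direction. Your rewriting $y^2=(w+1)^2-ew$ and your identity $w_1y_2^2-w_2y_1^2=(w_1-w_2)(1-w_1w_2)$ are both correct, and they give the same $e$.

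\textbf{What each buys.} The isogeny argument is two lines. It explains why $w$ is $\langle\cT\rangle$-invariant, since that is $\ker\psi$. It also essentially inherits the exceptional inputs from the Montgomery ladder, because $\omega=\w\circ\psi$ pointwise and $\psi$ is a homomorphism. Your route is self-contained and does not require checking that $\psi$ is an isogeny. The cost is that the full case analysis, in the style of Proposition~\ref{P:J01}, falls on you.

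\textbf{Cases to add.} Your list is missing some cases:
\begin{itemize}
\item Because $w$ is $\langle\cT\rangle$-invariant, $W_3=0$ occurs not only for $P_2=-P_1$ but also for $P_2=-P_1+\cT$. The second case is handled by the same argument, using $w_0=w(2P_1)$.
\item When $P_1,P_2$ are affine with $w_1w_2=1$ (respectively $w_1^2=1$ for doubling), the affine law \eqref{eq:AdEJ} does not apply. You must show separately, using $W_0Z_0\neq 0$, that $P_1+P_2$ (respectively $2P_1$) is a point at infinity and that the formula returns $(1:0)$.
\end{itemize}
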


\begin{proof}
The Jacobi quartic curve  $J_{\epsilon,\delta}$ with $\chi(\epsilon)=1$ and Montgomery curve $\M_{2\delta/\sqrt{\epsilon}\, ,\sqrt{\epsilon}}$ are isogenous \cite{AG,H15}  by the map
	
	\begin{equation*}
			\psi (x,y)=(\dfrac{1}{\sqrt{\epsilon}\, x^2},\dfrac{-y}{\epsilon\, x^3}).
		\end{equation*}
We know that Montgomery curve $\M_{A,B}$ has dADD-M formulas with the function $\w$ by $\w(X,Y)=X$ and $e=2-A$. So from Lemma \ref{ISOw}, $\omega=\w\circ \psi$ is a $w$-function. So we have 
$\omega(x,y)=\w( \psi(x,y))=\frac{1}{\sqrt{\epsilon}\, x^2}$. So from remak \ref{rem2},  Jacobi quartic curve $J_{\epsilon,\delta}$ has dADD-M formulas with the function $w=1 /\omega $. So $w(x,y)=\sqrt{\epsilon} x^2$ and $e=2-A=2-2\delta/\sqrt{\epsilon}=2(\sqrt{\epsilon}-\delta)/\sqrt{\epsilon}$. \qed
	\end {proof}
Similarly, we have the same mixed differential addition and doubling formulas as  \eqref{eq:M2} ,\eqref{eq:M4} with cost of $5\M+4\s+1\D$ and $3\M+7\s+1\D$, respectively.  
Also, for the Jacobi quartics curves $\J_{\epsilon,\delta}$ with $\chi(\epsilon)= \chi(\delta^2-\epsilon)=1$, by assuming
$r=\dfrac{\delta+\sqrt{\epsilon}}{\sqrt{\delta^2-\epsilon}}$
, we have the same mixed projective differential addition and doubling formulas as \eqref{eq:M6} with cost of $3\M+6\s+3\D$.
Notice, $\delta=\frac{r^2+1}{r^2-1}\sqrt{\epsilon}$. So, if the parameter $r$ is chosen to be small then the cost of mixed differential formulas is $3\M+6\s+1\D$.

\begin{example}
Let $p=2^{221}-3$ and $w(x,y)=\sqrt{\epsilon}\,x^2$. If $\epsilon=1$ and $\delta= 16254858882142650\\
\qquad 59385149248953355680048842182580119166027190412745$, then $\chi(\delta^2-\epsilon)=1$  and the  Jacobi quartic curve $\J_{\epsilon,\delta}$ over  $\F_p$ is of order $4\ell$, where $\ell$ is the prime 
\begin{gather*} 
\ell=842498333348457493583344221469363\\
\qquad 786243439662766001461011538653297.
\end{gather*}
The cost of the mixed differential addition and doubling formulas \eqref{eq:M6} is
$3\M+6\s+3\D$, where $2\D$ is the multiplication by the small constant $r=3096$ and one $\D$ is the multiplication by $1/r$.
\end{example}


\begin{example}
Let $p=2^{255}-19$ and $w(x,y)=\sqrt{\epsilon}\,x^2$. If $\epsilon=1$ and $\delta= 
539925905847637\\
\qquad 56419333460221746051882217470897926824441922037652483784944199$, then $\chi(\delta^2-\epsilon)=1$  and the  Jacobi quartic curve $\J_{\epsilon,\delta}$ over  $\F_p$ is of order $4\ell$, where $\ell$ is the prime 
\begin{gather*} 
\ell=144740111546645244279463731260859\\
\qquad 88481696135633646439402133744090170773571421.
\end{gather*}
The cost of the mixed differential addition and doubling formulas \eqref{eq:M6} is
$3\M+6\s+3\D$, where $2\D$ is the multiplication by the small constant $r=62094$ and one $\D$ is the multiplication by $1/r$.
\end{example}

\begin{table}[h!]
		\caption{}\label{tab2}
	\renewcommand{\arraystretch}{2}
	\begin{tabular}{l|l|l|l|l|l|l}
		\hline
	$w(P)$&$w(\cP_0)$ & $w(\cP_1)$ & $w(\cP_2)$ & $w(\cP_3)$ & $w(\cP_4)$ & \text{invariant on subgroup}\\
		\hline
	$\varpi_1=\sqrt{\epsilon}x^2 $& $\frac{1}{\varpi_1}$&$\varpi_1' $&$\frac{1}{\varpi_1'}$&$\varpi_1'' $&$\varpi_1'''$ & $<\mathcal{T}> \cong \Z_2$ \\
	\hline
\multicolumn{7}{c}{\tiny where
$ \varpi_1'=\dfrac{\sqrt{\epsilon}r_1y^2}{(1-\epsilon r_1x^2)^2},
\varpi_1''=  \dfrac{\sqrt{\beta_1}\,x+\sqrt{\alpha_1}\,y}{\sqrt{\alpha_1}\,y-\sqrt{\beta_1}\,x},
  \varpi_1'''= \dfrac{\sqrt{\beta_2}\,x+\sqrt{\alpha_2}\,y}{\sqrt{\beta_2}\,x-\sqrt{\alpha_2}\,y}.$
}
 \end{tabular}
\end{table}
\paragraph{{\bf Retrieving $x$ and $y$ from $w$-Coordinates.}}
Assume taht $P=(x_1,y_1),Q=(x_2,y_2),Q-P=(x_0,y_0), w(P)=w_1,w(Q)=w_2$ and $w(Q-P)=w_0$. 
From doubling formula \ref{eq:DbEJ} and $w$-function $w(x,y)=\sqrt{\epsilon}\; x^2$,  we have 
 \begin{equation}\label{eq:2kP}
2(x_1,y_1)=(\dfrac{2x_1y_1}{1-w_1^2},
\dfrac{(w_1^2+\frac{4\delta}{\sqrt{\epsilon}} +1)(w_1^2+1)+4w_1^2}{(1-w_1^2)^2} ).
\end{equation}
Also, if $x_0y_0\ne 0$ then
\begin{equation}\label{eq:Recovery}
x_1y_1=\dfrac{  w_2(1-w_0w_1)^2 -(w_0+w_1)(1+w_0w_1)- \frac{4\delta}{\sqrt{\epsilon}}w_0w_1 }{2\sqrt{\epsilon}\; x_0y_0}.
\end{equation}

So, one can use this formula to recover $2P$ given $Q-P,w(P),w(Q)$.
In particular, we can recover $2kP$ given $P,w(kP),w((k+1)P)$. So the Montgomery ladder can be used not only to compute $w(kP)$ given $w(P)$, but also to compute $2kP$ given $P$. Algorithm \ref{J:Recovery} describes the Montgomery ladder approach for point multiplication of $2kP$.
%
%
\begin{algorithm}
\caption{\; Recovering $2kP$ from $w$-coordinates dADD}\label{J:Recovery}
\begin{algorithmic}[1]
 \Statex {\bf Inputs :}
 {Elliptic curve  $ \J_{\epsilon,\delta}/F_{q}$,~ Point $P=(x,y) $ on $ \J_{\epsilon,\delta}$ } and integer k
 \Statex {\bf Output : $Q=2kP$} 
 \Statex {}
 \State  set: $w_0=\sqrt{\epsilon}\; x^2$
   \State Compute: $w_1\leftarrow w(kP), w_2\leftarrow w((k+1)P) $ \Comment{dADD }
   \State Compute: $x_1y_1$ from Eq.\ref{eq:Recovery}  \Comment{$kP=(x_1,y_1)$ }
   \State Compute $2kP$ from Eq. \ref{eq:2kP}  
   \State \Return $Q=2kP$

\end{algorithmic}
\end{algorithm}

We note that, in typical cryptography application, $P$ has odd order $\mathit{l}$. So one can replace $k$ by $k/2~ mod ~\mathit{l}$, obtaining $kP=2\left( (k/2)~mod ~l \right)P $ when given $P$ and $w((k/2)~mod ~l )P)$.

Let $k=(k_{m-1},\cdots, k_1,k_0)$ and $k'=(k_{m-1},\cdots, k_2,k_1)$, then  $k=2k'+k_0$. So  if $k$ be an even integer, $kP=2k'P$ and if $k$ be an odd integer, $kP=2k'P+P$. So, scaler multiplication can be modified  as in Algorithm \ref{J:Recovery2} for computing $kP$ when given $P,w(k'P)$ and $w((k'+1)P)$.

\begin{algorithm}
\caption{\; Retrieving  $kP$ from $w$-coordinates dADD}\label{J:Recovery2}
\begin{algorithmic}[1]
 \Statex {\bf Inputs :} \Statex {\hspace{32pt}  Elliptic curve  $ \J_{\epsilon,\delta}$ }
\Statex {\hspace{32pt} $P=(x,y) \in  \J_{\epsilon,\delta}$   } 
 \Statex {\hspace{32pt} $ k=(k_{m-1},\cdots, k_1,k_0)_2$} \Comment{$k$ is a positive integer}

 \Statex {\bf Output : $Q=kP$} 
 \Statex {}
 \State set: $k'=(k_{m-1},\cdots, k_1)$
 \State  set: $w_0=\sqrt{\epsilon}\; x^2$  
   \State Compute: $w_1\leftarrow w(k'P), w_2\leftarrow w((k'+1)P) $ \Comment{dADD }
   \State Compute: $x_1y_1$ from Eq.\ref{eq:Recovery}  \Comment{$k'P=(x_1,y_1)$ }
   \State Compute $2k'P$ from doubling formula \ref{eq:2kP}  
  \State set: $Q'=2k'P $  
  \State set: {$Q=Q'+k_0P$}\Comment{$k=2k'+k_0$ }
   \State \Return $Q$
\end{algorithmic}
\end{algorithm}

\subsection{$w$-function invariant on $<\mathcal{T},\infty>$}
Now, we consider another $w$-functions that are invariant for the coset of a point up to the full 2-torsion subgroup of the Jacobi quartic curve  $J_{\epsilon,\delta}$ over $\F_q$, i.e. 
$w(P)=w(Q)$ for all points $Q$ in $\J[2]+P$. Note that this $w$-functions could be applicable with the eliminating cofactors technique through point compression \cite{H15}.
\begin{proposition}\label{P:J3}
Let $w$ be a function on Jacobi quartic curve $\J_{\epsilon,\delta}$ over $\F_q$ given by $w(x,y)=2\delta x^2/y^2$. Let $P_1$ , $P_2$ be two points of Jacobi quartic curve $\J_{\epsilon,\delta}$. Consider the $w$-coordinate dADD for $P_1$ , $P_2$. If for $i=0,1,2,3,4,$ $w_i \in \F_q$, $w_1w_2 \ne 1/(\delta^2-\epsilon)  $ and $w_1^2 \ne 1/(a^2-d)$, then we have
\begin{gather}\label{eq:A3}
w_4	= \dfrac{4w_1((w_1-1)^2-\epsilon / \delta^2 w_1)}{((1-\epsilon / \delta^2) w_1^2-1)^2 }~~,~~w_3w_0 = \dfrac{(w_1-w_2)^2}{((1-\epsilon / \delta^2)w_1w_2-1)^2}~.
\end{gather}
\end{proposition}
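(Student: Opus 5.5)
The plan is to argue exactly as in Proposition~\ref{P:J1}, by direct computation from the affine addition law \eqref{eq:AdEJ} and the doubling law \eqref{eq:DbEJ}, but now tracking both coordinates. Write $u_i=x_i^2$. The curve equation gives $y_i^2=\epsilon u_i^2+2\delta u_i+1$, so
\[
w_i=\frac{2\delta u_i}{\epsilon u_i^2+2\delta u_i+1}.
\]
First I will check that $w(-P)=w(P)$ and that $w$ is constant on $P+\J_{\epsilon,\delta}[2]$. For the latter, use the coset description after Lemma~\ref{LemJ1}: the map $(x,y)\mapsto(1/(\sqrt{\epsilon}x),\,-y/(\sqrt{\epsilon}x^2))$ sends $x^2/y^2$ to itself, and $(x,y)\mapsto(-x,-y)$ also fixes $x^2/y^2$. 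This explains why the right-hand sides of \eqref{eq:A3} can depend only on the $w_i$. The same check should also pin down the correct constants in \eqref{eq:A3}; the condition $w_1^2\neq 1/(a^2-d)$ is presumably a typo for $w_1^2\neq 1/(\delta^2-\epsilon)$.

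\textbf{Doubling.} From \eqref{eq:DbEJ}, write $x(2P_1)^2=4u_1y_1^2/(1-\epsilon u_1^2)^2$ and $y(2P_1)^2=N^2/(1-\epsilon u_1^2)^4$, where $N=(y_1^2+2\delta u_1)(1+\epsilon u_1^2)+4\epsilon u_1^2$. Then
\[
w_4=\frac{8\delta u_1y_1^2}{N^2}.
\]
Substituting $y_1^2=\epsilon u_1^2+2\delta u_1+1$ makes $N$ a polynomial in $u_1$. Next I invert the relation $w_1\leftrightarrow u_1$ symmetrically: clearing denominators, both the numerator and the denominator of $w_4$ should be expressible through $\epsilon u_1^2+1$ and $u_1$. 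Dividing through by a suitable power of $y_1^2$ then turns everything into a polynomial in $w_1=2\delta u_1/y_1^2$. The target is the stated form
\[
\frac{4w_1\bigl((w_1-1)^2-\tfrac{\epsilon}{\delta^2}w_1\bigr)}{\bigl((1-\tfrac{\epsilon}{\delta^2})w_1^2-1\bigr)^2}.
\]

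\textbf{Differential addition.} I will compute $w_3w_0$ as
\[
w_3w_0=4\delta^2\,\frac{x_3^2x_0^2}{y_3^2y_0^2}.
\]
For the numerator, as in Proposition~\ref{P:J1},
\[
x_3x_0=\frac{u_1y_2^2-u_2y_1^2}{(1-\epsilon u_1u_2)^2}.
\]
For the denominator, $y_0$ is obtained from $y_3$ by $x_2\mapsto -x_2$. Writing $y_3=a+b$ with $a=y_1y_2(1+\epsilon u_1u_2)$ and $b=x_1x_2\bigl(2\delta(1+\epsilon u_1u_2)+2\epsilon(u_1+u_2)\bigr)$, we have $y_0=a-b$. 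Hence
\[
y_3y_0=\frac{y_1^2y_2^2(1+\epsilon u_1u_2)^2-u_1u_2\bigl(2\delta(1+\epsilon u_1u_2)+2\epsilon(u_1+u_2)\bigr)^2}{(1-\epsilon u_1u_2)^4},
\]
which is symmetric and polynomial in $u_1,u_2$. The denominators $(1-\epsilon u_1u_2)^4$ cancel in $w_3w_0$.

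\textbf{Main obstacle.} The hard step is factoring the numerator of $y_3y_0$ after substituting $y_i^2$. I expect it to equal $\bigl(y_1^2y_2^2-c\,u_1u_2\bigr)^2$ for a constant $c$, which up to normalisation corresponds to $(1-\epsilon/\delta^2)w_1w_2-1$ after dividing by $y_1^2y_2^2$. Likewise, $u_1y_2^2-u_2y_1^2=(u_1-u_2)(1-\epsilon u_1u_2)$, and after dividing by $y_1^2y_2^2$ this should become a multiple of $w_1-w_2$. To find the factorisation I would first test it on the specialisation $u_2=0$ and then compare coefficients. If the constants do not match \eqref{eq:A3} as printed, I would correct them consistently with the doubling case. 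The hypotheses $w_1w_2\neq 1/(\delta^2-\epsilon)$ and $w_1^2\neq 1/(\delta^2-\epsilon)$ are used only to guarantee that the final denominators are nonzero.
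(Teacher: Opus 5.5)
Your overall route is the paper's: compute $w_4$ and $w_3w_0$ directly from \eqref{eq:DbEJ} and \eqref{eq:AdEJ}, obtain $y_0$ from $y_3$ by a sign change so that $y_3y_0$ is a difference of squares, and use the curve equation to reduce to $u_i/y_i^2=w_i/(2\delta)$. But the two computations that carry the content are set up incorrectly, so the plan as written would not close.

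\textbf{Doubling.} You have $w_4=2\delta\,x(2P_1)^2/y(2P_1)^2=8\delta u_1y_1^2(1-\epsilon u_1^2)^2/N^2$, but you dropped the factor $(1-\epsilon u_1^2)^2$. Without it, numerator and denominator have different degrees as forms in $(y_1^2,u_1)$, so no division by a power of $y_1^2$ can turn the expression into a function of $w_1$. The paper eliminates $1+\epsilon u_1^2=y_1^2-2\delta u_1$ (rather than $y_1^2$). This gives $N=y_1^4-4(\delta^2-\epsilon)u_1^2$ and $(1-\epsilon u_1^2)^2=(y_1^2-2\delta u_1)^2-4\epsilon u_1^2=y_1^4-4\delta u_1y_1^2+4(\delta^2-\epsilon)u_1^2$. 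Dividing by $y_1^8$ then yields
\[
w_4=\frac{4w_1\bigl((w_1-1)^2-\tfrac{\epsilon}{\delta^2}w_1^2\bigr)}{\bigl((1-\tfrac{\epsilon}{\delta^2})w_1^2-1\bigr)^2}.
\]
So the printed $\tfrac{\epsilon}{\delta^2}w_1$ is a typo for $\tfrac{\epsilon}{\delta^2}w_1^2$; the paper's own derivation ends with $w_1^2$. Also, the denominators vanish exactly when $w_1^2$, resp.\ $w_1w_2$, equals $\delta^2/(\delta^2-\epsilon)$, not $1/(\delta^2-\epsilon)$.

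\textbf{Addition.} The $(1-\epsilon u_1u_2)$-denominators do not cancel: $x_3x_0$ carries $(1-\epsilon u_1u_2)^{-2}$ while $y_3y_0$ carries $(1-\epsilon u_1u_2)^{-4}$. Hence, up to sign, $x_3x_0/(y_3y_0)=(u_1y_2^2-u_2y_1^2)(1-\epsilon u_1u_2)^2/(a^2-b^2)$. The factorization you anticipate, $a^2-b^2=(y_1^2y_2^2-c\,u_1u_2)^2$, is false: your own test $u_2=0$ gives $a^2-b^2=y_1^2$, not $y_1^4$. It would also put a fourth power in the denominator of $w_3w_0$. The identity the proof actually needs, and which the paper uses, is
\[
a^2-b^2=(1-\epsilon u_1u_2)^2\bigl(y_1^2y_2^2-4(\delta^2-\epsilon)u_1u_2\bigr).
\]
With $p=u_1u_2$ and $s=u_1+u_2$, this is equivalent to $\epsilon y_1^2y_2^2=\bigl(\delta(1+\epsilon p)+\epsilon s\bigr)^2-(\delta^2-\epsilon)(1-\epsilon p)^2$, which you can check by expanding $y_1^2y_2^2$. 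Its factor $(1-\epsilon u_1u_2)^2$ absorbs the leftover one, giving $w_3w_0=4\delta^2(u_1y_2^2-u_2y_1^2)^2/\bigl(y_1^2y_2^2-4(\delta^2-\epsilon)u_1u_2\bigr)^2$. Dividing numerator and denominator by $y_1^4y_2^4$ then gives exactly the printed formula for $w_3w_0$, with no constants to correct. There is no need to factor $u_1y_2^2-u_2y_1^2$ as $(u_1-u_2)(1-\epsilon u_1u_2)$: divided by $y_1^2y_2^2$ it is directly $(w_1-w_2)/(2\delta)$.
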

\begin{proof}
Assume that $P_1=(x_1,y_1)$ and $P_2=(x_2,y_2)$  be two points of Jacobi quartic curve $\J_{\epsilon,\delta}$ and $w(x,y)=2\delta x^2/y^2$. Using doubling formula \eqref{eq:DbEJ} we have
\begin{equation} \label{eq:AP1}
w_4=2\delta \left(\dfrac{x(2P_1)}{y(2P_1)}\right)^2=\dfrac{8\delta x_1^2y_1^2{(1-\epsilon x_1^4)^2}}{(~(y_1^2+2\delta x_1^2)(1+\epsilon x_1^4)+4\epsilon x_1^4~)^2}  
\end{equation}
But from curve equation \eqref{eq:EJacob}, we have $1+\epsilon x_1^4=y_1^2-2\delta x_1^2$. On the other hand
\begin{equation*}
(1-\epsilon x_1^4)^2=(1+\epsilon x_1^4)^2-4\epsilon x_1^4=(y_1^2-2\delta x_1^2 )^2-4\epsilon x_1^4.
\end{equation*}
Hence
\begin{equation*}
(1-\epsilon x_1^4)^2=y_1^4+4(\delta^2-\epsilon)x_1^4-4\delta x_1^2y_1^2.
\end{equation*}
From formula \eqref{eq:AP1}, we have
\begin{equation*}
w_4=\dfrac{8\delta x_1^2/y_1^2 \left(1+(\delta^2-\epsilon)(2x_1^2/y_1^2) ^2-4\delta x_1^2/y_1^2\right)}{\left(1-(\delta^2-\epsilon)(2x_1^2/y_1^2) ^2\right)^2}  
\end{equation*}
Since $w_1=2\delta x_1^2/y_1^2$, we obtain   
\begin{gather}\label{eq:AP2}
w_4	= \dfrac{4w_1((w_1-1)^2-\epsilon / \delta^2 w_1^2)}{((1-\epsilon / \delta^2) w_1^2-1)^2 }.
\end{gather}
Now, let $w_3=w(P_1+P_2)$ and $w_0=w(P_2-P_1)$. 
Using the addition formula \eqref{eq:AdEJ} we have
  \begin{equation*}
 w_3=2\delta \left(~\dfrac{( x_2y_1+x_1y_2) (1-\epsilon x_1^2x_2^2) }{(y_1y_2+2\delta x_1x_2)(1+\epsilon x_1^2x_2^2)+2\epsilon x_1x_2(x_1^2x_2^2)} \right)^2,
 \end{equation*}
and
  \begin{equation*}
 w_0=2\delta \left(~\dfrac{( x_2y_1-x_1y_2) (1-\epsilon x_1^2x_2^2) }{(y_1y_2-2\delta x_1x_2)(1+\epsilon x_1^2x_2^2)-2\epsilon x_1x_2(x_1^2x_2^2)} \right)^2.
 \end{equation*}
Assume that
\begin{equation*}
\begin{array}{c}
   \quad A= (y_1y_2+2\delta x_1x_2)(1+\epsilon x_1^2x_2^2)+2\epsilon x_1x_2(x_1^2x_2^2)  ~,\vspace*{3pt}\\
B=(y_1y_2-2\delta x_1x_2)(1+\epsilon x_1^2x_2^2)-2\epsilon x_1x_2(x_1^2x_2^2)~.
\end{array}
\end{equation*}
%
Using curve equation \eqref{eq:EJacob}, we have   $AB=(1-\epsilon x_1^2x_2^2)^2(y_1^2y_2^2-4(\delta^2-\epsilon)x_1^2x_2^2)$.
Therefore
 \begin{equation*}
 w_3w_0=\left(\dfrac{2\delta ( x_2^2y_1^2-x_1^2y_2^2) (1-\epsilon x_1^2x_2^2)^2 }{AB} \right)^2=\left(\dfrac{2\delta ( x_2^2y_1^2-x_1^2y_2^2) (1-\epsilon x_1^2x_2^2)^2 }{(1-\epsilon x_1^2x_2^2)^2(y_1^2y_2^2-4(\delta^2-\epsilon)x_1^2x_2^2)}\right)^2
 \end{equation*} 
\begin{equation*}
 =\left(~\dfrac{2\delta ( x_2^2y_1^2-x_1^2y_2^2)  }{(y_1^2y_2^2-4(\delta^2-\epsilon)x_1^2x_2^2)} \right)^2.
 \end{equation*}
By the substitution $w_i=2\delta x_i^2/y_i^2$ for $i=1,2$, we have
\begin{equation*} \label{AP3}
 w_3w_0=\dfrac{(w_1-w_2)^2}{((1-\epsilon / \delta^2)w_1w_2-1)^2}.
\end{equation*}   \qed
\end{proof}
As before assume that $w_0$ is given as a field element, and the inputs $w_1, w_2$ are given as fractions
$W_1/Z_1$ , $W_2/Z_2$ and the outputs $w_4, w_3 $ are given as fraction $W_4/Z_4$ and  $W_3/Z_3$. From
Eq.~\eqref{eq:A3} the explicit projective formulas are given by
\begin{equation}
\label{eq:P01}
\begin{array}{c}
\dfrac{W_4}{Z_4}= \dfrac{4W_1Z_1(~(W_1-Z_1)^2-\epsilon / \delta^2 \ W_1^2)} {((1-\epsilon / \delta^2)W_1^2-Z_1^2)^2}, \vspace*{7pt}\\
\dfrac{W_3}{Z_3}= \dfrac{Z_0~(W_1Z_2-W_2Z_1)^2}  {W_0~((1-\epsilon / \delta^2)W_1W_2 -Z_1Z_2)^2}\enspace. 
\end{array}
\end{equation}
If we set $Z_0=1$, then the following mixed projective $w$-coordinates differential addition and doubling formulas
have the total cost $5\M+6\s+2\D $ as follows:
\begin{equation}
\label{eq:M8}
\begin{array}{c}
A_1=(W_1-Z_1) ,\  A_2=(W_2+Z_2), B_1=W_1W_2,\ B_2=Z_1Z_2 \vspace*{3pt}\\
C=A_1A_2~ ,~\ D=B_1-B_2~,~\ 	E=W_1^2+Z_1^2-A_1^2 , \vspace*{3pt}\\
W_4	= 2E(A_1^2-(\epsilon / \delta^2) \ W_1^2)~,~Z_4=(W_1^2-Z_1^2-(\epsilon / \delta^2) \ W_1^2)^2, \vspace*{3pt}\\
W_3=(C-D)^2~,~Z_3=w_0((1-\epsilon / \delta^2) \ B_1-B_2)^2 
\enspace. 
\end{array}
\end{equation}

\begin{proposition}\label{P:J4}
The Jacobi quartic curve $\J_{\epsilon,\delta}$ over the finite field $\F_q$ of odd characteristic with $\chi(\delta^2-\epsilon)=1$ has dADD-M formulas with the function $w$ given by $$w(x,y)=2\sqrt{\delta^2-\epsilon}\;\dfrac{x^2}{y^2}$$ and $e={2(\delta+\sqrt{\delta^2-\epsilon})}/{\sqrt{\delta^2-\epsilon}}$.
\end{proposition}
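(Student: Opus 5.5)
The plan is to reduce Proposition~\ref{P:J4} to the affine identities of Proposition~\ref{P:J3} by a constant rescaling of the $w$-function. I would then conclude with Remark~\ref{rem2}, in the same spirit as Proposition~\ref{P:J2}.

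Put $s=\sqrt{\delta^2-\epsilon}\in\F_q$, which exists because $\chi(\delta^2-\epsilon)=1$. Write $w'(x,y)=2\delta x^2/y^2$ for the function of Proposition~\ref{P:J3}, so that the function in the statement is $w=(s/\delta)\,w'$, i.e.\ $w'=(\delta/s)w$. (If $\delta=0$ one can bypass $w'$ and redo the short computation of Proposition~\ref{P:J3} directly with $w=2s x^2/y^2$; the algebra is the same.)

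For the addition law I substitute $w'_i=(\delta/s)w_i$ into $w'_3w'_0=(w'_1-w'_2)^2/((1-\epsilon/\delta^2)w'_1w'_2-1)^2$. Since $1-\epsilon/\delta^2=s^2/\delta^2$, both sides acquire the same factor $\delta^2/s^2$. This gives
\[ w_3w_0=\frac{(w_1-w_2)^2}{(w_1w_2-1)^2}, \]
which is exactly the inverted Montgomery-like addition of Remark~\ref{rem:MI}.

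For doubling I use the form actually derived in the proof of Proposition~\ref{P:J3}, namely Eq.~\eqref{eq:AP2} with $\epsilon w_1^2/\delta^2$; the displayed statement has a misprint $w_1$ there. Substituting $w'_i=(\delta/s)w_i$, the factor $\delta/s$ cancels and the denominator becomes $(w_1^2-1)^2$. The numerator becomes
\[ 4w_1\Big(\tfrac{\delta^2-\epsilon}{s^2}w_1^2-\tfrac{2\delta}{s}w_1+1\Big)=4w_1\Big((w_1+1)^2-\tfrac{2(s+\delta)}{s}\,w_1\Big). \]
This is the inverted Montgomery-like doubling with $e=2(\delta+s)/s$. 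By Remark~\ref{rem:MI} and Remark~\ref{rem2}, the function $1/w$ is then Montgomery-like, so $\J_{\epsilon,\delta}$ has dADD-M formulas as in Definition~\ref{def:MI}.

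An alternative route, parallel to Proposition~\ref{P:J2}, would compose an isogeny to a Montgomery curve, for instance \eqref{JM2} after moving to a model with $\epsilon$ a square, and apply Lemma~\ref{ISOw}. I would only use this as a cross-check, because here $\epsilon$ need not be a square.

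The main obstacle is that Definition~\ref{def:MI} requires the projective identities for \emph{all} $P_1,P_2$ with $W_0Z_0\neq0$, whereas Proposition~\ref{P:J3} only gives affine identities under genericity hypotheses. I would therefore run a case analysis modeled on Proposition~\ref{P:J01}. The cases are:
\begin{itemize}
\item points with $y=0$, where $w=(1:0)$; these are the $4$-torsion points $\pm Q_1,\pm Q_2$, and they now lie over $\F_q$;
\item points with $x=0$, where $w=(0:1)$, namely $\cO$ and $\cT$;
\item the points at infinity;
\item the degenerate relations $P_2=\pm P_1$ and $w_1w_2=1$ or $w_1^2=1$.
\end{itemize}
In each case I would check from the addition formulas \eqref{eq:AdEJ} and \eqref{eq:TEH1}, together with Table~\ref{Tab:grp}, that the projective output is not $(0:0)$ and equals $w$ of the true sum.
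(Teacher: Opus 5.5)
Your proposal is correct and is essentially the paper's own argument. The paper only says the proof is similar to that of Proposition~\ref{P:J3}, and your rescaling $w=(\sqrt{\delta^2-\epsilon}/\delta)\cdot 2\delta x^2/y^2$ of those identities makes that computation explicit: it uses the corrected doubling \eqref{eq:AP2}, redoes the case $\delta=0$ directly, and gives the inverted Montgomery-like form with $e=2(\delta+\sqrt{\delta^2-\epsilon})/\sqrt{\delta^2-\epsilon}$.

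One small slip in your case list: $\chi(\delta^2-\epsilon)=1$ puts $r_1,r_2$ in $\F_q$ but does not by itself make $\pm Q_1,\pm Q_2$ rational, since that needs $\chi(r_i)=1$. This does not affect the argument.
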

\begin{proof}
The proof is similar to the proof of Proposition \eqref{P:J3}. \qed
\end{proof}

Again, we have the same projective and mixed differential formulas as \eqref{eq:M2} ,\eqref{eq:M4}. 
on the other hand, for the Jacobi quartics curves $\J_{\epsilon,\delta}$ with $\chi(\epsilon)= \chi(\delta^2-\epsilon)=1$, by assuming 
$r=\frac{\sqrt{\epsilon}}{\delta+\sqrt{\delta^2-\epsilon}}$, we have the same mixed differential addition and doubling formulas as \eqref{eq:M6}.
\begin{proposition}\label{P:J7}
The Jacobi quartic curve $\J_{\epsilon,\delta}$ over the finite field $\F_q$ of odd characteristic with  $\chi(2\sqrt{\epsilon}(\sqrt{\epsilon}-\delta))=1$ has dADD-M formulas with the function $w$ given by $$w(x,y)=2\sqrt{2\sqrt{\epsilon}(\sqrt{\epsilon}-\delta)}\;(\dfrac{x}{1+\sqrt{\epsilon} x^2})^2$$ and
 $e=(2\sqrt{\epsilon}+\sqrt{2\sqrt{\epsilon}(\sqrt{\epsilon}-\delta)}~)^2/(2\sqrt{\epsilon}\sqrt{2\sqrt{\epsilon}(\sqrt{\epsilon}-\delta)})$.
\end{proposition}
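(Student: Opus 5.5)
The plan is to follow the pattern of the proof of Proposition~\ref{P:J2}: transport the Montgomery $x$-coordinate formulas to $\J_{\epsilon,\delta}$ through an isogeny, and then invert the resulting function. Write $a=\sqrt{\epsilon}$, so the curve is $\J_{a^2,\delta}$. Put $s=\sqrt{2a(a-\delta)}$; the hypothesis $\chi(2\sqrt{\epsilon}(\sqrt{\epsilon}-\delta))=1$ says exactly that $s\in\F_q$.

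First I invoke the isogeny \eqref{JMont3} from $\J_{a^2,\delta}$ to the Montgomery curve $\M_{A,B}$, where
\[
A=\frac{-3a+\delta}{s},\qquad B=\frac{32s}{(a+\delta)^2},
\]
and whose first coordinate is $\psi_x(x,y)=(1+ax^2)^2/(s\,x^2)$. I will treat the exact scalar in $\psi_x$ as part of the check described below. Because $a,s\in\F_q$, this map is defined over $\F_q$. The degenerate case $a+\delta=0$, where $B$ is undefined, needs a brief separate remark.

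Next, the Montgomery curve $\M_{A,B}$ has the Montgomery-like function $\w(X,Y)=X$ with parameter $e=2-A$. By Lemma~\ref{ISOw}, the pullback $\omega=\w\circ\psi=(1+ax^2)^2/(s x^2)$ satisfies the same doubling relation and the same differential addition relation. Hence $\omega$ is a Montgomery-like function on $\J_{\epsilon,\delta}$ with the same $e$. By Remark~\ref{rem:MI}, swapping coordinates shows that $w=1/\omega$ satisfies the inverted Montgomery-like formulas of Definition~\ref{def:MI}. Then, by Remark~\ref{rem2} and exactly as in Proposition~\ref{P:J2}, the curve has dADD-M formulas with $w$. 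This $w$ is the stated function up to the scalar normalization, that is, up to whether the constant is $s$ or $2s$.

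It remains to match the constant $e$, which is a short computation. Using $s^2=2a(a-\delta)$, we get $s/(2a)=(a-\delta)/s$. Therefore
\[
\frac{(2a+s)^2}{2as}=\frac{2a}{s}+2+\frac{s}{2a}=2+\frac{3a-\delta}{s}=2-A,
\]
which is the claimed $e$.

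I expect the main obstacle to be justifying \eqref{JMont3} with the correct scaling. The factor in front of $x^2/(1+ax^2)^2$ must be exactly the one for which the pulled-back relation has the normalized form required by Definition~\ref{def:Mf}. I would verify this directly by composing \eqref{TEMo2} with \eqref{JTE}: under \eqref{JTE} we have $u=2x/(1+ax^2)$, and \eqref{TEMo2} gives $X=1/(\sqrt{ad}\,u^2)$. This yields $X=(1+ax^2)^2/(4\sqrt{ad}\,x^2)$ for the twisted Edwards parameters attached to $\J_{a^2,\delta}$, followed by the rescaling $X\mapsto X/\sqrt{B}$ of the Weierstrass-to-Montgomery step. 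From this I would read off the constant and confirm that it agrees with $2s$ in the statement. If a discrepancy appears, I would instead verify the doubling and differential addition identities directly, as in Proposition~\ref{P:J3}, using the curve equation to eliminate $y^2$. Exceptional inputs, namely $w_0\in\{0,\infty\}$, are excluded exactly as in Definition~\ref{def:Mf}.
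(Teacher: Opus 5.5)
Your strategy is the paper's: pull back the Montgomery $X$-coordinate along the isogeny \eqref{JMont3}, apply Lemma~\ref{ISOw}, invert via Remarks~\ref{rem:MI} and~\ref{rem2}, and check $e=2-A$. Your computation of $e$ is correct.

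There is, however, a gap at the step you yourself flag: the scalar, which you leave open. It comes from misreading \eqref{JMont3}. In your notation $a=\sqrt{\epsilon}$, $s=\sqrt{2a(a-\delta)}$, its first coordinate is $(1+ax^2)^2/(2s\,x^2)$, not $(1+ax^2)^2/(s\,x^2)$. Quoting it correctly gives exactly the stated $w=2s\,x^2/(1+ax^2)^2$ and closes the argument.

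The scalar is not a cosmetic normalization. Let $\omega_c=(1+ax^2)^2/(c\,x^2)$, and translate by its zero $Q_4=(\sqrt{\alpha_2},\sqrt{\beta_2})$. This gives $x(P+Q_4)=(\sqrt{\beta_2}\,x+\sqrt{\alpha_2}\,y)/(1+ax^2)$. The curve equation, together with $a\alpha_2=-1$ and $a\beta_2=2(a-\delta)$, yields $1+a\,x(P+Q_4)^2=2a\sqrt{\beta_2}\,x(\sqrt{\beta_2}\,x+\sqrt{\alpha_2}\,y)/(1+ax^2)^2$. Hence $\omega_c(P)\,\omega_c(P+Q_4)=4s^2/c^2$. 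On $\M_{A,B}$ one has $X(R+(0,0))=1/X(R)$, and $\psi(Q_4)=(0,0)$, so this forces $c=\pm 2s$.

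Equivalently, $\omega_c(Q_1)=2(a-\delta)/c$ and $\omega_c(Q_3)=4a/c$ must be the two roots of $X^2+AX+1$. With $c=s$ their product would be $4$, so even the value of $A$ you quote is compatible only with $c=2s$.

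Your fallback check also hides a trap. In \eqref{JTE} we have $\delta=a-2d$, so $d=(a-\delta)/2$. Composing \eqref{JTE} with \eqref{TEMo2} gives $X=(1+ax^2)^2/(4\sqrt{ad}\,x^2)$, and $4\sqrt{ad}=2s$ already. No further rescaling $X\mapsto X/\sqrt{B}$ may be applied: that step is already built into \eqref{TEMo2} through \eqref{MonMon}, and repeating it would destroy the normalization.

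Finally, the case $a+\delta=0$ needs no separate treatment. It means $\delta^2=\epsilon$, which the hypothesis $\epsilon(\delta^2-\epsilon)\neq 0$ excludes, and the same hypothesis gives $s\neq 0$.
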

\begin{proof}
From isogeny map \eqref{JMont3} the Jacobi quartic curve $J_{\epsilon,\delta}$ with $\chi(\epsilon)=1$ and Montgomery curve $\M_{A,B}$
 are isogenous by the map
\begin{gather*}
\psi (x,y)=( \dfrac{(1+\sqrt{\epsilon}\,x^2)^2}{2\sqrt{2\sqrt{\epsilon}(\sqrt{\epsilon}-\delta)}\;x^2},\dfrac{(\sqrt{\epsilon}+\delta)(1-\epsilon x^4)y}{32\sqrt{\epsilon}(\delta - \sqrt{\epsilon})\;x^3}).
\end{gather*}

But the Montgomery curve $\M_{A,B}$ has dADD-M formulas with the function $v$ by  $v(X,Y)=X$ and $e=2-A$. 
So from Lemma \eqref{ISOw}, $\omega=v\circ \psi$ is a $w$-function. So we have 
$\omega(x,y)=v( \psi(x,y))=\dfrac{(1+\sqrt{\epsilon}\,x^2)^2}{2\sqrt{2\sqrt{\epsilon}(\sqrt{\epsilon}-\delta)}\;x^2}$. Then, for function $\omega=1/w$ on Jacobi quartic curve $J_{\epsilon,\delta}$ we have $\omega(x,y)=v( \psi(x,y))=2\sqrt{2\sqrt{\epsilon}(\sqrt{\epsilon}-\delta)}\;(\dfrac{x}{1+\sqrt{\epsilon}\,x^2})^2$
and  $e=2-\dfrac{-3\sqrt{\epsilon}+\delta}{\sqrt{2\sqrt{\epsilon}(\sqrt{\epsilon}-\delta)}}=(2\sqrt{\epsilon}+\sqrt{2\sqrt{\epsilon}(\sqrt{\epsilon}-\delta)}~)^2/(2\sqrt{\epsilon}\sqrt{2\sqrt{\epsilon}(\sqrt{\epsilon}-\delta)})$. \qed
\end {proof}
Similarly, we have the same mixed differential addition and doubling formulas as  \eqref{eq:M2} ,\eqref{eq:M4}. 
Also, for the Jacobi quartics curves $\J_{a^2,\delta}$ with $\chi(2\sqrt{\epsilon}(\sqrt{\epsilon}-\delta))=1$, by assuming
$r=\dfrac{2\sqrt{\epsilon}-\sqrt{2\sqrt{\epsilon}(\sqrt{\epsilon}-\delta)}}{2\sqrt{\epsilon}+\sqrt{2\sqrt{\epsilon}(\sqrt{\epsilon}-\delta)}}$
, we have the same mixed projective differential addition and doubling formulas as \eqref{eq:M6} with cost of $3\M+6\s+3\D$. So, if the parameter $r$ is chosen to be small then the cost of mixed differential formulas is $3\M+6\s+1\D$. 

\begin{example}
Let $p=2^{221}-3$ and $w(x,y)=2\sqrt{2\sqrt{\epsilon}(\sqrt{\epsilon}-\delta)}\, \left(\dfrac{x}{1+\sqrt{\epsilon}\,x^2}\right)^2$. If $\epsilon=1$, $\delta=
3208532527536442342869090771709507549919518828153330010014177629343$ then the  Jacobi quartic curve $\J_{\epsilon,\delta}$ over  $\F_p$ is of order $8\ell$, where $\ell$ is the prime 
\begin{gather*} 
\ell=421249166674228746791672110734681\\
\qquad 565429440931821392159528718493639.
\end{gather*}
The cost of the mixed differential addition and doubling formulas \eqref{eq:M6} is
$3\M+6\s+3\D$, where $2\D$ is the multiplication by the small constant $r=3096$ and one $\D$ is the multiplication by $1/r$.
\end{example}
\begin{tiny}{
\begin{table}[h!]
		\caption{}\label{tab3}
	\renewcommand{\arraystretch}{2}
	\begin{tabular}{l|l|l|l|l|l|l}
		\hline
	$w(P)$&$w(\cP_0)$ & $w(\cP_1)$ & $w(\cP_2)$ & $w(\cP_3)$ & $w(\cP_4)$ & \text{invariant on subgroup}\\
		\hline
		$\varpi_2=2\sqrt{\delta ^2-\epsilon}~\frac{x^2}{y^2} $ &$\varpi_2$&$\frac{1}{\varpi_2}$&$\frac{1}{\varpi_2}$&$\varpi_2'$ &$\frac{1}{\varpi_2'}$ &$<\mathcal{T},\infty> \cong  \Z_2 \times \Z_2$	\\
		\hline
		$\varpi_3=\frac{2\sqrt{2(\epsilon -\sqrt{\epsilon} \delta)}\;x^2}{(1+\sqrt{\epsilon} x^2)^2} $&$\varpi_3 $&$\varpi_3'$ &$\varpi_3'$&$\frac{1}{\varpi_3'}$&$\frac{1}{\varpi_3} $&$ <\mathcal{T},\infty> \cong  \Z_2 \times \Z_2$\\
		\hline
\multicolumn{7}{c}{\tiny where 
$\varpi_2'=\dfrac{\sqrt{\delta ^2-\epsilon}}{\delta+\sqrt{\epsilon}} \dfrac{(1-\sqrt{\epsilon}\,x^2)^2}{(1+\sqrt{\epsilon}\,x^2)^2},$
$ \varpi_3'=\dfrac{2y^2}{\sqrt{\beta_2}(1-\sqrt{\epsilon}x^2)^2}.$
}
     \end{tabular}
\end{table}
}
\end{tiny}
\subsection{$w$-function invariant on $<Q_2>$}
In this section, we consider a $w$-function that is invariant for the coset of the point $Q_2$, i.e. 
$w(P)=w(Q)$ for all points $Q$ in $<Q_2>+P$.
\begin{proposition}\label{P:J5}
		The Jacobi quartic curve $\J_{\epsilon,\delta }$ over the finite field $\F_q$ of odd characteristic with $\chi(\delta^2-\epsilon)=1$ has dADD-M formulas with the function $w$ given by $$w(x,y)=\dfrac{sx^2-\epsilon x^4}{1-sx^2},$$  where $s=-\delta-\sqrt{\delta^2-\epsilon}$ and 
		$ e={4(-\delta +\sqrt{\delta^2-\epsilon})}/{(-\delta -\sqrt{\delta^2-\epsilon})}$ .
		\end{proposition}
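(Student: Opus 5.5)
The plan is to reduce the statement to the known Montgomery case through Lemma~\ref{ISOw} and Remark~\ref{rem2}, as in the proofs of Propositions~\ref{P:J2} and~\ref{P:J7}. Since $\chi(\delta^2-\epsilon)=1$, write $f=\sqrt{\delta^2-\epsilon}$ and put $a=-\delta+f$, $d=-\delta-f$. This is the parametrisation from the proof of Proposition~\ref{JQTE}, so $\epsilon=ad$, $s=d$, and $\J_{\epsilon,\delta}: y^2=(1-dx^2)(1-ax^2)$.

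First I rewrite the function. Since $sx^2-\epsilon x^4=dx^2(1-ax^2)$, we get
$w(x,y)=dx^2(1-ax^2)/(1-dx^2)$. Under the isomorphism $\phi(x,y)=(x,y/(1-dx^2))$ onto $\TE_{a,d}$, the curve equation gives $v^2=(1-au^2)/(1-du^2)$. Hence $w\circ\phi^{-1}(u,v)=d\,u^2v^2$. Two checks come next. The function is even, since it depends only on $x^2$. It is also invariant under translation by $Q_2$: we have $r_2=d/\epsilon=1/a$, so $Q_2=(1/\sqrt a,0)$. Using the formula for $P+Q_2$ displayed after the Corollary, with $y_2^2=4r_2(\delta^2-\epsilon)x^2/(1-\epsilon r_2x^2)^2$, substituting should show that $w(P+Q_2)=w(P)$. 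This confirms that $w$ factors through the $2$-isogeny with kernel $\langle Q_2\rangle$ (or $\langle Q_2\rangle$ together with negation).

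Second, I produce an explicit isogeny $\psi$ from $\J_{\epsilon,\delta}$ to a Montgomery curve $\M_{A,B}$ whose $x$-coordinate is a constant multiple of $1/w$. I will try composing $\phi$ with the map (\ref{TEMon}) to get to $\M_{A_0,B_0}$, $A_0=2(a+d)/(a-d)$, and then applying the $2$-isogeny (\ref{MonMon}). That composite has $x$-coordinate proportional to $B_0Y^2/X^2$, which is invariant under the $2$-torsion point $(0,0)$ of $\M_{A_0,B_0}$. I then check whether it equals $c/(du^2v^2)$ for a constant $c$. If the constant comes out as a non-square or the kernel is the wrong $2$-torsion point, I will instead use the isogeny (\ref{JM2}) or a translate of it by $\cT$ or $\infty$, choosing the one whose kernel contains $Q_2$. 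Once $x\circ\psi=1/w$ holds exactly, after rescaling $X$ (which only changes $B$, not $A$), Lemma~\ref{ISOw} makes $1/w$ a Montgomery-like function with $e=2-A$. Remark~\ref{rem2} then transfers the dADD-M formulas to $w$.

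The main obstacle is identifying the correct isogeny and normalisation, so that the constant in front of $x$ is exactly $1$ and the resulting $A$ gives $e=4(-\delta+f)/(-\delta-f)=4a/d$. If the isogeny bookkeeping becomes messy, I will fall back on direct verification, mirroring Proposition~\ref{P:J3}. For doubling, in $(u,v)$ coordinates I compute $w(2P)=d\,u_4^2v_4^2$ from the twisted Edwards doubling law. I then reduce with the curve equation to a rational function of $w_1=du^2v^2$ and compare with $4w_1((w_1+1)^2-ew_1)/(w_1^2-1)^2$. For addition, I form $w_3w_0$ from $u_3v_3$ and $u_0v_0$, where the $\pm$ signs give a product of conjugates. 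I then simplify to $(w_1-w_2)^2/(w_1w_2-1)^2$. Matching the coefficient of $w_1^2$ in the doubling numerator is what pins down $e=4a/d$.
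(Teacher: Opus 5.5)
Your first step is the same as the paper's. Under the isomorphism $\phi$ of Proposition~\ref{JQTE}, $w$ becomes $d\,u^2v^2$ on $\TE_{a,d}$, with $d=s$ and $a=-\delta+\sqrt{\delta^2-\epsilon}$. The paper then just cites \cite{FH2} for the fact that $dX^2Y^2$ has dADD-M formulas on $\TE_{a,d}$ with $e=4a/d$.

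Your primary plan for the second step cannot work as set up. $Q_2=(1/\sqrt a,0)$ has order $4$ ($2Q_2=\cT$), so there is no $2$-isogeny with kernel $\langle Q_2\rangle$. Your $w$ has degree $8$ on the curve (degree $4$ in $x$), so it could only be $x\circ\psi$, or its inverse, for a cyclic $4$-isogeny with kernel $\langle Q_2\rangle$. None of your candidates is such a map:
\begin{itemize}
\item The composite of $\phi$, \eqref{TEMon} and \eqref{MonMon} has $x$-coordinate $B_0/(\sqrt{A_0^2-4}\,x^2)$. This is a degree-$4$ function invariant only under $\langle\cT\rangle$, so it can never equal $c/(du^2v^2)$.
\item The map \eqref{JM2} has kernel $\langle\cT,\infty\rangle$, and it needs $\chi(\epsilon)=1$, which is not assumed.
\item Translating by $\cT$ or $\infty$ does not change the invariance group.
\end{itemize}
Nor can the constant be fixed afterwards. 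Substituting $X=\lambda X'$ destroys Montgomery form unless $\lambda^2=1$. Correspondingly, the $w_3w_0$ identity is not preserved under $w\mapsto\lambda w$ when $\lambda^2\neq1$.

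Your fallback is correct and is the efficient route. It proves by hand exactly the twisted Edwards fact the paper takes from \cite{FH2}. Set $p_i=au_i^2$ and $q_i=v_i^2$, so that $p_i+q_i=1+w_i$ and $p_iq_i=(a/d)w_i$.

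For doubling, $w_4=4w_1(q_1-p_1)^2/(1-w_1^2)^2$, and $(q_1-p_1)^2=(1+w_1)^2-(4a/d)w_1$.

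For addition,
\[
w_3w_0=\frac{d^2(u_1^2v_2^2-u_2^2v_1^2)^2(v_1^2v_2^2-a^2u_1^2u_2^2)^2}{(1-w_1w_2)^4}.
\]
Note that $p_1q_2-p_2q_1=a(u_1^2v_2^2-u_2^2v_1^2)$ and $q_1q_2-p_1p_2=v_1^2v_2^2-a^2u_1^2u_2^2$. Then
\[
(p_1q_2-p_2q_1)(q_1q_2-p_1p_2)=p_1q_1(p_2^2+q_2^2)-p_2q_2(p_1^2+q_1^2)=\tfrac{a}{d}\,(w_1-w_2)(1-w_1w_2),
\]
so $w_3w_0=(w_1-w_2)^2/(w_1w_2-1)^2$.

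These are precisely the formulas of Definition~\ref{def:MI} with $e=4a/d$, and Remark~\ref{rem2} concludes. Compared with the paper, this gives a self-contained proof with no dependence on \cite{FH2} and no extra square conditions. You should drop the isogeny detour.
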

\begin{proof}
%
%
	From Proposition \eqref{JQTE} the Jacobi quartic curve $\J_{\epsilon,\delta }$ is isomorphic to twisted Edwards curve $\TE_{t,s}$ under the transformations
	
	\begin{equation*}
			\phi(x,y)=(x,\dfrac{y}{1-sx^2}).
			\end{equation*}
	From \cite{FH2}, the twisted Edwards curve $\TE_{t,s}$ has dADD-M formulas with the function $v$ by  $v(X,Y)=dX^2Y^2$ and $e=4t/s$.
So from Lemma \eqref{ISOw}, $\omega=v\circ \phi$ is a $w$-function. So we have 
$\omega(x,y)=v( \phi(x,y))=\frac{sx^2y^2}{(1-sx^2)^2}=\dfrac{sx^2-\epsilon x^4}{1-sx^2}$ and $ e={4(-\delta +\sqrt{\delta ^2-\epsilon })}/{(-\delta -\sqrt{\delta ^2-\epsilon})}$ . \qed  
	
%
		\end{proof}
Again, we have the same projective and mixed differential formulas as \eqref{eq:M2},\eqref{eq:M4} and for the Jacobi quartics curves $\J_{\epsilon,\delta}$ with
 $ \chi(2\sqrt{\delta ^2-\epsilon }~(-\delta+\sqrt{\delta ^2-\epsilon }))=1$, by assuming 
$r^2=\dfrac{2\sqrt{\delta ^2-\epsilon }}{-\delta+\sqrt{\delta ^2-\epsilon }}$,
we have the same mixed differential addition and doubling formulas as \eqref{eq:M6} with cost of $3\M+6\s+3\D$.
%
%
%
%
%

\begin{example}
Let $p=2^{221}-3$ and $w(x,y)=\frac{sx^2-\epsilon x^4}{1-sx^2}$. If $\delta=
4$, $\epsilon=29083947421295781$\\
$7461024832352724759012289132574602923421109182330$,  then  Jacobi quartic curve $\J_{\epsilon,\delta}$ over  $\F_p$ is of order $8\ell$, where $\ell$ is the prime 
\begin{gather*} 
\ell=42124916667422874679167211073468\\
\qquad 1763486441235699193747057509395729.
\end{gather*}
The cost of the mixed differential addition and doubling formulas \eqref{eq:M6} is
$3\M+6\s+3\D$, where $2\D$ is the multiplication by the small constant $r=2690$ and one $\D$ is the multiplication by $1/r$.
\end{example}

\begin{table}[h!]
	\caption{}\label{tab4}
	\renewcommand{\arraystretch}{2.25}
\begin{tabular}{l|l|l|l|l|l|l}
		\hline
	$w(P)$&$w(\cP_0)$ & $w(\cP_1)$ & $w(\cP_2)$ & $w(\cP_3)$ & $w(\cP_4)$ & \text{invariant on subgroup}\\
		\hline
	$\varpi_4=\frac{sx^2-\epsilon x^4}{1-sx^2} $&$\frac{1}{\varpi_4}$ &$\frac{1}{\varpi_4}$&$\varpi_4$ &$\varpi_4'$ &$\varpi_4''$ &$ <Q_2>$ $\cong$  $\Z_4$\\
	\hline
   \multicolumn{7}{c}{\tiny where
$ \varpi_4'=-\dfrac{\sqrt{\epsilon}(\sqrt{\beta_1} x+\sqrt{\alpha_1} y)^2+s(\beta_1 x^2-\alpha_1 y^2)}{\sqrt{\epsilon}(\sqrt{\beta_1} x-\sqrt{\alpha_1} y)^2+s(\beta_1 x^2-\alpha_1 y^2)},$
 $\varpi_4''=- \dfrac{\sqrt{\epsilon}(\sqrt{\beta_2} x+\sqrt{\alpha_2} y)^2-s(\beta_2 x^2-\alpha_2 y^2)}{\sqrt{\epsilon}(\sqrt{\beta_2} x-\sqrt{\alpha_2} y)^2-s(\beta_2 x^2-\alpha_2 y^2)}.$
}
 \end{tabular}
\end{table}
\subsection{$w$-function invariant on $<\mathcal{T},Q_3>$}
Now, we introduce a $w$-function that is invariant for the coset of the point $Q_3$, i.e. 
$w(P)=w(Q)$ for all points $Q$ in $<Q_3>+P$.
\begin{proposition}\label{P:J8}
The Jacobi quartic curve $\J_{\epsilon,\delta}$ over the finite field $\F_q$ of odd characteristic with  $\chi(\epsilon)=1$ has dADD-M formulas with the function $w$ given by $$w(x,y)=2(\delta -\sqrt{\epsilon}) \,\dfrac{x^2(1-\sqrt{\epsilon}\,x^2)^2}{y^2(1+\sqrt{\epsilon}\,x^2)^2}$$ and
 $e=\dfrac{4(\delta+\sqrt{\epsilon})}{\delta-\sqrt{\epsilon}}$.
\end{proposition}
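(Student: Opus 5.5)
The plan is to follow the pattern of Propositions~\ref{P:J2}, \ref{P:J7} and \ref{P:J5}. We pull back a known Montgomery-like function along an isogeny and then apply Lemma~\ref{ISOw}. The natural isogeny here is the map \eqref{JTE2}. Since $\chi(\epsilon)=1$, we may write $\epsilon=a^2$ with $a=\sqrt{\epsilon}\in\F_q$, so $\J_{\epsilon,\delta}=\J_{a^2,\delta}$. Then \eqref{JTE2} gives an isogeny
\[
\phi:\J_{\epsilon,\delta}\to \TE_{a',d'},\qquad \phi(x,y)=\Big(\frac{-x}{y},\,\frac{1-\sqrt{\epsilon}x^2}{1+\sqrt{\epsilon}x^2}\Big),
\]
where $a'=2(\delta+\sqrt{\epsilon})$ and $d'=2(\delta-\sqrt{\epsilon})$. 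First I would check that $a'd'(a'-d')=16(\delta^2-\epsilon)\sqrt{\epsilon}\neq 0$, so the target really is a twisted Edwards curve. This follows from $\epsilon(\delta^2-\epsilon)\ne0$.

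Next, I would invoke the result from \cite{FH2} already used in the proof of Proposition~\ref{P:J5}. It says that $\TE_{a',d'}$ has dADD-M formulas with the function $v(U,V)=d'U^2V^2$ and parameter $e=4a'/d'$. Composing with $\phi$ gives
\[
v\circ\phi(x,y)=2(\delta-\sqrt{\epsilon})\,\frac{x^2}{y^2}\,\frac{(1-\sqrt{\epsilon}x^2)^2}{(1+\sqrt{\epsilon}x^2)^2},
\]
which is exactly the stated $w$. The parameter becomes $e=4a'/d'=4(\delta+\sqrt{\epsilon})/(\delta-\sqrt{\epsilon})$, as claimed. Unlike in Propositions~\ref{P:J2} and \ref{P:J7}, no inversion via Remark~\ref{rem2} is needed, because $v$ itself is Montgomery-like rather than inverted.

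Finally, Lemma~\ref{ISOw} transports the relations $w(2P)=\mathcal{F}(w(P))$ and $w(P_1+P_2)=\mathcal{G}(w(P_1),w(P_2),w(P_2-P_1))$ from $\TE_{a',d'}$ to $\J_{\epsilon,\delta}$. This uses $\phi(2P)=2\phi(P)$, $\phi(P_2-P_1)=\phi(P_2)-\phi(P_1)$, and the evenness $w(-P)=w(P)$. The transported $\mathcal{F}$ and $\mathcal{G}$ are precisely the Montgomery-like formulas of Definition~\ref{def:Mf} with the above $e$.

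The step I expect to need the most care is this transfer. Lemma~\ref{ISOw} is stated for rational identities, whereas Definition~\ref{def:Mf} demands the projective formulas for all $\F_q$-points with $W_0Z_0\neq0$. I would handle this by observing that the condition $W_0Z_0\ne0$ on $\J_{\epsilon,\delta}$ is the same as the condition on $\phi(P_2-P_1)$, because $w=v\circ\phi$. The projective Montgomery-like formulas on $\TE_{a',d'}$ hold for all such inputs by \cite{FH2}, and $\phi$ sends $\F_q$-points to $\F_q$-points, including the exceptional points where $y=0$ or $1+\sqrt{\epsilon}x^2=0$, which are handled by the projective closure of $\phi$. Hence the identities pull back point by point. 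A secondary check is that $\phi$ is genuinely an isogeny, i.e.\ $\phi(\cO)=\phi(0,1)=(0,1)$, the identity of $\TE_{a',d'}$; this is immediate.
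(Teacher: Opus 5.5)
Your proof is essentially the paper's: the same 2-isogeny $\phi(x,y)=(-x/y,\,(1-\sqrt{\epsilon}x^2)/(1+\sqrt{\epsilon}x^2))$ onto $\TE_{2(\delta+\sqrt{\epsilon}),2(\delta-\sqrt{\epsilon})}$, the same function $v=d'U^2V^2$ with $e=4a'/d'$ from \cite{FH2}, and Lemma~\ref{ISOw}. Your checks that the target curve is nonsingular, that $\phi(\cO)=\cO$, and that the projective formulas transfer are extra care the paper omits.

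One correction: $v$ is not Montgomery-like in the sense of Definition~\ref{def:Mf}. It is of the \emph{inverted} type of Definition~\ref{def:MI}. It vanishes at the identity $(0,1)$, and from the doubling law together with $a'U^2+V^2=1+v$ one gets
\[
v(2P)=\frac{4v_1((v_1+1)^2-ev_1)}{(v_1^2-1)^2},\qquad v_1=v(P),\quad e=4a'/d'.
\]
So $w=v\circ\phi$ satisfies Definition~\ref{def:MI}, not Definition~\ref{def:Mf}, and the dADD-M conclusion comes via Remark~\ref{rem2}.

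That is the real reason no inversion is needed. The stated $w$, like $\sqrt{\epsilon}x^2$ in Proposition~\ref{P:J2}, is the inverted-type function that Algorithm~\ref{dadM} and \eqref{eq:M2}--\eqref{eq:M6} actually use. In Propositions~\ref{P:J2} and \ref{P:J7} the inversion was needed precisely to turn a Montgomery-like pull-back into an inverted one.
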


\begin{proof}
From combination of isogenies (\ref{TEMon}) and (\ref{JM}), we can obtain the following isogeny between the Jacobi quartic curve $J_{\epsilon,\delta}$ and twisted Edwards curve $TE_{2(\delta+\sqrt{\epsilon}),2(\delta-\sqrt{\epsilon})}$ by the map
\begin{gather*}\label{JTE02}
\phi(x,y)=(\dfrac{-x}{y},\dfrac{1-\sqrt{\epsilon}x^2}{1+\sqrt{\epsilon} x^2}).
\end{gather*}

From \cite{FH2}, the twisted Edwards curve $TE_{2(\delta+\sqrt{\epsilon}),2(\delta-\sqrt{\epsilon})}$ has dADD-M formulas with the function $v$ by  $v(X,Y)=2(\delta-\sqrt{\epsilon})\,X^2Y^2$ and $e=4(\delta+\sqrt{\epsilon})/(\delta-\sqrt{\epsilon})$.
So from Lemma \eqref{ISOw}, $\omega=v\circ \phi$ is a $w$-function. So we have 
$\omega(x,y)=v( \phi(x,y))=2(\delta -\sqrt{\epsilon}) \,\dfrac{x^2(1-\sqrt{\epsilon}\,x^2)^2}{y^2(1+\sqrt{\epsilon}\,x^2)^2}$ and
 $e=\dfrac{4(\delta+\sqrt{\epsilon})}{\delta-\sqrt{\epsilon}}$ . \qed  
	\end {proof}

%
%
%

Again, we have the same mixed differential addition and doubling formulas as  \eqref{eq:M2} ,\eqref{eq:M4}. 
Also, for the Jacobi quartics curves $\J_{\epsilon,\delta}$ with$\chi(\epsilon)=\chi(\sqrt{\epsilon}(\delta+\sqrt{\epsilon}))=1$, by assuming
$r^2=\dfrac{4\sqrt{\epsilon}}{\delta+\sqrt{\epsilon}}$
, we have the same mixed projective differential addition and doubling formulas as \eqref{eq:M6} with cost of $3\M+6\s+3\D$.
Notice, $\delta=\frac{4-r^2}{r^2}\sqrt{\epsilon}$. So, if the parameter $r$ is chosen to be small then the cost of mixed differential formulas is $3\M+6\s+1\D$.
\begin{example}
Let $p=2^{255}-19$ and $\varpi_6=2(\delta -\sqrt{\epsilon}) \,\dfrac{x^2(1-\sqrt{\epsilon}\,x^2)^2}{y^2(1+\sqrt{\epsilon}\,x^2)^2}$. If $\epsilon=1$ and $\delta=
30944373591988444460751973135974026291107853553685711039716989364724196976793$, then the  Jacobi quartic curve $\J_{\epsilon,\delta}$ over  $\F_p$ is of order $4\ell$, where $\ell$ is the prime 
\begin{gather*}
\ell=14474011154664524427946373126085988\\
\qquad 481688154239458342155892795746573869440921
\end{gather*}
The cost of the mixed differential addition and doubling formulas \eqref{eq:M6} is
$3\M+6\s+3\D$, where $2\D$ is the multiplication by the small constant $r=2074$ and one $\D$ is the multiplication by $1/r$.
\end{example}
\begin{example}
Let $p=2^{255}-19$ and $\varpi_6=2(\delta -\sqrt{\epsilon}) \,\dfrac{x^2(1-\sqrt{\epsilon}\,x^2)^2}{y^2(1+\sqrt{\epsilon}\,x^2)^2}$. If $\epsilon=1$ and $\delta=
33574686343109322207842306118478095692616517608182583806876614746137736188301$, then the  Jacobi quartic curve $\J_{\epsilon,\delta}$ over  $\F_p$ is of order $8\ell$, where $\ell$ is the prime 
\begin{gather*}
\ell=7237005577332262213973186563042994\\
\qquad 240794681469389996117254795117961872363349 
\end{gather*}
The cost of the mixed differential addition and doubling formulas \eqref{eq:M6} is
$3\M+6\s+3\D$, where $2\D$ is the multiplication by the small constant $r=27593$ and one $\D$ is the multiplication by $1/r$.
\end{example}
\begin{table}[h!]
		\caption{}\label{tab5}
	\renewcommand{\arraystretch}{2.25}
\begin{equation*}
	\begin{array}{l|l|l|l|l|l|l}
		\hline
	w(P)~&~ w(\cP_0) & w(\cP_1) & w(\cP_2) & w(\cP_3) & w(\cP_4) & \text{invariant on subgroup}\\
		\hline
		\varpi_5=2(\delta -\sqrt{\epsilon}) \,\dfrac{x^2(1-\sqrt{\epsilon}\,x^2)^2}{y^2(1+\sqrt{\epsilon}\,x^2)^2} &\varpi_5 &\frac{1}{\varpi_5}&\frac{1}{\varpi_5}& \varpi_5 &\frac{1}{\varpi_5} & <Q_3,\mathcal{T}> \cong  \Z_2 \times \Z_4 \\
\hline
     \end{array}
		\end{equation*}
\end{table} 
\subsection{$w$-function invariant on $<Q_1,Q_2>$}
And finally, we introduce a $w$-function that is invariant for the coset of the points $Q_1$ and $Q_2$, i.e. 
$w(P)=w(Q)$ for all points $Q$ in $<Q_1,Q_2>+P$.
\begin{proposition}\label{P:J9}
The Jacobi quartic curve $\J_{\epsilon,\delta}$ over the finite field $\F_q$ of odd characteristic with  $\chi(\epsilon)=1$ has dADD-M formulas with the function $w$ given by $$w(x,y)=4 \sqrt{\epsilon} \,\dfrac{x^2y^2}{(1-\epsilon\,x^4)^2}$$ and
$e=\dfrac{2(\sqrt{\epsilon}-\delta)}{\sqrt{\epsilon}}$.
\end{proposition}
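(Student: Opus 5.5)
The plan is to realise the function $w$ as the pull-back of the function $\varpi_1=\sqrt{\epsilon}\,x^2$ of Proposition~\ref{P:J2} along the doubling endomorphism $[2]$ of $\J_{\epsilon,\delta}$. The key observation comes from the doubling formula \eqref{eq:DbEJ}: for an affine point $P=(x,y)$ with $1-\epsilon x^4\neq 0$ we have $x(2P)=2xy/(1-\epsilon x^4)$, hence
\begin{equation*}
\varpi_1(2P)=\sqrt{\epsilon}\,\frac{4x^2y^2}{(1-\epsilon x^4)^2}=w(P).
\end{equation*}
So $w=\varpi_1\circ[2]$ as rational functions on $\J_{\epsilon,\delta}$. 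Since $[2]$ is a nonconstant isogeny from $\J_{\epsilon,\delta}$ to itself (it sends $\cO$ to $\cO$), Lemma~\ref{ISOw} applies directly. It shows that $w$ is again a differential function, and in fact with exactly the same doubling and addition maps as $\varpi_1$.

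Concretely, I will argue as follows, in the spirit of the proof of Lemma~\ref{ISOw}. Evenness is immediate from the formula, since $w(-P)=w(P)$. For doubling we have $w(2P_1)=\varpi_1(2\cdot 2P_1)$, and applying the Montgomery-like doubling law for $\varpi_1$ with parameter $e=2(\sqrt{\epsilon}-\delta)/\sqrt{\epsilon}$ to the point $2P_1$ gives $(W_4:Z_4)$ as the required expression in $(W_1:Z_1)=\varpi_1(2P_1)$. For addition, write $2(P_1+P_2)=2P_1+2P_2$ and $2(P_2-P_1)=2P_2-2P_1$. The differential addition law for $\varpi_1$ then yields the stated formula for $(W_3:Z_3)$ in terms of $w_0,w_1,w_2$. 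Proposition~\ref{P:J2} actually states these formulas in the inverted Montgomery-like form of Definition~\ref{def:MI}, so I would transport them in that form and then invoke Remark~\ref{rem2}, exactly as there. Either way, the parameter $e$ is inherited unchanged, namely $e=2(\sqrt{\epsilon}-\delta)/\sqrt{\epsilon}$, as claimed.

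It remains to check the hypothesis $W_0Z_0\neq0$ of Definition~\ref{def:Mf}. The condition that $w(P_2-P_1)$ is neither $0$ nor $\infty$ is, by construction, the condition that $\varpi_1(2(P_2-P_1))$ is neither $0$ nor $\infty$. This is exactly the hypothesis needed when Proposition~\ref{P:J2} is applied to the pair $2P_1,2P_2$. Along the way I would also note the invariance property that motivates this subsection. Indeed $[2]$ maps the subgroup $\langle Q_1,Q_2\rangle$ into $\langle\mathcal{T}\rangle$, using $2Q_1=2Q_2=\mathcal{T}$ and $Q_1+Q_2=\overline{\infty}$ from Table~\ref{Tab:grp}. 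Since $\varpi_1$ is invariant under $\langle\mathcal{T}\rangle$ up to $\varpi_1\mapsto 1/\varpi_1$, the function $w$ is constant, up to this same inversion, on cosets of $\langle Q_1,Q_2\rangle$.

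I expect the main obstacle to be the points where the identity $w=\varpi_1\circ[2]$ is not a literal equality of affine expressions. These are the points with $1-\epsilon x^4=0$ (which double to $\infty,\overline{\infty}$) and the points at infinity. I would handle them by working projectively on $\mathbb{P}^1$, with $w(P)$ defined as $(4\sqrt{\epsilon}\,x^2y^2:(1-\epsilon x^4)^2)$, or via the extended coordinates \eqref{eq:TEH1}. Then I would check that both sides give $(1:0)$ or $(0:1)$ consistently, using Lemma~\ref{ISOw} as an identity of rational functions, which extends to all points by continuity of morphisms to $\mathbb{P}^1$.
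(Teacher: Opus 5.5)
Your argument is correct, but it takes a different route from the paper's proof of this proposition. The paper uses the $2$-isogeny $\varphi(x,y)=\bigl(2x/(1+ax^2),\,(1-ax^2)/y\bigr)$ from $\J_{a^2,a-2d}$ to $TE_{a,d}$, with $a=\sqrt{\epsilon}$ and $d=(\sqrt{\epsilon}-\delta)/2$. It imports from \cite{FH2} the fact that $v(X,Y)=aX^2/Y^2$ is a differential function on $TE_{a,d}$ with $e=4d/a$. It then pulls $v$ back by Lemma~\ref{ISOw}, obtaining $e=4d/a=2(\sqrt{\epsilon}-\delta)/\sqrt{\epsilon}$.

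You instead factor $w=\varpi_1\circ[2]$ and pull Proposition~\ref{P:J2} back along the doubling endomorphism. This is exactly the device the paper itself uses later for Proposition~\ref{P:J10}.

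Your route has several advantages:
\begin{itemize}
\item It is internal to the paper, resting only on Proposition~\ref{P:J2} and hence on the classical Montgomery formulas.
\item It explains why $e$ coincides with the parameter of Proposition~\ref{P:J2}.
\item It makes the invariance transparent, since $[2]^{-1}(\langle\mathcal{T}\rangle)=\langle Q_1,Q_2\rangle$.
\item It shows that this proposition yields no genuinely new arithmetic: the ladder in $w$ on $P$ is literally the $\varpi_1$-ladder on $2P$.
\end{itemize}
The paper's route uses a lower-degree isogeny (degree $2$ rather than $4$) and links $w$ to the twisted Edwards formulas. The price is reliance on the external results of \cite{FH2} and \cite{AG,H15}.

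One small correction to your side remark. Since $P+\mathcal{T}=(-x,-y)$, $\varpi_1$ is exactly $\mathcal{T}$-invariant; the inversion $\varpi_1\mapsto 1/\varpi_1$ comes from translation by $\infty$ or $\overline{\infty}$. Hence $w$ is exactly constant on cosets of $\langle Q_1,Q_2\rangle$, with no inversion. This does not affect your proof.
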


\begin{proof}

The  Jacobi quartic curve $J_{a^2,a-2d}$ and twisted Edwards curve $TE_{a,d}$ are isogenous \cite{AG,H15} by the map\\
\begin{gather*}\label{JTE0}
\varphi(x,y)=(\dfrac{2x}{1+ax^2},\dfrac{1-ax^2}{y}) .
\end{gather*}

From \cite{FH2}, twisted Edwards curve  $TE_{a,d}$ has dADD-M formulas with $v(X,Y)=a\, X^2/Y^2$ and $e=4d/a$. So from Lemma \eqref{ISOw},  Jacobi quartic curve $\J_{\epsilon,\delta}$ with $\chi(\epsilon)=1$, has dADD-M formulas  with 
 $$\omega(x,y)=v( \psi(x,y))=4 \sqrt{\epsilon} \,\dfrac{x^2y^2}{(1-\epsilon\,x^4)^2}$$ and
$e=\dfrac{2(\sqrt{\epsilon}-\delta)}{\sqrt{\epsilon}}$. \qed	
	\end {proof}
Similarly, we have the same mixed differential addition and doubling formulas as  \eqref{eq:M2} ,\eqref{eq:M4}. 
Also, for the Jacobi quartics curves $\J_{\epsilon,\delta}$ with $\chi(\epsilon)=\chi(\delta^2-\epsilon)=1$, by assuming
$r=\dfrac{\delta+\sqrt{\epsilon}}{\sqrt{\delta^2-\epsilon}}$
, we have the same mixed projective differential addition and doubling formulas as \eqref{eq:M6} with cost of $3\M+6\s+3\D$.
Notice, $\delta=\frac{r^2+1}{r^2-1}\sqrt{\epsilon}$. So, if the parameter $r$ is chosen to be small then the cost of mixed differential formulas is $3\M+6\s+1\D$. 
\begin{example}
Let $p=2^{221}-3$ and $\varpi_7(x,y)=4\sqrt{\epsilon}\, \frac{x^2y^2}{(1-\epsilon\,x^4)^2}$. If $\epsilon=1$ and $\delta= 1625485888214265059385149248953355680048842182580119166027190412745$, then $\chi(\delta^2-\epsilon)=1$  and the  Jacobi quartic curve $\J_{\epsilon,\delta}$ over  $\F_p$ is of order $4\ell$, where $\ell$ is the prime 
\begin{gather*} 
\ell=842498333348457493583344221469363\\
\qquad 786243439662766001461011538653297.
\end{gather*}
The cost of the mixed differential addition and doubling formulas \eqref{eq:M6} is
$3\M+6\s+3\D$, where $2\D$ is the multiplication by the small constant $r=3096$ and one $\D$ is the multiplication by $1/r$.
\end{example}

\begin{table}[h!]
		\caption{}\label{tab6}
	\renewcommand{\arraystretch}{2.25}
\begin{equation*}
	\begin{array}{l|l|l|l|l|l|l}
		\hline
	w(P)~&~ w(\cP_0) & w(\cP_1) & w(\cP_2) & w(\cP_3) & w(\cP_4)& \text{invariant on subgroup}\\
		\hline
	\varpi_6=4 \sqrt{\epsilon} \,\dfrac{x^2y^2}{(1-\epsilon\,x^4)^2} &\varpi_6 &\varpi_6 &\varpi_6 & \frac{1}{\varpi_6} &\frac{1}{\varpi_6} & <Q_1, Q_2> \cong \Z_2 \times \Z_4\\
	\hline
     \end{array}
		\end{equation*}
\end{table}
\subsection{$w$-function invariant on $\J[4](\F_q)$}
And finally, we introduce a $w$-function that is invariant for the coset of the full $4$-torsion group, i.e.,
$w(P)=w(Q)$ for all points $Q$ in $\J[4](\F_q)+P$.
\begin{proposition}\label{P:J10}
The Jacobi quartic curve $\J_{\epsilon,\delta}$ over the finite field $\F_q$ of odd characteristic with  $\chi(2\sqrt{\epsilon}(\sqrt{\epsilon}-\delta))=1$ has dADD-M formulas with the function $w$ given by $$w(x,y)=8\sqrt{2\sqrt{\epsilon}(\sqrt{\epsilon}-\delta)}\;\dfrac{x^2y^2(1-\epsilon x^4)^2}{\left( 4\sqrt{\epsilon}\, x^2y^2+(1-\epsilon x^4)^2 \right)^2}$$, 
and
 $e=(2\sqrt{\epsilon}+\sqrt{2\sqrt{\epsilon}(\sqrt{\epsilon}-\delta)}~)^2/(2\sqrt{\epsilon}\sqrt{2\sqrt{\epsilon}(\sqrt{\epsilon}-\delta)})$.
\end{proposition}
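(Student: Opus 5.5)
The plan is to follow the pattern of the proofs of Proposition~\ref{P:J7} and Proposition~\ref{P:J9}: first pass to a simpler model via an isogeny, then pull back an already known Montgomery-like function using Lemma~\ref{ISOw}. The new function should be the composition of the function $\varpi_3$ of Proposition~\ref{P:J7} with a $2$-isogeny of $\J_{\epsilon,\delta}$ into a Jacobi quartic with the same parameters $\epsilon,\delta$. Write $g(x,y)=4\sqrt{\epsilon}\,x^2y^2/(1-\epsilon x^4)^2$ for the function of Proposition~\ref{P:J9}. The stated $w$ can then be rewritten as
\[
w=8\sqrt{2\sqrt{\epsilon}(\sqrt{\epsilon}-\delta)}\,\frac{X^2}{(1+\sqrt{\epsilon}X^2)^2},\qquad X^2=\frac{x^2y^2}{(1-\epsilon x^4)^2}.
\]
Since $\sqrt{\epsilon}X^2=g/4$, this says that $w=\varpi_3(X,\cdot)$ up to the constant: one evaluates $2\sqrt{2\sqrt{\epsilon}(\sqrt{\epsilon}-\delta)}\,X^2/(1+\sqrt{\epsilon}X^2)^2$ and checks that it agrees with the displayed formula. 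I would carry out this bookkeeping of constants first.

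The key step is to exhibit an isogeny $\theta:\J_{\epsilon,\delta}\to\J_{\epsilon,\delta}$ with $x(\theta(x,y))^2=x^2y^2/(1-\epsilon x^4)^2$. Up to the factor $4$, the doubling formula \eqref{eq:DbEJ} gives $x(2P)=2xy/(1-\epsilon x^4)$. Hence $x(2P)^2=4X^2$, so $X^2=x(2P)^2/4$.

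This suggests an even simpler route, which I would take instead. Put $\omega=\varpi_3\circ[2]$, where $[2]$ is the doubling endomorphism, which is an isogeny. Then
\[
\omega(P)=2\sqrt{2\sqrt{\epsilon}(\sqrt{\epsilon}-\delta)}\,\frac{x(2P)^2}{(1+\sqrt{\epsilon}x(2P)^2)^2}.
\]
Substituting $x(2P)^2=4X^2$ gives $\omega(P)=8\sqrt{\cdot}\,X^2/(1+4\sqrt{\epsilon}X^2)^2$. Multiplying numerator and denominator by $(1-\epsilon x^4)^4$ turns this into
\[
\frac{8\sqrt{\cdot}\;x^2y^2(1-\epsilon x^4)^2}{\left((1-\epsilon x^4)^2+4\sqrt{\epsilon}x^2y^2\right)^2},
\]
which is exactly the stated $w$. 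So $w=\varpi_3\circ[2]$.

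By Proposition~\ref{P:J7}, $\varpi_3$ is a Montgomery-like function with the given $e$, and it satisfies the doubling and differential-addition identities. By Lemma~\ref{ISOw}, these identities transfer unchanged to $\varpi_3\circ[2]$: $[2]$ commutes with doubling and with $P_1+P_2$ and $P_2-P_1$, so the same rational functions with the same $e$ apply. The conclusion then follows, with the same $e$ as in Proposition~\ref{P:J7}.

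The main obstacle I expect is the exceptional-point side condition in Definition~\ref{def:Mf}. That condition requires $W_0Z_0\neq0$ for $w(P_2-P_1)$, whereas Proposition~\ref{P:J7} only guarantees the identities when $\varpi_3(2(P_2-P_1))$ is finite and nonzero. These coincide, since $w_0=\varpi_3(2P_0)$. One also has to note that the identity of Definition~\ref{def:Mf} is an identity of rational functions on $E(\F)$, so composing with $[2]$ preserves it for all inputs. With that remark the proof is a short composition argument, and the only real computation is the algebraic simplification in the second paragraph.
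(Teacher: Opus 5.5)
Your proposal is correct and is the paper's own argument: the paper also uses the doubling map $\phi=[2]$ as an isogeny of $\J_{\epsilon,\delta}$ to itself, takes $w=v\circ\phi$ with $v=\varpi_3$ from Proposition~\ref{P:J7}, applies Lemma~\ref{ISOw}, and keeps the same $e$; your third-paragraph simplification is the computation the paper leaves implicit. Two slips in the draft you abandon do not affect the route you actually take: the denominator $(1+\sqrt{\epsilon}X^2)^2$ in your first paragraph should be $(1+4\sqrt{\epsilon}X^2)^2$ (as you later get), and $[2]$ has degree $4$, so it is not a $2$-isogeny.
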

\begin{proof}
We know that doubling formula \eqref{eq:DbEJ} is an isogeny for Jacobi quartic curve $\J_{\epsilon,\delta}$   to itself, as follow:

$$\phi(x,y)= \left(\dfrac{2xy}  {1-\epsilon x^4},\,\dfrac{(y^2+2\delta x^2)(1+\epsilon x^4)+4\epsilon x^4}  {(1-\epsilon x^4)^2} \right).$$

But from Proposition \eqref{P:J7} the Jacobi quartic curve $\J_{\epsilon,\delta}$ over $\F_q$ has dADD-M formulas with the function $w$ given by 
$v(X,Y)=2\sqrt{2\sqrt{\epsilon}(\sqrt{\epsilon}-\delta)}\;(\dfrac{X}{1+\sqrt{\epsilon} X^2})^2$ and
 $e=(2\sqrt{\epsilon}+\sqrt{2\sqrt{\epsilon}(\sqrt{\epsilon}-\delta)}~)^2/(2\sqrt{\epsilon}\sqrt{2\sqrt{\epsilon}(\sqrt{\epsilon}-\delta)})$.
 So from Lemma \eqref{ISOw}, $\omega=v\circ \phi$ is a $w$-function. So 
$\omega(x,y)=v( \phi(x,y))=8\sqrt{2\sqrt{\epsilon}(\sqrt{\epsilon}-\delta)}\;\dfrac{x^2y^2(1-\epsilon x^4)^2}{\left( 4\sqrt{\epsilon}\, x^2y^2+(1-\epsilon x^4)^2 \right)^2}. $ \qed
\end{proof}

Similarly, we have the same mixed differential addition and doubling formulas as  \eqref{eq:M2} ,\eqref{eq:M4}. 
Also, for the Jacobi quartics curves $\J_{a^2,\delta}$ with $\chi(2\sqrt{\epsilon}(\sqrt{\epsilon}-\delta))=1$, by assuming
$r=\dfrac{2\sqrt{\epsilon}-\sqrt{2\sqrt{\epsilon}(\sqrt{\epsilon}-\delta)}}{2\sqrt{\epsilon}+\sqrt{2\sqrt{\epsilon}(\sqrt{\epsilon}-\delta)}}$
, we have the same mixed projective differential addition and doubling formulas as \eqref{eq:M6} with cost of $3\M+6\s+3\D$. So, if the parameter $r$ is chosen to be small then the cost of mixed differential formulas is $3\M+6\s+1\D$. 
\begin{example}
Let $p=2^{221}-3$ and $w(x,y)=8\sqrt{2\sqrt{\epsilon}(\sqrt{\epsilon}-\delta)}\;\dfrac{x^2y^2(1-\epsilon x^4)^2}{\left( 4\sqrt{\epsilon}\, x^2y^2+(1-\epsilon x^4)^2 \right)^2}$. If $\epsilon=1$ and $\delta=
4055600956685933415374204120978894115631100479487678178603$ $8032234854669273739$, then the  Jacobi quartic curve $\J_{\epsilon,\delta}$ over  $\F_p$ is of order $8\ell$, where $\ell$ is the prime 
\begin{gather*} 
\ell=723700557733226221397318656304299\\
\qquad 4240823162899814764622947667093616846653001.
\end{gather*}
The cost of the mixed differential addition and doubling formulas \eqref{eq:M6} is
$3\M+6\s+3\D$, where $2\D$ is the multiplication by the small constant $r=14295$ and one $\D$ is the multiplication by $1/r$.
\end{example}
\begin{scriptsize}
\begin{table}[h!]
		\caption{}\label{tab7}
	\renewcommand{\arraystretch}{2.25}
\begin{equation*}
	\begin{array}{l|l|l|l|l|l|l}
		\hline
	w(P)~&~ w(\cP_0) & w(\cP_1) & w(\cP_2) & w(\cP_3) & w(\cP_4 )& \text{invariant on subgroup}\\
		\hline
	\varpi_7=\dfrac{8\sqrt{2\sqrt{\epsilon}(\sqrt{\epsilon}-\delta)}\;x^2y^2(1-\epsilon x^4)^2}{\left( 4\sqrt{\epsilon}\, x^2y^2+(1-\epsilon x^4)^2 \right)^2} &\varpi_7 & \varpi_7 &\varpi_7 &\varpi_7 &\varpi_7 & \J[4] \cong  \Z_4 \times \Z_4\\
	\hline
     \end{array}
		\end{equation*}
	\end{table}
\end{scriptsize}
\begin{theorem}
	Let $E$ be an elliptic curve over $\F_q$ of odd characteristic. If $E(\F_q)$ has a subgroup of order 4, then $E$ has a dADD-M formulas. 
\end{theorem}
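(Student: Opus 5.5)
A subgroup of order 4 in $E(\F_q)$ is isomorphic to either $\Z_4$ or $\Z_2\times\Z_2$, and I will split the proof along these two cases. In each case the plan is to move $E$, up to $\F_q$-isomorphism, to a Jacobi quartic $\J_{\epsilon,\delta}$ whose parameters satisfy the hypothesis of one of the propositions already proved. That proposition then supplies a Montgomery-like (or inverted Montgomery-like) function $w$. Since Definition~\ref{def:Mf} is clearly preserved under $\F_q$-isomorphisms (compose $w$ with the isomorphism, as in Lemma~\ref{ISOw}), $E$ itself then has dADD-M formulas. Throughout, $E$ has an $\F_q$-rational point of order 2, so by the remark of Billet and Joye it is isomorphic to some $\J_{\epsilon,\delta}$ with $\epsilon(\delta^2-\epsilon)\ne0$.

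\emph{Case $\Z_2\times\Z_2$.} Here $E$ has full rational 2-torsion. By Lemma~\ref{LemJ1}, the corresponding Jacobi model can be chosen with $\chi(\epsilon)=1$: the subfamily with $\chi(\epsilon)=1$ is exactly the family of curves with full 2-torsion, as stated just before Lemma~\ref{LemJ1}. Proposition~\ref{P:J2} then gives dADD-M formulas with $w=\sqrt{\epsilon}x^2$ and $e=2(\sqrt\epsilon-\delta)/\sqrt\epsilon$. To justify this choice concretely, write $E$ as $y^2=x(x-a)(x-b)$. Translating different roots to $0$ yields Weierstrass forms $(x^2-4\epsilon)(x+2\delta)$ with $4\epsilon$ equal to a product of two root differences, and an appropriate choice of which root to move makes $4\epsilon$ a square.

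\emph{Case $\Z_4$.} Let $P$ be a rational point of order 4 and $\cT'=2P$. I will place $\cT'$ at $(0,-1)$ in the Jacobi model, i.e.\ take the Jacobi model built from the 2-torsion point $2P$. By the computation of 4-torsion preceding Table~\ref{Tab:grp}, the halves of $(0,-1)$ are the points $(\pm\sqrt{r_i},0)$, where $r_{1},r_2$ are the roots of $\epsilon u^2+2\delta u+1$. A rational half therefore forces this quadratic to have a rational root, that is, $\chi(\delta^2-\epsilon)=1$; this is the same argument used in the proof of the lemma for $\chi(\epsilon)=-1$. Proposition~\ref{P:J4} (or Proposition~\ref{P:J5}) then applies and yields dADD-M formulas, with $w=2\sqrt{\delta^2-\epsilon}\,x^2/y^2$.

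The main obstacle is the normalization step in each case: showing that the Jacobi model can be chosen so that the relevant 2-torsion point is the one sent to $(0,-1)$, and that the square-class conditions hold for that specific model rather than for some arbitrary one. For the $\Z_4$ case I would attempt this through the explicit map $\zeta^{-1}$ from the Weierstrass form $y^2=x(x^2+Ax+B)$ with the chosen 2-torsion point at the origin. I would then check directly that a rational point $Q$ with $2Q=(0,0)$ exists if and only if $B$ is a square, since $x(Q)^2=B$. In the Jacobi parameters this condition translates into $\delta^2-\epsilon$ being a square. For the $\Z_2\times\Z_2$ case the corresponding check is that $A^2-4B$ is a square, which translates into $\epsilon$ being a square. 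Once these normalizations are in place, the conclusion follows from the cited propositions.
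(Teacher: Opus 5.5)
Your proof is correct and follows the paper's route. You pass to a Billet--Joye Jacobi model, get $\chi(\epsilon)=1$ from full $2$-torsion and apply Proposition~\ref{P:J2}, get $\chi(\delta^2-\epsilon)=1$ from a rational point of order $4$ and apply Proposition~\ref{P:J4}, and pull $w$ back along the isomorphism.

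The one real difference is your normalization $2P\mapsto(0,-1)$ in the $\Z_4$ case, and it is worth keeping. The paper takes an arbitrary Jacobi model and asserts that a rational point of order $4$ forces $\chi(\delta^2-\epsilon)=1$. That is false when the point halves a point at infinity rather than $\cT$. For example, on $\J_{1,0}$ over $\F_7$ the point $(1,3)$ has $2(1,3)$ equal to a point at infinity, so it has order $4$, yet $\delta^2-\epsilon=-1$ is a non-square in $\F_7$ (compare Lemma~\ref{LemJ4}). The paper's argument survives only because in that situation $\chi(\epsilon)=1$, so Proposition~\ref{P:J2} covers it anyway. Your choice of model makes the step true as stated. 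An equally short fix: if $\chi(\epsilon)=-1$, then $\cT$ is the only rational $2$-torsion point, so $2P=\cT$ automatically.

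Two small inaccuracies, neither of which affects the argument:

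\begin{itemize}
\item On $y^2=x(x^2+Ax+B)$, a rational $Q$ with $2Q=(0,0)$ needs more than $B$ being a square. Since $y(Q)^2=x(Q)^2\,(A+2x(Q))$, one of $A\pm 2\sqrt{B}$ must also be a square. So only the ``only if'' half of your equivalence holds, but that is the half you use.
\item In the $\Z_2\times\Z_2$ case no choice of root is needed. Full rational $2$-torsion is an isomorphism invariant, so by Lemma~\ref{LemJ1} every Jacobi model of $E$ has $\chi(\epsilon)=1$. Also, the quantity that is a product of two root differences is $B=4(\delta^2-\epsilon)$, not $4\epsilon$; instead $16\epsilon=A^2-4B$ is the square of a root difference.
\end{itemize}
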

\begin{proof}
Let the elliptic curve $E$ over $\F_q$ has a subgroup of order 4, so it has either full 2-torsion $\F_q$-rational points or a $\F_q$-rational point of order 4.
We recall \cite{BJ} that any elliptic curve with a point of order 2 is $\F_q$-isomorphic to a Jacobi quartic curve $\J_{\epsilon,\delta}$. Then, there exist a $\F_q$-isomorphism 
$\phi :  E(\F_q) \mapsto \J_{\epsilon,\delta}(\F_q)$. If $E(\F_q)$ has full 2-torsion points then $\J_{\epsilon,\delta}(\F_q)$ does too. So, we have $\chi(\epsilon)=1$ (see \cite{BJ}). 
Also, if $E(\F_q)$ has a point of order 4 then $\J_{\epsilon,\delta}(\F_q)$ does the same. So, we have $\chi(\delta^2-\epsilon)=1$. 
From Propositions \ref{P:J2} and \ref{P:J4}, $\J_{\epsilon,\delta}$ has dADD-M formulas by some function $w$ on $\J_{\epsilon,\delta}$ if $\chi(\epsilon)=1$ and $\chi(\delta^2-\epsilon)=1$, respectively. Thus, $E$ has a dADD-M formulas by the function $w \circ \phi$ which complete the proof. \qed
\end{proof}
\section{Comparison with previous works } \label{sec-Compar}
Gaudry and Lubicz \cite{GL9} give a very efficient mixed differential addition Montgomery like formulas for Kummer line with the cost of $4\M+6\s+3\D$, and $3\M+6\s+3\D$ if the base point is affine. The Kummer line is linked to the Legendre curve $\E_\lambda: Y^2=X(X-1)(X-\lambda)$, where $\lambda=a^4/(a^4-b^4)$ and $(a:b)$ defines the Kummer line. The group order of the corresponding curve $\E_\lambda$ over $\F_q$ is divisible by 4, and in particular it has full 2-torsion subgroup. Castryck, Galbraith and Farashahi \cite{CGF} give the $y$-coordinate differential addition Montgomery-like formulas for Edwards curves with the cost of $6\M+4\s+1\D$, and $5\M+4\s+1\D$ if the base point is affine.
Gu et.al. give the $x^2$-coordinate differential addition Montgomery-like formulas for Jacobi quartic $\J_{1,a}$ with the cost of $5\M+4\s+1\D$ if the base point is affine. Notice, the family of Jacobi curves $\J_{1,a}$ is the same (up to $\F_q$-isomorphism) as the family of Legendre curves $\E_\lambda$ with $\lambda \in \F_q$.     
Farashahi and Hosseini  \cite{FH2} provides a new formulas for complete twisted Edwards curves $\E_{\TE,a,d}$ with the cost of $3\M+6\s+3\D$. They also provide formulas for twisted Edwards and Montgomery curve with full 2-torsion points with the cost of $3\M+6\s+3\D$.
Bernstein and Lange \cite{BERNLANG} provides a Kummer-line formulas for Montgomery curves with full 2-torsion subgroup with the cost of $3\M+6\s+3\D$. 

The family of Jacobi quartic curves properly includes (up to $\F_q$-isomorphism) all the families of Legendre, Edwards, twisted Edwards and Montgomery curves. Notice, the family of Montgomery and twisted Edwards curves do not cover all elliptic curves with full 2-torsion subgroup. Our proposed formulas for Jacobi curves are improved in terms of efficiency and speed. The Montgomery-like formulas are provided for complete Jacobi curves with the cost of $3\M+7\s+1\D$ and $3\M+6\s+3\D$ which gives further speed up if the parameters are chosen to be small. 

Several points should be considered for an efficient implementation of the point multiplication on a given curve over a suitable finite field. In fact, one of them is reducing the number of basic finite field operations. In this paper, the Montgomery-like formulas with the cost of $5\M+4\s+1\D$ are provided for all elliptic curves over finite fields with group order divisible by 4. 

In Table \ref{tab}, we compare our new differential addition formulas for family of Jacobi quartic  curves  with the known formulas for other forms of elliptic curves.

\begin{table}[h!]
	\caption{ \scriptsize Cost of differential addition and doubling for families
		of elliptic curves in odd characteristic}\label{tab}
	\centering
	\begin{tabular}{|l|l|l|}
		\hline
		 Model~~ & Projective differential~~ & Mixed differential~~ \\
		\hline\hline
		Montgomery~~ \cite{Mo}~~ &
		$6\M+4\s+1\D$~~&$5\M+4\s+1\D$~~\\
	Montgomery~~ \cite{FH2}~~ &
			$4\M+7\s+1\D$~~&$3\M+7\s+1\D$~~\\
	Montgomery~~ \cite{FH2,BERNLANG}~~ &
			$4\M+6\s+3\D$~~&$3\M+6\s+3\D$~~\\
		\hline
		Kummer curve~~\cite{GL9}~~&$4\M+6\s+3\D$~~&$3\M+6\s+3\D$ \\
		\hline
		Twisted Edwards curve &&\\
		\cite{FH2}~~ & $6\M+5\s+1\D$~~&$5\M+4\s+1\D$~~\\
	\cite{FH2}~~ & $4\M+7\s+1\D$~~&$3\M+7\s+1\D$~~\\
		\cite{FH2}~~ & $4\M+6\s+3\D$~~&$3\M+6\s+3\D$~~\\
		\hline
		Jacobi quartic curve&&\\
		$\epsilon=a^2$ \cite{GGX12}& $6\M+4\s+5\D$~~&$5\M+4\s+5\D$~~\\
		$\epsilon=1$ \cite{GGX12}& $6\M+4\s+1\D$~~&$5\M+4\s+1\D$~~\\
   Proposition: \eqref{P:J1},\eqref{P:J3}&$6\M+6\s+2\D$~~&$5\M+6\s+2\D$~\\
    	Proposition: \eqref{P:J2}, \eqref{P:J4}, \eqref{P:J7}, \eqref{P:J5},\eqref{P:J8},\eqref{P:J9},\eqref{P:J10}&$6\M+4\s+1\D$~~&$5\M+4\s+1\D$~\\
	    &$4\M+7\s+1\D$~~&$3\M+7\s+1\D$\\

	    &$4\M+6\s+3\D$~~&$3\M+6\s+3\D$\\
		\hline
	\end{tabular}
\end{table}

\end{document}